\pgfplotsset{
    compat=newest
}
\newcommand{\CC}{\mathbb C}
\newcommand{\DD}{\mathbb{D}}
\newcommand{\RR}{\mathbb R}
\newcommand{\PP}{\mathbb P}
\newcommand{\TT}{\mathbb T}
\newcommand{\ZZ}{\mathbb Z}
\newcommand{\mcA}{\mathcal{A}}
\newcommand{\mcB}{\mathcal B}
\newcommand{\mcC}{\mathcal C}
\newcommand{\mcI}{\mathcal{I}}
\newcommand{\mcL}{\mathcal{L}}
\newcommand{\mcS}{\mathcal{S}}
\newcommand{\mcT}{\mathcal{T}}
\newcommand{\mcY}{\mathcal{Y}}
\newcommand{\mcV}{\mathcal{V}}
\newcommand{\gtc}{\mathfrak{c}}
\newcommand{\gtf}{\mathfrak{f}}
\newcommand{\gtg}{\mathfrak{g}}
\newcommand{\gth}{\mathfrak{h}}
\newcommand{\gtm}{\mathfrak{m}}
\newcommand{\BM}{\mathscr{B}}
\newcommand{\PM}{\mathscr{P}}
\newcommand{\Comb}{\mathop {\rm Cmb}\nolimits}
\newcommand{\SL}{\mathop {\rm SL}\nolimits}
\newcommand{\GL}{\mathop {\rm GL}\nolimits}
\newcommand{\Sing}{\mathop {\rm Sing}\nolimits}
\newcommand{\Str}{\mathop {\rm Str}\nolimits}
\newcommand{\id}{\mathop {{\rm id}}\nolimits}
\newcommand{\parti}{\mathfrak{p}}
\newcommand{\Parti}{\mathfrak{P}}
\newcommand{\sm}{\mathrm{sm}}
\newcommand{\pr}{\mathrm{pr}}
\newcommand{\Irr}{\mathrm{Irr}}
\newcommand{\modi}{\mathrm{m}}
\newcommand{\fin}{\mathrm{Fin}}
\newcommand{\plumb}{\mathrm{pg}}
\newcommand{\dplumb}{\mathrm{dpg}}
\newcommand{\norm}{\mathrm{nml}}
\newcommand{\Conn}{\mathrm{Conn}}
\newcommand{\AH}{\mcA}
\newcommand{\Ver}{\overline{\mcA}}
\newcommand{\arrowhead}{\begin{tikzpicture} \draw[-{Stealth[length=8pt]}] (0,0) -- (0.1,0);  \end{tikzpicture}}
\newcommand{\sign}{\mathrm{sgn}}
\newcommand{\W}{\mathrm{W}}
\renewcommand{\red}{}
\theoremstyle{oupplain}
\newtheorem{thm}{Theorem}[section] 
\newtheorem{lem}[thm]{Lemma}     
\newtheorem{cor}[thm]{Corollary}
\newtheorem{fact}[thm]{Fact}
\theoremstyle{oupdefinition}
\newtheorem{defin}[thm]{Definition}
\theoremstyle{oupremark}
\newtheorem{rem}[thm]{Remark}
\numberwithin{equation}{section}
\begin{document}

\title[A note on combinatorial type]{A note on combinatorial type and splitting invariants of plane curves}
\author{Taketo Shirane}
\address{Department of Mathematical Sciences, Faculty of Science and Technology, Tokushima University, 2-1 Minamijosanjima, Tokushima, 770-8506, Japan}
\email{shirane@tokushima-u.ac.jp}
\keywords{embedded topology, Zariski pair,  splitting invariant, $G$-combinatorics, graph manifold, plumbing graph, modified plumbing graph}
\subjclass[2010]{14E20, 14F45, 14H50, 57M15}
\maketitle

\begin{abstract}
Splitting invariants describe how a plane curve ``splits" by the pull-back under a Galois cover over the projective plane whose branch locus contains no component of the plane curve. 
They enable us to distinguish the embedded topology of several plane curves with the same fundamental group of the complements. 
In this note, we introduce a generalization of splitting invariants, called the \textit{$G$-combinatorial type}, for plane curves by using the modified plumbing graph defined by Hironaka. 
We prove the invariance of the $G$-combinatorial type under certain homeomorphisms based on the arguments of graph manifolds by Waldhausen and plumbing graphs by Neumann. 
Furthermore, we distinguish the embedded topology of \textit{quasi-triangular curves} by the $G$-combinatorial type, which are generalization of triangular curves studied by Artal, Cogolludo and {\red Mart\'in \cite{artal_cogolludo_martin_2019}}.  
\end{abstract}

\section{Introduction}

In this note, we call a reduced (possibly reducible) curve $\mcC$ on the complex projective plane $\PP^2:=\CC\PP^2$ a \textit{plane curve}, and call the homeomorphism class of the pair $(\PP^2,\mcC)$ the \textit{embedded topology} of $\mcC\subset\PP^2$. 
In the study of the embedded topology of plane curves, one of the main objects is {\red their classification}. 
For this, it is natural to consider the data consisting of the number of irreducible components of a plane curve $\mcC$, the degree {\red of each component and an embedded resolution of its singularities}, which is called the \textit{combinatorial type} (or \textit{combinatorics} for short) of $\mcC$ (see \cite[Definition 2.1]{abst2023} or Section~\ref{sec:comb} for details). 
It was mentioned in \cite{act2008} that Fact~\ref{fact:comb} below can be proved by the arguments in \cite{waldhausen_1967} and \cite{neumann1981}. 
However, there is no proof in the literature. 
In order to make clear what we obtain from \cite{waldhausen_1967} and \cite{neumann1981}, we give a precise proof of Fact~\ref{fact:comb} in this note (Section~\ref{sec:comb}). 

\begin{fact}\label{fact:comb}
Let $\mcC_1, \mcC_2\subset\PP^2$ be two plane curves. 
The {\red following} hold:
\begin{enumerate}
	\item $\mcC_1$ and $\mcC_2$ have the same combinatorics if and only if $\mcC_1$ and $\mcC_2$ have the same embedded topology in regular neighborhoods, that is, there is a homeomorphism $h\colon U(\mcC_1)\rightarrow U(\mcC_2)$ with $h(\mcC_1)=\mcC_2$ for some regular neighborhoods $U(\mcC_i)$ of $\mcC_i$ in $\PP^2$. 
	\item If $\mcC_1$ and $\mcC_2$ have the same embedded topology in $\PP^2$, then they have the same combinatorics. 
\end{enumerate}
\end{fact}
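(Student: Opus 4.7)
The plan is to pass through the minimal embedded resolution and read both neighborhoods as plumbed $4$-manifolds, reducing the equivalence to the uniqueness of plumbing graphs of graph manifolds. For each curve, let $\pi_i\colon X_i\to\PP^2$ be the minimal embedded resolution of $\mcC_i$, write $\widetilde{\mcC}_i$ for the strict transform and $E_i$ for the exceptional divisor, and let $\Gamma_i$ be the dual graph of $\widetilde{\mcC}_i\cup E_i$, decorated with genera and self-intersection numbers and with a marking distinguishing the vertices of $\widetilde{\mcC}_i$ from those of $E_i$. A regular neighborhood $U(\mcC_i)$ deformation retracts onto $\mcC_i$, but after pulling back through $\pi_i$ it is canonically identified with a regular neighborhood of $\widetilde{\mcC}_i\cup E_i$ in $X_i$; the latter is the plumbed $4$-manifold with plumbing graph $\Gamma_i$, so the pair $(U(\mcC_i),\mcC_i)$ is encoded by $(\Gamma_i,\widetilde{\mcC}_i)$.

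For $(\mathrm{ii})\Rightarrow(\mathrm{i})$, the combinatorics of $\mcC_i$ determines, via the topological type of each singular point (which dictates its embedded resolution tree by Brieskorn--Knörrer III, Thm.~21) and the incidence data of the components, the marked decorated graph $\Gamma_i$ together with the subset of vertices corresponding to $\widetilde{\mcC}_i$. Hence equal combinatorics gives $\Gamma_1=\Gamma_2$ as marked graphs, and a plumbing isomorphism of decorated graphs yields a homeomorphism of plumbed neighborhoods sending $\widetilde{\mcC}_1\cup E_1$ to $\widetilde{\mcC}_2\cup E_2$ respecting the marking; pushing down by the resolutions $\pi_i$ (which contract exactly the $E_i$-part) produces the desired homeomorphism $h\colon U(\mcC_1)\to U(\mcC_2)$ with $h(\mcC_1)=\mcC_2$.

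For $(\mathrm{i})\Rightarrow(\mathrm{ii})$, suppose such an $h$ exists. Then $h$ restricts to a homeomorphism of the boundary graph manifolds $\partial U(\mcC_1)\cong \partial U(\mcC_2)$, and in fact to a homeomorphism of the whole plumbed $4$-manifolds preserving the embedded divisor $\mcC_i$. By the Waldhausen rigidity of graph manifolds together with Neumann's plumbing calculus, the minimal plumbing graph of such a manifold is a homeomorphism invariant; moreover, the curve $\mcC_i$ together with the contraction data recovers the distinguished vertices $\widetilde{\mcC}_i$ inside $\Gamma_i$ (as those whose images in $U(\mcC_i)$ are not contracted by $\pi_i$). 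Thus $h$ induces an isomorphism of the marked decorated graphs $\Gamma_1\cong\Gamma_2$, from which one reads back the number and genera of components, the topological type of each singular point via its resolution subgraph, the incidence of branches, and the degrees of components through the self-intersections $\widetilde{\mcC}_{i,j}\cdot\widetilde{\mcC}_{i,j}=(\deg\mcC_{i,j})^2-(\text{corrections from blow-ups})$; this is precisely the combinatorics of $\mcC_i$.

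The main obstacle is the second direction and specifically the appeal to Neumann's normal form: one must check that the markings distinguishing curve vertices from exceptional vertices are respected by the plumbing moves used to pass between possibly different plumbing presentations, and that the self-intersections and genera extracted from the minimal normal form are the same numerical data that define the combinatorics. Once this bookkeeping on marked plumbing graphs is in place, Fact~\ref{fact:comb} follows by combining the two implications; the last assertion is then immediate since any global homeomorphism of pairs $(\PP^2,\mcC_i)$ restricts to a homeomorphism of regular neighborhoods.
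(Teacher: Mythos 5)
Your direction (ii) $\Rightarrow$ (i) matches the paper's argument: the combinatorics determines the plumbed neighborhood of $\mcC'$ in the resolution, which pushes down to a distinguished regular neighborhood $\mcT(\mcC)$ of $\mcC$. The gap is in (i) $\Rightarrow$ (ii), where you invoke ``Waldhausen rigidity together with Neumann's plumbing calculus'' to conclude that $h$ induces an isomorphism of the marked resolution dual graphs. Two things are missing there. First, Neumann's uniqueness of the normal form is a statement about \emph{orientation-preserving} homeomorphisms of graph manifolds; an orientation-reversing homeomorphism produces the different normal form of Theorem~\ref{thm:ori_rev}, so you would first have to show that $h$ restricted to the boundary preserves orientation --- in the paper this requires a separate, nontrivial argument via negative definiteness of the local intersection forms and Seifert euler numbers (Lemma~\ref{lem:orientation_gen_sing}), and it is needed only for Theorem~\ref{thm:meridian}, not for Fact~\ref{fact:comb}. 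Second, the normal form of $\partial U(\mcC_i)$ is \emph{not} the resolution dual graph $\Gamma_i$: the calculus blows down the $(-1)$-vertices of the embedded resolution and collapses chains by continued fractions, so the data you want to recover ($\bm{e}_i$ on exceptional vertices and on rational strict transforms of low valence, together with the marking) is exactly what the moves alter. Inverting the reduction to recover $(\Gamma_i,\Str_i,\bm{e}_i)$ is the entire content of the implication, and you defer it as ``bookkeeping''. Likewise, reading degrees off the self-intersections of strict transforms presupposes that those self-intersections already match, which is what is to be proved.

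The paper's route for (i) $\Rightarrow$ (ii) avoids the global $3$-manifold classification entirely. It works locally at each singular point, using \cite[III, Theorem~21]{brieskorn_knorrer_1986} to obtain an isomorphism $\varphi_{P_1}$ of the local resolution graphs compatible with the matching of branches; it then notes that $h$ preserves the genera of components, recovers the degrees from the genus formula $\bm{g}(v)=\tfrac{(d-1)(d-2)}{2}-\sum\delta$ together with the equality of delta invariants supplied by $\varphi_{P_1}$, and only then concludes that the self-intersections of the strict transforms agree (Remark~\ref{rem:fact_comb}). To salvage your global argument you would need to supply both the orientation-preservation step and an inverse to the normal-form reduction; the local argument is substantially shorter.
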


Zariski \cite{zariski_1929} 
proved that the fundamental group $\pi_1(\PP^2\setminus C)$ for a sextic curve $C\subset\PP^2$ with $6$ cusps is isomorphic to the free product $(\ZZ/2\ZZ)\ast(\ZZ/3\ZZ)$ if there exists a conic passing through the $6$ cusps of $C$, and to $\ZZ/6\ZZ$ otherwise. 
This result shows that the converse of Fact~\ref{fact:comb} (ii) is false. 
By Artal \cite{artal_1994}, a pair $(\mcC_1,\mcC_2)$ of plane curves $\mcC_1,\mcC_2\subset\PP^2$  is called a \textit{Zariski pair} if $\mcC_1$ and $\mcC_2$ have the same combinatorics but different embedded topology. 
The fundamental group $\pi_1(\PP^2\setminus\mcC)$ for a plane curve $\mcC\subset\PP^2$ is an effective invariant, and one of main objects in the study of the embedded topology (cf.\ \cite{act2008}). 
On the other hand, there are examples of Zariski pairs $(\mcC_1,\mcC_2)$ with $\pi_1(\PP^2\setminus\mcC_1)\cong\pi_1(\PP^2\setminus\mcC_2)$, which are called \textit{$\pi_1$-equivalent Zariski pairs} (cf.\ \cite{amram_bannai_shirane_sinichkin_tokunaga_2023}, \cite{degtyarev_2008}, \cite{shimada2003, shirane2017}). 
Hence it is important to introduce invariants different from the fundamental group. 

Guerville-Ball\'e and Meilhan introduced the \textit{linking invariant} for plane curves in \cite{guerville_meilhan_2020}, which is derived from the linking number in the link theory. 
The linking invarinat is a generalization of \textit{$\mcI$-invariant} for line {\red arrangements} defined by Artal, Florens and Guerville-Ball\'e \cite{afg2017_I_inv}. 
For line arrangements, these invariants are generalized to more precise invariants, the \textit{loop-linking number} by Cadegan-Schlieper \cite{cad2018} and the \textit{homology inclusion} by Rodau \cite{rodau:2025aa} via the inclusion of the boundary manifold into the complement of a line arrangement. 

From an algebro-geometric view-point, Bannai \cite{bannai2016} and the author \cite{shirane2017, shirane2018, shirane2019} introduced several \textit{splitting invariants}, which is induced from the studies of \textit{splitting curves} with respect to a double cover by Artal and Tokunaga 
{\red 
et al. \cite{artal_carmona_cogolludo_tokunaga_2001}, \cite{artal_cogolludo_tokunaga_2007}, \cite{artal_tokunaga_2004}, \cite{tokunaga_2010}. 
} 
For two plane curves $\mcB,\mcC\subset\PP^2$ with no common component, and a Galois cover $\phi\colon X\to\PP^2$ branched along $\mcB$, a splitting invariant of $\mcC$ with respect to $\phi$ is some effective data of the configuration of components of $\phi^{-1}(\mcC\setminus\mcB)$. 
Both of the linking and splitting invariants {\red encode} how a plane curve $\mcC$ is ``entangled" with the other curve $\mcB$, and they {\red are not determined by} the fundamental group (cf.\ \cite{shirane2017} and \cite{guerville_shirane_2017}). 
Splitting invariants are effectively used for distinguishing the embedded topology (cf.\ \cite{amram_bannai_shirane_sinichkin_tokunaga_2023}, \cite{abst2023}, \cite{abst_MaxFlex_2023}, \cite{bannai_kawana_masuya_tokunaga_2022}). 

Let $G$ be a finite group, and let $\phi\colon X\to\PP^2$ be the $G$-cover induced by a surjection $\theta\colon\pi_1(\PP^2\setminus\mcB)\twoheadrightarrow G$. 
{\red 
Since the unbranched cover $X\setminus\phi^{-1}(\mcB)\to \PP^2\setminus\mcB$ is uniquely determined by $\theta$ topologically, the data of $\phi^{-1}(\mcC\setminus\mcB)$ coincide with that of $\phi^{-1}(h(\mcC)\setminus\mcB)$ for a homeomorphism $h\colon \PP^2\setminus\mcB\to\PP^2\setminus\mcB$ and a plane curve $\mcC\subset\PP^2$. 
However, it is not sure whether the branched cover $\phi$ is uniquely determined by $\theta$. Hence the splitting invariants do not contain any information of the ramification locus $\phi^{-1}(\mcB)$. 
} 
The aim of this note is to give a generalization of splitting invariants including information near $\phi^{-1}(\mcB)$,  called the \textit{$G$-combinatorial type} (or the \textit{$G$-combinatorics} for short). 
The idea is to investigate the boundary of a certain regular neighborhood of a plane curve, called the \textit{boundary manifold}, based on arguments of \textit{graph manifolds} \cite{waldhausen_1967}, \textit{plumbing graphs} \cite{neumann1981} and \textit{modified plumbing graphs} \cite{hironaka2000}. 
{\red An obstruction} to do this is that the plumbing graph naturally obtained from a plane curve (which corresponding to its combinatorics) may be non-normal in the sense of \cite{neumann1981}. 
In order to avoid this {\red obstruction}, we first discuss locally boundary manifolds near singularities of plane curves, and secondly consider the boundary manifold globally. 
As an application of $G$-combinatorics, we distinguish the embedded topology of \textit{quasi-triangular curves}, which is a generalization of triangular curves studied in \cite{artal_cogolludo_martin_2019} (Section~\ref{sec:qt_curve}). 
The embedded topology of quasi-triangular curves are not distinguished by splitting invariants.


In this note, we define a regular neighborhood $\mcT(\mcC)$ of a plane curve $\mcC$ as the algebraic neighborhood \cite{durfee_1983}, and we call its boundary $\BM(\mcC):=\partial\mcT(\mcC)$ the \textit{boundary manifold} of $\mcC$ (Subsection~\ref{subsec:reg_nbd}). 
We will obtain Theorem~\ref{thm:meridian} below to prove the invariance of $G$-combinatorics in Section~\ref{sec:comb}. 
Let $\mcC_1,\mcC_2\subset\PP^2$ be two plane curves, 
and let $\sigma_i\colon Y_i\to\PP^2$ be the minimal good embedded resolution of $\mcC_i$. 
Put $\mcC_{i}':=\sigma_i^{-1}(\mcC_i)$. 
Note that the combinatorics of $\mcC_i$ is redefined as the dual graph $\Comb(\mcC_i)$ of $\mcC_i'$ with marking $\Str_{\mcC_i}$ of strict transforms of components of $\mcC_i$ in \cite{abst2023} (see Subsection~\ref{subsec:comb} for details). 
We can define a meridian ${\red\gtm_{D}}\in\pi_1(\PP^2\setminus\mcC_i)$ of each component $\red D$ of $\mcC'_{i}$ in $Y_i\setminus\mcC_{i}'=\PP^2\setminus\mcC_i$, which have canonically orientation derived from the complex structure of $Y_i$ (cf.\  \cite[Definition 1.2]{act2008}, \cite[p.133]{shirane2019}). 
We call $\red\gtm_{D}^{-1}$ an \textit{anti-meridian} of a component $\red D$ of $\mcC_{i}'$, which is a meridian with reversed orientation. 
It is known that a homeomorphism $h\colon U(\mcC_1)\rightarrow U(\mcC_2)$ induces an isomorphism $\Psi_h^\infty\colon\pi_1(\BM(\mcC_1))\to\pi_1(\BM(\mcC_2))$. 
The following theorem follows from Theorem~\ref{thm:homeo_boundary}.

\begin{thm}\label{thm:meridian}
	Assume that there is a homeomorphism $h\colon U(\mcC_1)\rightarrow U(\mcC_2)$ with $h(\mcC_1)=\mcC_2$ for some regular neighborhoods $U(\mcC_i)$. 
	Then there exist an equivalence map $\varphi\colon\Comb(\mcC_1)\to\Comb(\mcC_2)$ and a homeomorphism $\Psi_\BM\colon\BM(\mcC_1)\to\BM(\mcC_2)$ such that
\begin{enumerate}
	\item $\Psi_{\BM\ast}=\Psi_h^\infty$; 
	\item there is $\alpha\in\{\pm1\}$ such that $\red\Psi_{\BM\ast}(\gtm_{D})=\gtm_{\varphi(D)}^\alpha$ up to conjugate for any irreducible component $\red D$ of $\mcC_1'$, where $\red D$ is regarded as also a vertex of $\Comb(\mcC_i)$.  
\end{enumerate}
\end{thm}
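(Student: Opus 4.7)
The plan is to reduce the statement to Waldhausen's rigidity theorem for graph manifolds, applied to the plumbed $3$-manifold structure that $\BM(\mcC_i)$ inherits from the minimal good resolution $\sigma_i$. First, I would invoke Fact~\ref{fact:comb}: the existence of $h$ guarantees an abstract equivalence between the marked dual graphs $\Comb(\mcC_1)$ and $\Comb(\mcC_2)$. The particular $\varphi$ required by the theorem, however, will be read off later, as the bijection of vertices induced by the boundary homeomorphism $\Psi_\BM$ on Seifert-fibered pieces; starting from the abstract equivalence is not enough, since we need $\varphi$ to be realised by a concrete homeomorphism.

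Next, I would make $\BM(\mcC_i)$ explicit as a plumbed manifold. A regular neighborhood of $\mcC'_i\subset Y_i$ decomposes as a union of disk bundles over the irreducible components $C'_i$, plumbed at their intersection points, so the plumbing graph is the dual graph of $\mcC'_i$ weighted by the self-intersection numbers in $Y_i$. In this description, the $S^1$-fiber sitting over a generic point of $C'_i$ is, by its canonical complex orientation, a representative of the meridian $\gtm_{i,C'_i}$. Hence every Seifert piece of $\BM(\mcC_i)$ is naturally indexed by a vertex of $\Comb(\mcC_i)$, and its generic fiber is a meridian of the corresponding component.

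The core of the argument is to exploit this structure under the homeomorphism $h|_{\BM(\mcC_1)}\colon\BM(\mcC_1)\to\BM(\mcC_2)$ induced on boundaries. By Waldhausen's classification of graph manifolds, combined with Neumann's plumbing calculus (used to pass to normal-form plumbing graphs), one obtains an isotopy from $h|_{\BM(\mcC_1)}$ to a homeomorphism $\Psi_\BM$ that carries each Seifert-fibered JSJ piece of $\BM(\mcC_1)$ onto a unique Seifert piece of $\BM(\mcC_2)$ and preserves the $S^1$-fibrations. This bijection of pieces defines $\varphi\colon\Comb(\mcC_1)\to\Comb(\mcC_2)$, and assertion~(i) follows at once since $\Psi_\BM$ is isotopic to $h|_{\BM(\mcC_1)}$. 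For assertion~(ii), fiber-preservation forces $\Psi_{\BM\ast}(\gtm_{1,C'_1})$ to be a power of $\gtm_{2,\varphi(C'_1)}$; the orientations of the fibers in adjacent pieces are linked by the plumbing relations along the plumbed tori, so by connectedness of $\Comb(\mcC_i)$ all of these powers are either uniformly $+1$ or uniformly $-1$, yielding the global sign $\alpha\in\{\pm1\}$.

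The hard part will be the Waldhausen step, for two reasons. First, his theorem has well-known exceptional cases (small Seifert fibrations with non-unique fibrations, certain torus bundles) in which a given piece carries more than one Seifert structure, and one must verify that for boundary manifolds of plane curves these exceptions either do not arise or cannot reshuffle the meridians. Second, one must check that, after the normalisations demanded by Neumann's calculus, the fiber classes at the plumbing tori match consistently across all nodes, so that the sign $\alpha$ really is global rather than piece-dependent. Both difficulties are absorbed by the additional rigidity coming from the marking $\Str_{\mcC_i}$ on the dual graph and from Hironaka's modified plumbing graphs, which is precisely the content encoded by Theorem~\ref{thm:homeo_boundary}.
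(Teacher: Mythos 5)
Your overall architecture --- identify $\BM(\mcC_i)$ with the plumbed manifold of the resolution dual graph, observe that the generic $S^1$-fibers are meridians, and use Waldhausen--Neumann to make a homeomorphism graph-structure- and fiber-preserving so that the sign $\alpha$ propagates along the plumbing tori --- is the same as the paper's. But there are two concrete gaps at exactly the points you flag as ``the hard part.'' First, your starting object, ``the homeomorphism $h|_{\BM(\mcC_1)}\colon\BM(\mcC_1)\to\BM(\mcC_2)$ induced on boundaries,'' does not exist: $h$ is a homeomorphism of arbitrary regular neighborhoods, while $\BM(\mcC_i)=\partial\mcT(\mcC_i)$ for the distinguished neighborhoods, and $h$ need not carry $\partial\mcT(\mcC_1)$ onto $\partial\mcT(\mcC_2)$ (since $h$ is merely topological, one cannot even invoke PL uniqueness of regular neighborhoods to isotope $h(\mcT(\mcC_1))$ onto $\mcT(\mcC_2)$). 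All that $h$ provides is the isomorphism $\Psi_h^\infty$ on $\pi_1$, i.e.\ a homotopy equivalence. The paper therefore has to \emph{produce} a homeomorphism realizing this homotopy equivalence, which it does locally around each singular point by verifying that $\BM(\mcC_i,P_i)$ is irreducible, boundary-irreducible and sufficiently large and then applying Waldhausen's rigidity theorem for Haken manifolds (Lemma~\ref{lem:existence of gtg'}); only after that can any graph-structure isotopy be run.

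Second, Theorem~\ref{thm:Wald-2} (the isotopy to a fiber-preserving map) applies only to orientation-preserving homeomorphisms, and the homeomorphism obtained from Waldhausen's rigidity comes with no orientation control. The paper's Lemma~\ref{lem:orientation_gen_sing} settles this by a substantive argument: assuming orientation reversal, it computes the gluing matrices along maximal chains, passes to Neumann's orientation-reversal calculus, and derives a contradiction from the negative definiteness of the resolution graph via euler numbers of star-shaped Seifert pieces. Your proposal does not address orientation at all, and your closing sentence defers both difficulties to ``the content encoded by Theorem~\ref{thm:homeo_boundary}'' --- but that theorem \emph{is} the statement to be proved (Theorem~\ref{thm:meridian} is its corollary), and Hironaka's modified plumbing graphs play no role in its proof, so as written the argument is circular precisely where it needs to do work.
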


Let us state the main result of this note. 
Let $G$ be a finite group. 
Assume that there are surjective {\red homomorphisms} $\theta_i\colon \pi_1(\PP^2\setminus\mcC_i)\to G$. 
Then $\theta_i$ induce $G$-covers $\phi_i\colon X_i\to\PP^2$ and $\tilde\phi_i\colon \widetilde{X}_i\to Y_i$ branched along sub-curves of $\mcC_i$ and $\mcC'_{i}$, respectively. 
Put $\widetilde{\mcC}_{i}:=\tilde\phi_i^{-1}(\mcC'_{i})$. 
Since each irreducible component of $\widetilde{\mcC}_{i}$ is unibranched (cf.\ \cite[Chapter~II]{laufer1971}), we can define the dual graph $\Gamma_{i}^\theta$ of $\widetilde\mcC_{i}$, 
and we will define a \textit{$G$-combinatorics $\Comb_G(\mcC_i,\theta_i)$} of $\mcC_i$ with respect to $\theta_i$ based on $\Gamma_i^\theta$, which {\red describes} the topology of {\red the} boundary manifold of $\widetilde\mcC_{i}$ in $\widetilde X_i$ (Definition~\ref{def:G-comb}). 
For an equivalence map $\varphi\colon \Comb(\mcC_1)\to\Comb(\mcC_2)$, we will define a \textit{$G$-equivalence map $\varphi_G^\theta\colon \Comb_G(\mcC_1,\theta_1)\to\Comb_G(\mcC_2,\theta_2)$} with respect to $\varphi$ (Definition~\ref{def:G-equiv}). 
The following theorem follows from Theorem~\ref{thm:invariance_G-comb}. 

\begin{thm}\label{thm:invariance}
Let $\mcC_i\subset\PP^2$ be two plane curves with surjections $\theta_i\colon \pi_1(\PP^2\setminus\mcC_i)\to G$. 
Assume that there are a homeomorphism $h\colon \PP^2\to\PP^2$ with $h(\mcC_1)=\mcC_2$ and {\red an} automorphism $\tau\colon G\to G$ such that $\tau\circ\theta_1=\theta_2\circ h_\ast$, {\red where $h_\ast\colon \pi_1(\PP^2\setminus\mcC_1)\to\pi_1(\PP^2\setminus\mcC_2)$ is the isomorphism induced by $h$.  
Let} $\varphi\colon \Comb(\mcC_1)\to\Comb(\mcC_2)$ be the equivalence map induced by $h$. 
Then there is an equivalence map $\varphi_G^\theta\colon \Comb_G(\mcC_1,\theta_1)\to\Comb_G(\mcC_2,\theta_2)$ with respect to $\varphi$. 
\end{thm}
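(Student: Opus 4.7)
The plan is to apply Theorem~\ref{thm:meridian} to the restriction of $h$ to regular neighborhoods and then lift the resulting boundary homeomorphism to the $G$-covers over $\widetilde{X}_i$. Since $h\colon\PP^2\to\PP^2$ maps $\mcC_1$ to $\mcC_2$, it restricts to a homeomorphism $U(\mcC_1)\to U(\mcC_2)$ between suitable regular neighborhoods, so Theorem~\ref{thm:meridian} supplies the equivalence map $\varphi\colon\Comb(\mcC_1)\to\Comb(\mcC_2)$ of the statement together with a homeomorphism $\Psi_\BM\colon\BM(\mcC_1)\to\BM(\mcC_2)$ and a sign $\alpha\in\{\pm1\}$ satisfying $\Psi_{\BM\ast}=\Psi_h^\infty$ and $\Psi_{\BM\ast}(\gtm_{1,C_1'})=\gtm_{2,\varphi(C_1')}^\alpha$ for every irreducible component $C_1'$ of $\mcC_1'$.

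Next I lift $\Psi_\BM$ to a $\tau$-equivariant homeomorphism between the $G$-covers $\widetilde\BM_i\to\BM(\mcC_i)$ obtained by restricting $\tilde\phi_i$ to the boundary. Such covers are classified by the compositions $\pi_1(\BM(\mcC_i))\to\pi_1(\PP^2\setminus\mcC_i)\to G$ coming from $\theta_i$, and the desired lift exists precisely when these compositions agree after twisting by $\Psi_{\BM\ast}$ and $\tau$. Since $\pi_1(\PP^2\setminus\mcC_i)$ is generated by meridians and $\Psi_{\BM\ast}$ corresponds to $h_\ast$ under the inclusion of $\BM(\mcC_i)$ into the complement, verifying $\theta_2\circ\Psi_{\BM\ast}=\tau\circ\theta_1$ reduces to checking the identity on meridians, where it follows at once from Theorem~\ref{thm:meridian}(ii) combined with the hypothesis $\tau\circ\theta_1=\theta_2\circ h_\ast$.

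Finally, the lift $\widetilde\Psi\colon\widetilde\BM_1\to\widetilde\BM_2$ carries the boundary torus of each component of $\widetilde\mcC_1$ onto the boundary torus of some component of $\widetilde\mcC_2$, inducing a bijection $\varphi_G^\theta$ between the vertex sets of the dual graphs $\Gamma_1^\theta$ and $\Gamma_2^\theta$ that respects the graph structure, is compatible with the $G$-actions via $\tau$, and, together with $\varphi$, preserves the decorations entering into $\Comb_G$. Verifying that $\varphi_G^\theta$ meets the axioms of Definition~\ref{def:G-equiv} is then bookkeeping, using the corresponding properties already established for $\varphi$. The main obstacle is the lifting step: the identity $\theta_2\circ\Psi_{\BM\ast}=\tau\circ\theta_1$ after restriction to $\pi_1(\BM)$ requires both the meridian-matching property from Theorem~\ref{thm:meridian}(ii) and the global compatibility $\tau\circ\theta_1=\theta_2\circ h_\ast$ to ensure that the pullback $G$-cover coincides, up to the twist by $\tau$, with the one over $\BM(\mcC_1)$.
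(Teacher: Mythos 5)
Your proposal follows essentially the same route as the paper: it invokes the boundary homeomorphism $\Psi_\BM$ from Theorem~\ref{thm:meridian} (i.e.\ Theorem~\ref{thm:homeo_boundary}), lifts it $\tau$-equivariantly to the $G$-covers of the boundary manifolds using the compatibility $\tau\circ\theta_1=\theta_2\circ h_\ast$ together with the fact that $\Psi_{\BM\ast}$ is induced by $h_\ast$ under the inclusion into the complement, and reads off $\varphi_G^\theta$ from the lifted homeomorphism, with the weight and marking conditions following from the meridian- and fiber-preservation properties of $\Psi_\BM$. This matches the paper's proof of Theorem~\ref{thm:invariance_G-comb}, so the proposal is correct.
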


\begin{rem}
The $G$-combinatorics $\Comb_G(\mcC_i,\theta_i)$ of $\mcC_i\subset\PP^2$ with respect to $\theta_i\colon \pi_1(\PP^2\setminus\mcC_i)\to G$ can be algebraically constructed from the minimal resolution 
$\tilde{\sigma}'_{i}\colon \widetilde{X}'_{i}\to \widetilde{X}_i$ of the $G$-cover $\tilde\phi_i\colon \widetilde{X}_i\to Y_i$ and the pull-back ${\tilde{\sigma}_i^{\prime\ast}}\circ\phi_i^\ast(\mcC'_{i})$, and 
describe the topology of the boundary manifold of $\tilde\phi_i^{-1}(\mcC'_{i})$ in $\widetilde X_i$ (cf.\ Remark~\ref{rem:modified plumbing graph}). 
Hence it may be good to compare some topological invariant (like the linking invariant) and some algebraic data (like splitting invariants) through $G$-combinatorics. 
In fact, known splitting invariants can be constructed from $G$-combinatorics (see Section~\ref{sec:splitting_G-comb}). 
\end{rem}

This note is organized as follows: 
In Section~\ref{sec:preliminary}, we recall the definitions of graph manifolds, plumbing graphs and modified plumbing graphs, and several results in \cite{waldhausen_1967}, \cite{neumann1981}. 
In Section~\ref{sec:comb}, we give proofs of Fact~\ref{fact:comb} and Theorem~\ref{thm:meridian} by using arguments of \cite{waldhausen_1967} and \cite{neumann1981}. 
In Section~\ref{sec:G-comb}, we define the $G$-combinatorics based on the results in \cite{hironaka2000}, and prove Theorem~\ref{thm:invariance}. 
In Section~\ref{sec:splitting_G-comb}, we construct known splitting invariants from the $G$-combinatorics. 
In Section~\ref{sec:qt_curve}, we distinguish the embedded topology of quasi-triangular curves by $G$-combinatorics. 
In Appendix, we recall results of \cite{waldhausen_1967}, \cite{neumann1981} and several papers, which we use in this note. 

\medskip

\noindent
\textbf{Acknowledgements:} 
The author thanks Professor E.~Artal~Bartolo for his valuable comments. 
He also thanks the reviewers for their valuable comments and suggestions that helped improve this note. 

\section{Preliminary}\label{sec:preliminary}

We call a manifold of real dimension $n$ an \textit{$n$-manifold}. 
For a compact manifold $M$ and a union $F$ of closed subspaces of $M$, we choose a (finite) triangulation, in which each component of $F$ is a subcomplex. 
The closed star of $F$ in the second barycentric subdivision for the triangulation of $M$ is called a \textit{regular neighborhood} of $F$ in $M$, denoted by $U_M(F)$. 
If there is no confusion, $U_M(F)$ is simply denoted as $U(F)$. 
We may assume that a regular neighborhood is sufficiently small. 
For a subset $A\subset M$, let $A^\circ$ denote the interior of $A$ in $M$. 

In this note, a \textit{graph} $\Gamma$ means a collection of vertices $\mcV(\Gamma)$ and oriented edges $\mcY(\Gamma)$ together with maps 
\begin{align*}
	o&\colon \mcY(\Gamma)\longrightarrow \mcV(\Gamma), &
	t&\colon \mcY(\Gamma)\longrightarrow \mcV(\Gamma), &
	\bar{\ \ }&\colon  \mcY(\Gamma) \longrightarrow \mcY(\Gamma) \quad (y\longmapsto \overline{y})
\end{align*} 
satisfying $o(\overline{y})=t(y)$, $t(\overline{y})=o(y)$, $\overline{y}\ne y$ and $\overline{\overline{y}}=y$. 
For each $y\in\mcY(\Gamma)$, we call $o(y)$ the {\it initial vertex}, $t(y)$ the {\it terminal vertex}, and $\overline{y}$ the {\it inverse edge} of $y$. 
In a figure of a graph $\Gamma$, we depict the set $\{y,\overline{y}\}$ as a segment from $o(y)$ to $t(y)$ for each edge $y\in\mcY(\Gamma)$. 
Let 
$ \bm{d}\colon \mcV(\Gamma)\to\ZZ_{\geq0} $ 
be the \textit{degree map}, i.e., $\bm{d}(v)$ is the number of edges $y\in\mcY(\Gamma)$ with $t(y)=v$ for each $v\in\mcV(\Gamma)$.

Throughout this note, we put 
\begin{align*}
	I&:=[0,1]\subset\RR,  &
	{\red \DD^2}&:=\{z\in\CC\mid |z|\leq 1\}, &
	S^1&:=\{z\in \CC\mid |z|=1\}. 
\end{align*}
For $A=(\begin{smallmatrix} a&b\\c&d \end{smallmatrix})
\in\SL(2,\ZZ)$, $A$ also denotes the diffeomorphism $T\to T$ of the torus $T:=S^1\times S^1$ given by $A(t_1, t_2):=(t_1^at_2^b, t_1^ct_2^d)$. 
Throughout this note, we assume that the total space $M$ of an $S^1$-bundle $p\colon M\to B$ is orientable (the base space $B$ may be non-orientable). 

Let $B$ be a (compact) $2$-manifold of genus $g$ with $d$ boundary components $\partial B_1,\dots, \partial B_d\cong S^1$, with the convention that $g<0$ if $B$ is non-orientable. 
Let $p\colon M\to B$ be an $S^1$-bundle. 
For a boundary component $\partial M_i:=p^{-1}(\partial B_i)$ of $M$, an orientation preserving homeomorphism $\tau_i\colon S^1\times S^1\to\partial M_i$ is called a \textit{trivialization} of $\partial M_i$ if $p\circ\tau_i$ is a projection to either the first or second component of $S^1\times S^1$. 
Unless otherwise noted, we assume that $p\circ\tau_i$ is the projection to the first component: 
\[ p\circ \tau_i(t_1,t_2)=
t_1\in S^1\cong \partial {\red B_i}. 
\]
For an $S^1$-bundle $p\colon M\to B$, we fix a trivialization for each boundary component of $M$. 
Then the {\red \textit{Euler number} $e$ of the $S^1$-bundle $p$ is the cross-section obstruction (cf.\ \cite[p.\ 59]{HNK1971})}. 
In \cite{waldhausen_1967}, several properties of $S^1$-bundles over $2$-manifolds are proved. 
In Appendix~\ref{sec:S1-bdls}, results which we need in this note are stated.

\subsection{Graph manifolds}

We recall \textit{graph manifolds}, which are defined and classified (with minor exceptions) by Waldhausen \cite{waldhausen_1967}. 
In particular, we state several facts for graph manifolds with boundary. 
(In \cite{waldhausen_1967}, the same facts are proved for graph manifolds without boundary except for several cases.) 

Let $M$ be a $3$-manifold, and let $\TT:=T_1\cup\dots\cup T_n\subset M^\circ$ ($n\geq0$) be a disjoint union of tori. 
Let $U(\TT)$ be a regular neighborhood of $\TT$ in $M$. 
{\red We call $\TT$} a \textit{graph structure} for $M$ if every {\red connected} component of $M\setminus U(\TT)^\circ$ is homeomorphic to an $S^1$-bundle over $2$-manifolds. 
A $3$-manifold with a graph structure is called a \textit{graph manifold}. 

A graph structure $\TT$ of a $3$-manifold $M$ is said to be \textit{reduced} if  it does not satisfy {\red any of the ten conditions} in \cite[Definition~6.2]{waldhausen_1967}. We omit to state the ten conditions, but that $M\setminus U(\TT)^\circ$ has no component homeomorphic to $I\times S^1\times S^1$. 
A $3$-manifold with a fixed reduced graph structure is called a \textit{reduced graph manifold}. 
For reduced graph manifolds with boundary, Waldhausen proved the following theorem. 

\begin{thm}[cf. {\cite[Theorem~8.1]{waldhausen_1967}}]\label{thm:Wald-1}
For each $i=1,2$, let $M_i$ be a reduced graph manifold with boundary and a graph structure $\TT_i$. 
If there is a homeomorphism $h\colon M_1\to M_2$, then there is an isotopy $\gth_u\colon M_1\to M_2$ $(0\leq u\leq 1)$ such that $\gth_0=h$ and $\gth_1(\TT_1)=\TT_2$. 
\end{thm}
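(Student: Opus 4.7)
The plan is to use the incompressibility of the decomposing tori in a reduced graph structure together with Waldhausen's theory of incompressible surfaces in Haken $3$-manifolds. First I would establish that each torus $T\subset\TT_i$ is incompressible and not boundary-parallel in $M_i$: a compressing disc, put in general position with respect to the $S^1$-fibration of the adjacent graph piece, gives by innermost-disc arguments either a sphere in an $S^1$-bundle over a $2$-manifold (forcing a very restricted base, ruled out by the reduced condition) or a compression of the base, which would allow the two $S^1$-bundles on either side of $T$ to be glued into one — this is precisely one of the ten configurations ruled out in \cite[Definition~6.2]{waldhausen_1967_2}. Boundary-parallelism is excluded because $M_i\setminus U(\TT_i)^\circ$ has no $I\times S^1\times S^1$ components. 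Hence both $h(\TT_1)$ and $\TT_2$ are essential unions of tori in the Haken manifold $M_2$.

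Next I would put $h(\TT_1)$ in efficient position with respect to $\TT_2$. By standard incompressible-surface arguments in Haken manifolds (innermost circle and outermost arc), $h$ can be isotoped so that $h(\TT_1)\pitchfork\TT_2$ and every component of intersection is an essential circle on both surfaces, with no pair of intersection circles cobounding a product annulus on either side. Then for each graph piece $N\subset M_2\setminus U(\TT_2)^\circ$, the intersection $h(\TT_1)\cap N$ is an incompressible, boundary-incompressible surface in the $S^1$-bundle $N$. By the classification of such surfaces in Seifert fibered spaces, each component is either \emph{vertical} (a union of $S^1$-fibres) or \emph{horizontal} (transverse to all fibres); a horizontal component would force $N$ to be a surface bundle over $S^1$ in such a way that $N$ combines with its neighbour across the adjacent component of $\TT_2$ into a single $S^1$-bundle, again contradicting the reduced hypothesis. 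Thus every component of $h(\TT_1)\cap N$ is vertical, and in particular each torus of $h(\TT_1)$ becomes vertical in some piece of $M_2\setminus U(\TT_2)^\circ$.

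I would finish by a matching argument. The vertical tori $h(\TT_1)$ refine the graph structure $\TT_2$ into a new system $\TT_2\cup h(\TT_1)$, and any extra vertical torus inside a single Seifert piece $N$ either bounds a collar of an existing boundary torus of $N$ or decomposes $N$ into two pieces whose $S^1$-fibrations glue back into a bundle — both situations appear explicitly in the list of reduced conditions and so cannot occur. Applying the same argument with $h^{-1}$ shows that no torus of $\TT_2$ is missing from $h(\TT_1)$. Therefore $h(\TT_1)$ and $\TT_2$ agree as unions of tori after a further isotopy supported in collar neighbourhoods of the matched tori, and concatenating the isotopies produces the desired $\gth_u$ with $\gth_0=h$ and $\gth_1(\TT_1)=\TT_2$.

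The main obstacle will be the final matching step, because the non-uniqueness of Seifert fibrations on low-complexity pieces (notably $I\times T^2$, the twisted $I$-bundle over the Klein bottle, and a handful of small Seifert fibered spaces) can a priori produce genuinely distinct graph structures on the same $3$-manifold. These cases are exactly what the ten clauses in Waldhausen's reduced condition are designed to rule out, and verifying that the hypothesis is strong enough to handle every such ambiguity — especially for pieces meeting $\partial M_i$ nontrivially, which is the setting of the present theorem — is the delicate technical core of the argument.
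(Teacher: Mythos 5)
The paper does not prove this statement at all: it is imported verbatim as Waldhausen's Theorem~8.1 from \cite{waldhausen_1967_2}, so there is no in-text argument to compare yours against. Measured against Waldhausen's original proof, your outline follows essentially the same route --- establish that the tori of a reduced graph structure are incompressible and not boundary-parallel, isotope $h(\TT_1)$ to be vertical in the $S^1$-bundle pieces of $M_2\setminus U(\TT_2)^\circ$, and then match the two systems of tori, with the ten clauses of \cite[Definition~6.2]{waldhausen_1967_2} doing the work of excluding redundant or ambiguous tori. Two remarks on the details. First, the vertical/horizontal step is easier than you make it: the pieces here are honest $S^1$-bundles (not general Seifert fibered spaces), and since every piece adjacent to a torus of $\TT_2$ has nonempty boundary, a horizontal incompressible surface in it would have to have boundary itself; a closed torus component of $h(\TT_1)\cap N$ therefore cannot be horizontal, so \cite[Theorem~2.8]{waldhausen_1967} (Lemma~\ref{lem:S1-bdl_vertical} in the appendix) already forces verticality without any surface-bundle argument. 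Second, your final matching step is the genuine technical core, exactly as you say: an extra vertical torus in a piece $N$ need not bound a collar or split $N$ into regluable bundles in an obvious way when $N$ is one of the small exceptional pieces (solid tori, $S^1$-bundles over the M\"obius band, the two graphs in (\ref{eq:W-graphs})), and checking that each such configuration is excluded by one of the ten reduced conditions --- including for pieces meeting $\partial M_i$ --- is precisely the content of Waldhausen's Sections~6--8 and cannot be waved through. As a proof sketch your proposal is sound and faithful to the original; as a complete proof it defers the hardest case analysis, which you candidly acknowledge.
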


For a reduced graph manifold $M$, Waldhausen defined a weighted graph $\Gamma_\W(M)$, which is called \textit{W-graph} in \cite{neumann1981}, and proved Theorem~\ref{thm:Wald-2} below. 
The W-graph $\Gamma_\W(M)$ consists of the following data: 
 
 \begin{itemize}
	\item There is a bijection from $\mcV(\Gamma_\W(M))$ to the set of connected components of $M\setminus U(\TT)^\circ$, and let $M_v$ be the component corresponding to the vertex $v$ for each $v\in\mcV(\Gamma_\W(M))$;
	\item if $M_v\not\cong S^1\times {\red \DD^2}$ for $v\in\mcV(\Gamma_\W(M))$, $M_v$ has the unique $S^1$-bundle structure $p_v\colon M_v\to B_v$ up to isotopy by Theorem~\ref{thm:S1-bdl_gene}, and $v$ is assigned a triple of integers $(g_v,r_v,s_v)$  where $g_v$ is the genus of $B_v$, $r_v$ is the number of boundary components of $M$ contained in $M_v$, and $s_v$ is the {\red Euler} number of $p_v\colon M_v\to B_v$ if $r_v=0$, $s_v$ vanishes if $r_v>0$;
	\item each $v\in\mcV(\Gamma_\W(M))$ with $M_v\cong S^1\times {\red\DD^2}$ is left unweighted; 
	\item there is a bijection from $\mcY(\Gamma_\W(M))$ to the set of components of $\partial U(\TT)$, write $y\mapsto T_y\subset \partial U(\TT)$ for each $y\in\mcY(\Gamma_\W(M))$, satisfying 
\begin{itemize}
	\item $T_y\subset M_{t(y)}$ for each $y\in\mcY(\Gamma_\W(M))$; 
	\item for each $T\subset \TT$, there is $y\in\mcY(\Gamma_\W(M))$ such that $\partial U(T)=T_y\cup T_{\overline{y}}$
\end{itemize}
(note that $M$ can be regarded as the manifold given by gluing $M_{t(y)}$ and $M_{t(\overline{y})}$ via certain homeomorphisms $T_y\to T_{\overline{y}}$ for $y\in\mcY(\Gamma_\W(M))$ since $U(T)\cong I\times S^1\times S^1$ for each component $T\subset \TT$);
	\item each $y\in\mcY(\Gamma_\W(M))$ is assigned a pair $(\alpha_y,\beta_y)$ of integers satisfying 
	\begin{align*}
		&\gcd(\alpha_y,\beta_y)=1, \qquad 0\leq\beta_y<\alpha_y, \qquad \alpha_{\overline{y}}=\alpha_{y}, \qquad 
		\beta_{\overline{y}}\beta_y\equiv 1 \pmod{\alpha_y}
	\end{align*}
	and $\beta_y>0$ {if $y$ is adjacent to an unweighted vertex,} which is uniquely determined by the pasting of $M_{o(y)}$ and $M_{t(y)}$ at $T_y$; 
	\item each edge $y\in \mcY(\Gamma_\W(M))$ is assigned a sign $\varepsilon_y\in\{\pm1\}$, which depends on orientations of fibers of $M_{o(y)}$ and $M_{t(y)}$. Let $\Gamma_\W^\ast(M)$ be the full subgraph of $\Gamma_\W(M)$ given by removing $v\in\mcV(\Gamma_\W(M))$ with $g_v<0$. Then $\Gamma_\W(M)$ is assigned the map $\varepsilon_M\colon H_1(\Gamma_\W^\ast(M))\to\ZZ/2\ZZ$ defined by $\varepsilon_M(C)$ equal to the number modulo $2$ of $y\in\mcY(C)$ with $\varepsilon_y=-1$ for any cycle $C$ of $\Gamma_\W^\ast(M)$, which is uniquely determined. 
\end{itemize}
Moreover, the \textit{equivalence} of W-graphs is also defined in \cite{waldhausen_1967}. 
Under our notation of graphs, 
for reduced graph manifolds $M_i$ with boundary, W-graphs $\Gamma_\W(M_1)$ and $\Gamma_\W(M_2)$ are \textit{equivalent} if they satisfy one of the following conditions; 
\begin{itemize}
	\item there is an isomorphism $\varphi\colon \Gamma_\W(M_1)\to \Gamma_\W(M_2)$ of graphs preserving weights $(g_v,r_v,s_v)$ of $v\in\mcV(\Gamma_\W(M_1))$ and $(\alpha_y,\beta_y)$ of $y\in\mcY(\Gamma_\W(M_1))$, and $\varphi$ is compatible with $\varepsilon_{M_i}\colon  H_1(\Gamma_\W^\ast(M_i))\to\ZZ/2\ZZ$, i.e., $\varepsilon_{M_1}=\varepsilon_{M_2}\circ\varphi_\ast$; 
	\item the pair of $\Gamma_\W(M_1)$ and $\Gamma_\W(M_2)$ is that of the following graphs; 
	\begin{align}\label{eq:W-graphs} \begin{aligned} \begin{tikzpicture}
		\coordinate (a1) at (0,0);
		\coordinate (a2) at (-1.5,0);
		\coordinate (a3) at (1.5,0);
		\coordinate (b1) at (5,0);
		\draw [fill] (a1) circle [radius=2pt] node [below] {\footnotesize $(0,1,-)$};
		\draw [fill] (a2) circle [radius=2pt] node [above] {\footnotesize $-$};
		\draw [fill] (a3) circle [radius=2pt] node [above] {\footnotesize $-$};
		\draw [-{Classical TikZ Rightarrow[length=4pt]}] (a1) -- (-1,0);
		\draw [-{Classical TikZ Rightarrow[length=4pt]}] (a1) -- (1,0);
		\draw (a1) -- node [above] {\footnotesize$(2,1)$} (a2); 
		\draw (a1) -- node [above] {\footnotesize$(2,1)$} (a3); 
		\draw [fill] (b1) circle [radius=2pt] node [below] {\footnotesize $(-1,1,-)$};
		\node at (3,0) {and};
	\end{tikzpicture} \end{aligned} \end{align}
\end{itemize}
We call the isomorphism $\varphi\colon \Gamma_\W(M_1)\to\Gamma_\W(M_2)$ above an \textit{equivalence map of W-graphs}. 
{\red The proof of \cite[Theorem~9.4]{waldhausen_1967}} shows the following theorem. 

\begin{thm}[{cf.\ \cite[Theorem~9.4]{waldhausen_1967}}]\label{thm:Wald-2}
	Let $M_i$, $i=1,2$, be two oriented reduced graph manifolds with boundary and reduced graph structure $\TT_i$. 
	Then $M_1$ and $M_2$ are orientation-preserving homeomorphic if and only if the W-graphs $\Gamma_\W(M_1)$ and $\Gamma_\W(M_2)$ are equivalent. 
	
	Moreover, if $h\colon M_1\to M_2$ {\red is} an orientation preserving homeomorphism, and the pair of $\Gamma_\W(M_i)$ is not that of {\rm (\ref{eq:W-graphs})}, then there is an isotopy $\gth_u\colon M_1\to M_2$ $(0\leq u\leq 1)$ such that $\gth_0=h$ and $\gth_1$ satisfies the following conditions:
	\begin{enumerate}
		\item $\gth_1(\TT_1)=\TT_2$; 
		\item $\gth_1$ induces an equivalence map $\varphi\colon \Gamma_\W(M_1)\to\Gamma_\W(M_2)$ of W-graphs such that $\gth_1(M_{1,v})=M_{2,\varphi(v)}$ for each $v\in\mcV(\Gamma_\W(M_1))$, where $M_{i,v_i}$ is the component of $M_i\setminus U(\TT_i)^\circ$ corresponding to $v_i\in\mcV(\Gamma_\W(M_i))$; 
		\item $\gth_1$ maps every fiber of the $S^1$-bundle $p_{1,v}\colon M_{1,v}\to B_{1,v}$ to that of $p_{2,\varphi(v)}\colon M_{2,\varphi(v)}\to B_{2,\varphi(v)}$ if $M_{1,v}\not\cong {\red \DD^2}\times S^1$ for $v\in\mcV(\Gamma_\W(M_1))$; 
	\end{enumerate}
\end{thm}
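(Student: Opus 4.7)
The plan is to follow Waldhausen's original strategy, splitting the argument into (a) the direction ``homeomorphism implies equivalent W-graphs'' together with the moreover clause, and (b) the converse direction ``equivalent W-graphs implies homeomorphism''.

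For (a), start from an orientation-preserving homeomorphism $h\colon M_1\to M_2$. First apply Theorem~\ref{thm:Wald-1} to replace $h$ by an isotopic homeomorphism with $h(\TT_1)=\TT_2$; this immediately yields a bijection $\varphi$ between the vertices and edges of $\Gamma_\W(M_1)$ and $\Gamma_\W(M_2)$, sending each piece $M_{1,v}$ to $M_{2,\varphi(v)}$ and giving item (i). Next, for each vertex $v$ with $M_{1,v}\not\cong S^1\times D^2$, invoke the uniqueness of the $S^1$-bundle structure (Theorem~\ref{thm:S1-bdl_gene}) to further isotope $h|_{M_{1,v}}$ to a fiber-preserving map; this produces items (ii) and (iii) and shows that the triple $(g_v,r_v,s_v)$ is preserved, as each of these numbers is a fiber-preserving homeomorphism invariant (genus of the base, number of base boundary components, cross-section obstruction). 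The edge weights $(\alpha_y,\beta_y)$ are read off from the matrix of $h|_{T_y}$ in the fixed trivializations of the two sides, so they are preserved by construction. The sign $\varepsilon_y$ and the induced $\varepsilon_M\colon H_1(\Gamma_\W^\ast)\to\ZZ/2\ZZ$ encode whether the chosen fiber orientations on adjacent pieces agree across each edge; since the isotoped $h$ is now fiber-preserving and orientation-preserving, this coherence is respected, giving $\varepsilon_{M_1}=\varepsilon_{M_2}\circ\varphi_\ast$.

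For (b), assume $\Gamma_\W(M_1)$ and $\Gamma_\W(M_2)$ are equivalent via $\varphi$ and build $h$ piece by piece. For each $v\in\mcV(\Gamma_\W(M_1))$, the weights $(g_v,r_v,s_v)$ determine the $S^1$-bundle $p_{1,v}\colon M_{1,v}\to B_{1,v}$ up to fiber-preserving homeomorphism by the classification of $S^1$-bundles over $2$-manifolds recalled in Appendix~\ref{sec:S1-bdls}; choose a fiber-preserving $h_v\colon M_{1,v}\to M_{2,\varphi(v)}$. These pieces must be matched at each boundary torus $T_y$: the pasting diffeomorphism on each side is determined up to composition with Dehn twists along the fiber, and the pair $(\alpha_y,\beta_y)$ is a complete invariant of this data, so equality of edge weights allows one to adjust the $h_v$ by fiber Dehn twists and reparametrisations of the base so that they agree on each $T_y$. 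Finally, the compatibility $\varepsilon_{M_1}=\varepsilon_{M_2}\circ\varphi_\ast$ on the signs $\varepsilon_y$ is precisely the cohomological condition needed to choose fiber orientations globally in a consistent manner; the resulting assembled map is an orientation-preserving homeomorphism $M_1\to M_2$.

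The main obstacle is the exceptional pair displayed in (\ref{eq:W-graphs}): here a single reduced graph manifold admits two inequivalent W-graphs, reflecting the well-known phenomenon that certain Seifert fibered $3$-manifolds admit more than one Seifert fibration. For such a manifold, an orientation-preserving self-homeomorphism between the two presentations cannot be made fiber-preserving with respect to both graph structures simultaneously, so the isotopy construction in (a) would fail to deliver items (i)--(iii); this is precisely why the moreover clause excludes the case. In the proof one has to recognise this exception by hand in (b), producing the homeomorphism using the alternative Seifert fibration structure rather than the formal piece-by-piece gluing. Ensuring that no further exceptions lurk--that is, that outside (\ref{eq:W-graphs}) the fiber-preserving isotopy really exists on every piece--is the delicate part inherited from Waldhausen's analysis and relies crucially on the reduced graph structure hypothesis ruling out $I\times S^1\times S^1$ collar pieces and the other nine configurations forbidden by \cite[Definition~6.2]{waldhausen_1967_2}.
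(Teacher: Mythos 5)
The paper does not give its own proof of this theorem: it simply states that it follows from Waldhausen's Theorem 9.4 and its proof, and your reconstruction (isotope via Theorem~\ref{thm:Wald-1} to match the graph structures, then isotope piecewise via Theorem~\ref{thm:S1-bdl_gene} to a fiber-preserving map, read off the invariants, and reverse the process for the converse, with the pair (\ref{eq:W-graphs}) as the one exception coming from the two fibrations of the $S^1$-bundle over the M\"obius band) is exactly that strategy. The only imprecision is that $(\alpha_y,\beta_y)$ is the matrix data of the gluing of $M_{o(y)}$ to $M_{t(y)}$ rather than of $h|_{T_y}$ itself, but your conclusion that it is preserved once $h$ is fiber-preserving on both adjacent pieces is correct.
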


\subsection{Plumbing graphs}\label{sec:plumbing_graph}

Neumann \cite{neumann1981} defined \textit{plumbed $3$-manifolds} corresponding to \textit{plumbing graphs} based on Waldhausen's work \cite{waldhausen_1967}. 
In the appendix of \cite{neumann1981}, he also defined \textit{decorated plumbing graphs} to describe \textit{boundary framed graph manifolds}, which are graph manifolds with fixed trivialization of each boundary component. 
In this note, we associate a decorated plumbing graph with the graph manifold constructed from the induced plumbing graph as mentioned in \cite[p.339]{neumann1981}, which is called the \textit{plumbed manifold}. 
We recall the (connected) decorated plumbing graph and such a graph manifold.

A \textit{decorated plumbing graph} $\Gamma_\dplumb=(\Gamma, \bm{g}, \bm{e}, \sign; \Ver,\AH)$ is a finite connected graph $\Gamma$ together with a partition $\mcV(\Gamma)=\Ver(\Gamma)\sqcup\AH(\Gamma)$ such that $\bm{d}(v)\leq 1$ for each $v\in\AH(\Gamma)$, and maps
\begin{align*}
	 \bm{g} &\colon  \Ver(\Gamma) \longrightarrow \ZZ, &
	 \bm{e} &\colon  \Ver(\Gamma) \longrightarrow \ZZ, &
	 \sign &\colon  \mcY(\Gamma) \longrightarrow \{\pm1\}
\end{align*}
satisfying $\sign(y)=\sign(\overline{y})$ for any $y\in\mcY(\Gamma)$. 
A vertex in $\AH(\Gamma)$ is called a \textit{boundary vertex}, and drawn in the graph as an arrow head $\arrowhead$. 
An edge $y\in\mcY(\Gamma)$ is called a \textit{$(+)$-edge} if $\sign(y)=+1$, and a \textit{$(-)$-edge} otherwise. 
In the case where $\AH(\Gamma)=\emptyset$, $\Gamma_\dplumb$ is a plumbing graph, and we denote it by $\Gamma_\plumb=(\Gamma,\bm{g}, \bm{e}, \sign)$.

Let $\Gamma_\dplumb=(\Gamma,\bm{g},\bm{e}, \sign; \Ver, \AH)$ be a decorated plumbing graph with $\Ver(\Gamma)\ne\emptyset$. 
The \textit{plumbed manifold} $\PM(\Gamma_\dplumb)$ for $\Gamma_\dplumb$ is the graph manifold constructed as follows: 
\begin{enumerate}[label=(\Roman*)]
	\item For $v\in\Ver(\Gamma)$, let $B_v$ be a compact $2$-manifold of genus $\bm{g}(v)$ with $\bm{d}(v)$ boundary components $\partial B_{y}$, labeled by edges $y\in\mcY(\Gamma)$ with $t(y)=v$; and let 
	$p_v\colon  M_v\to B_v$ 
	be an $S^1$-bundle, where, for each $y\in\mcY(\Gamma)$ with $t(y)=v$, the boundary component $T_{y}:=p_{v}^{-1}(\partial B_y)$  
	of $M_{v}$ has a fixed trivialization 
	$ \tau_y\colon S^1\times S^1\to T_y $
	such that the {\red Euler} number of $p_v$ is $\bm{e}(v)$; 
	\item the plumbed manifold $\PM(\Gamma_\dplumb)$ is constructed by gluing $M_{t(y)}$ and $M_{t(\overline{y})}$ at $T_y\subset M_{t(y)}$ and $T_{\overline{y}}\subset M_{t(\overline{y})}$ by the homeomorphism
	\begin{align}\label{eq:gluing torus} 
	T_{\overline{y}}\xrightarrow{\ \tau_{\overline{y}\ }^{-1}}  S^1\times S^1 \xrightarrow{\sign(y) J} S^1\times S^1\xrightarrow{\ \tau_y\ } T_y 	
	\end{align}

	for each $y\in\mcY$ with $o(y),t(y)\in\Ver(\Gamma)$, where $J:=(\begin{smallmatrix} 0&1\\1&0 \end{smallmatrix})$. 
\end{enumerate}
For $v\in\Ver(\Gamma)$, we call $p_v\colon M_v\to B_v$ (or $M_v$) the \textit{$S^1$-bundle of $v$}, $B_v$ the \textit{base $2$-manifold of $v$}, a fiber of $p_v$ a \textit{fiber of $v$}. 
An edge $y\in\mcY(\Gamma)$ with $o(y)\in\AH(\Gamma)$ corresponds to a boundary component $T_y$ of $\PM(\Gamma_\dplumb)$.  
Since the gluing maps reverse orientation, $\PM(\Gamma_\dplumb)$ inherits compatible orientations from all pieces $M_v$. 
The number of boundary components of $\PM(\Gamma_\dplumb)$ is equal to $\#\AH(\Gamma)$. 
By abuse of notation, let $T_y$ denote the resulting torus in $\PM(\Gamma_\dplumb)$ by gluing $T_y\subset M_{t(y)}$ and $T_{\overline{y}}\subset M_{t(\overline{y})}$, hence $T_y=T_{\overline{y}}$ in $\PM(\Gamma_\dplumb)$. 

In this note, we consider $\PM(\Gamma_\dplumb)$ as a closed submanifold of a graph manifold $M$. 
An edge $y\in\mcY(\Gamma)$ with $t(y)\in\AH(\Gamma)$ corresponds to the boundary component $T_y$ of $M\setminus\PM(\Gamma_\dplumb)^\circ$ which is identified with $T_{\overline{y}}\subset M_{o(t)}\subset\PM(\Gamma)$ in $M$, and $t(y)\in\AH(\Gamma)$ corresponds to an $S^1$-bundle part $M_{t(y)}$ of $M\setminus\PM(\Gamma)^\circ$ such that $T_y\subset M_{t(y)}$.

\begin{rem}\label{rem:original}
	Here we do not give the definition of plumbing graph introduced in \cite{neumann1981}. 
	Plumbed manifolds having boundary without trivialization are considered by using the plumbing graphs in \cite{neumann1981}. 
	In \cite[p.339]{neumann1981}, it is mentioned that each boundary vertex $v\in\AH(\Gamma)$ of a decorated plumbing graph $\Gamma_\dplumb$ can be regarded as a vertex $v$ of a plumbing graph with weights $\bm{g}(v)=0$, $\bm{e}(v)=0$ and $r_v=1$, where $r_v$ is the number of ``free" boundary components of the $S^1$-bundle $M'_v$, i.e., $v\in\AH(\Gamma)$ corresponds to $M_v'\cong I\times S^1\times S^1$ one of whose boundary components does not have trivialization. 
	In this note, we can regard such $M_v'$ as a submanifold of the above $S^1$-bundle part $M_v\subset M\setminus\PM(\Gamma)$.
\end{rem}

\begin{rem}\label{rem:edge weight}
For the aim of this note, it is enough to consider decorated plumbing graphs with $\sign(y)=+1$ for any $y\in\mcY(\Gamma)$ in many cases. 
If we assume that $\sign(y)=+1$ for each $y\in\mcY(\Gamma)$, then, by omitting the map $\sign\colon\mcY(\Gamma)\to\{\pm 1\}$, we write $\Gamma_\dplumb=(\Gamma, \bm{g}, \bm{e}; \Ver, \AH)$ or $\Gamma_\plumb=(\Gamma,\bm{g},\bm{e})$ if $\AH(\Gamma)=\emptyset$. 
\end{rem}

\medskip

\textit{Notation}. 
We follow \cite[Appendix]{neumann1981} to describe a decorated plumbing graph. 
Namely, a vertex $v\in\mcV(\Gamma)$ is written as follows:
\[ \begin{tikzpicture}[yscale=1]
	\draw [fill] (0,0) circle [radius=2pt] node [above=2pt] {$\bm{e}(v)$} node [below=3pt] {$[\bm{g}(v)]$};
	\draw (0,0) -- (-2,0.32);
	\draw (0,0) -- (-2,-0.32);
	\draw (0,0) -- (2,0.32);
	\draw (0,0) -- (2,-0.32);
	\draw [dashed] (0,0) -- (-2.5,0.4);
	\draw [dashed] (0,0) -- (-2.5,-0.4);
	\draw [dashed] (0,0) -- (2.5,-0.4);
	\draw [dashed] (0,0) -- (2.5,0.4);
	\draw [fill] (-1.7,0) circle [radius=0.5pt];
	\draw [fill] (-1.7,0.15) circle [radius=0.5pt];
	\draw [fill] (-1.7,-0.15) circle [radius=0.5pt];
	\draw [fill] (1.7,0) circle [radius=0.5pt];
	\draw [fill] (1.7,0.15) circle [radius=0.5pt];
	\draw [fill] (1.7,-0.15) circle [radius=0.5pt];
\end{tikzpicture} \] 
We omit the weight of $\{y,\overline{y}\}$ if $\sign(y)=+1$, and we write $-$ if $\sign(y)=-1$. 
If $\bm{g}(v)=0$, we may omit it. 
For example, the left decorated plumbing graph is ``shorthand" for the right plumbing graph in the following graphs:
\[ \begin{tikzpicture}[yscale=0.8, xscale=0.7]
	\coordinate (a1) at (0,0);
	\coordinate (a2) at (2.5,0);
	\coordinate (a3) at (5,-0.5);
	\coordinate (a4) at (4,0.5);
	\coordinate (a5) at (4,-0.8);
	\coordinate (a6) at (-1, 0.3);
	\coordinate (a7) at (-1, -0.3);
	\draw [fill] (a1) circle [radius=2pt] node [below=2pt] {$[2]$} node [above=2pt] {$-1$};
	\draw [fill] (a2) circle [radius=2pt] node [below=2pt] {$[1]$} node [above=2pt] {$1$};
	\draw [fill] (a3) circle [radius=2pt] node [above=2pt] {$2$};
	\draw [fill] (a4) circle [radius=2pt] node [above=2pt] {$0$};
	\draw (a1) to [out=30, in=150] (a2);
	\draw (a1) to [out=-30, in=-150] (a2);
	\draw (a2) to (a3);
	\draw (a3) to [out=30, in=90] (6,-0.5) node [right] {\tiny $-$} to [out=-90, in=-30]  (a3);
	\draw [-{Stealth[length=7pt]}] (a1) to (a6);
	\draw [-{Stealth[length=7pt]}] (a1) to (a7);
	\draw (a2) to (a4);
	\draw [-{Stealth[length=7pt]}] (a3) to (a5);

	\begin{scope}[xshift=8.5cm]
			\coordinate (a1) at (0,0);
	\coordinate (a2) at (2.5,0);
	\coordinate (a3) at (5,-0.5);
	\coordinate (a4) at (4,0.5);
	\coordinate (a5) at (4,-0.8);
	\coordinate (a6) at (-1, 0.3);
	\coordinate (a7) at (-1, -0.3);
	\draw [fill] (a1) circle [radius=2pt] node [below=2pt] {$[2]$} node [above=2pt] {$-1$};
	\draw [fill] (a2) circle [radius=2pt] node [below=2pt] {$[1]$} node [above=2pt] {$1$};
	\draw [fill] (a3) circle [radius=2pt] node [below=2pt] {$[0]$} node [above=2pt] {$2$};
	\draw [fill] (a4) circle [radius=2pt] node [below=5pt, right=1pt] {$[0]$} node [above=2pt] {$0$};
	\draw (a1) to [out=30, in=150] node [above] {\tiny $+$} (a2);
	\draw (a1) to [out=-30, in=-150] node [above] {\tiny $+$} (a2);
	\draw (a2) to node [above] {\tiny $+$} (a3);
	\draw (a3) to [out=30, in=90] (6,-0.5) node [right] {\tiny $-$} to [out=-90, in=-30]  (a3);
	\draw [-{Stealth[length=7pt]}] (a1) to node [left=3pt, above] {\tiny $+$} (a6);
	\draw [-{Stealth[length=7pt]}] (a1) to node [left=3pt, below] {\tiny $+$} (a7);
	\draw (a2) to node [above] {\tiny $+$} (a4);
	\draw [-{Stealth[length=7pt]}] (a3) to node [left=3pt, below] {\tiny $+$} (a5);
	\end{scope}
\end{tikzpicture} \]
A \textit{chain of length $k$} in $\Gamma_\dplumb$ is any part $\gtc$ of $\Gamma_\dplumb$ of the following form with arbitrary edge signs (after relabeling vertices)
\begin{align*} 
\begin{aligned}
&\begin{tikzpicture}
	\coordinate (a10) at (-1.5,0);
	\coordinate (a11) at (-1,0);
	\coordinate (a1) at (0,0); 
	\coordinate (a2) at (1.5,0);
	\coordinate (a20) at (2.3,0);
	\coordinate (a21) at (3.7,0);
	\coordinate (a3) at (4.5,0);
	\coordinate (a30) at (5.5,0);
	\coordinate (a31) at (6,0);
	\node at (8,0.1) {$(k\geq 0)$, \ or};
	\draw[fill] (a1) circle [radius=2pt] node [above] {$e_1$};
	\draw[fill] (a2) circle [radius=2pt] node [above] {$e_2$};
	\draw[fill] (a3) circle [radius=2pt] node [above] {$e_k$};
	\draw[dashed] (a10) to (a11); 
	\draw (a11) to (a20);
	\draw[dashed] (a20) to (a21); 
	\draw (a21) to (a30);
	\draw[dashed] (a30) to (a31); 
\end{tikzpicture}
	\\
&\begin{tikzpicture}
	\coordinate (b10) at (-1.5,-1.3);
	\coordinate (b11) at (-1,-1.3);
	\coordinate (b1) at (0,-1.3); 
	\coordinate (b2) at (1.5,-1.3);
	\coordinate (b20) at (2.3,-1.3);
	\coordinate (b21) at (3.7,-1.3);
	\coordinate (b3) at (4.5,-1.3);
	\node at (8,-1.2) {$(k\geq 1)$,};
	\draw[fill] (b1) circle [radius=2pt] node [above] {$e_1$};
	\draw[fill] (b2) circle [radius=2pt] node [above] {$e_2$};
	\draw[fill] (b3) circle [radius=2pt] node [above] {$e_k$};
	\draw[dashed] (b10) to (b11); 
	\draw (b11) to (b20);
	\draw[dashed] (b20) to (b21); 
	\draw (b21) to (b3);
\end{tikzpicture}
\end{aligned}
\end{align*}
say $v_i$ the vertex on the above parts with $\bm{e}(v_i)=e_i$. 
The chain $\gtc$ above is said to be  \textit{incident to a boundary vertex} if either $v_1$ or $v_k$ is adjacent to a boundary vertex. 
An edge $y\in\mcY(\Gamma)$ is regarded as a chain of length $0$. 
A chain is \textit{maximal} if it can be included in no larger chain. 

\medskip

In \cite[Proposition~2.1]{neumann1981}, nine operations (R0) -- (R8) and their inverses to plumbing graphs are given, which do not change the diffeomorphism type of their plumbed $3$-manifolds (hence also homeomorphism type). 
In \cite[Appendix]{neumann1981}, (R0) -- (R7) are modified to \textit{F-calculus} for applying decorated plumbing graphs ((R8) is removed). Moreover, the \textit{F-normal forms} of decorated plumbing graphs are defined, which any decorated plumbing graph can be reduced to F-normal form using only F-calculus and their inverses.

\begin{rem}\label{rem:operation}
In this paper, we use the following five of F-calculus to obtain F-normal forms: 
\begin{align*} 
& \begin{tikzpicture}[scale=0.7]
	\coordinate (c1) at (0,5);
	\coordinate (c2) at (1.5,5);
	\coordinate (c3) at (3,5);
	\coordinate (c41) at (-1,5.3);
	\coordinate (c42) at (-1,4.7);
	\coordinate (c43) at (-1.5,5.45);
	\coordinate (c44) at (-1.5,4.55);
	\coordinate (c51) at (4,5.3);
	\coordinate (c52) at (4,4.7);
	\coordinate (c53) at (4.5,5.45);
	\coordinate (c54) at (4.5,4.55);

	\coordinate (d1) at (8,5);
	\coordinate (d2) at (9.5,5);
	\coordinate (d3) at (11,5);
	\coordinate (d41) at (7,5.3);
	\coordinate (d42) at (7,4.7);
	\coordinate (d43) at (6.5,5.45);
	\coordinate (d44) at (6.5,4.55);
	\coordinate (d51) at (12,5.3);
	\coordinate (d52) at (12,4.7);
	\coordinate (d53) at (12.5,5.45);
	\coordinate (d54) at (12.5,4.55);
	
	\draw[fill] (c1) circle [radius=2pt] node [above=2pt] {$e_i$} node [below=2.5pt] {$[g_i]$};
	\draw[fill] (c2) circle [radius=2pt] node [above=2pt] {$-1$};
	\draw (c1) -- (c2); 
	\draw (c1) -- (c41);
	\draw (c1) -- (c42);
	\draw[dashed] (c41) -- (c43);
	\draw[dashed] (c42) -- (c44);

	\draw [->] (5,5) -- (6,5);

	\draw[fill] (d1) circle [radius=2pt] node [above=2pt] {$e_i+1$} node [below=2.5pt] {$[g_i]$};
	\draw (d1) -- (d41);
	\draw (d1) -- (d42);
	\draw[dashed] (d41) -- (d43);
	\draw[dashed] (d42) -- (d44);


	\draw [fill] (-0.9,5.15) circle [radius=0.5pt];
	\draw [fill] (-0.9,5) circle [radius=0.5pt];
	\draw [fill] (-0.9,4.85) circle [radius=0.5pt];
	

	\draw [fill] (7.1,5.15) circle [radius=0.5pt];
	\draw [fill] (7.1,5) circle [radius=0.5pt];
	\draw [fill] (7.1,4.85) circle [radius=0.5pt];

	\node at (-2.7,5) {($\mathrm{R1}^+_0$)};

\end{tikzpicture}\\
&\, \begin{tikzpicture}[scale=0.7]
	\coordinate (c1) at (0,5);
	\coordinate (c2) at (1.5,5);
	\coordinate (c3) at (3,5);
	\coordinate (c41) at (-1,5.3);
	\coordinate (c42) at (-1,4.7);
	\coordinate (c43) at (-1.5,5.45);
	\coordinate (c44) at (-1.5,4.55);
	\coordinate (c51) at (4,5.3);
	\coordinate (c52) at (4,4.7);
	\coordinate (c53) at (4.5,5.45);
	\coordinate (c54) at (4.5,4.55);

	\coordinate (d1) at (8,5);
	\coordinate (d2) at (9.5,5);
	\coordinate (d3) at (11,5);
	\coordinate (d41) at (7,5.3);
	\coordinate (d42) at (7,4.7);
	\coordinate (d43) at (6.5,5.45);
	\coordinate (d44) at (6.5,4.55);
	\coordinate (d51) at (12,5.3);
	\coordinate (d52) at (12,4.7);
	\coordinate (d53) at (12.5,5.45);
	\coordinate (d54) at (12.5,4.55);
	
	\draw[fill] (c1) circle [radius=2pt] node [above=2pt] {$e_i$} node [below=2.5pt] {$[g_i]$};
	\draw[fill] (c2) circle [radius=2pt] node [above=2pt] {$-1$};
	\draw[fill] (c3) circle [radius=2pt] node [above=2pt] {$e_j$} node [below=2.5pt] {$[g_j]$};
	\draw (c1) -- (c2) -- (c3);
	\draw (c1) -- (c41);
	\draw (c1) -- (c42);
	\draw[dashed] (c41) -- (c43);
	\draw[dashed] (c42) -- (c44);
	\draw (c3) -- (c51);
	\draw (c3) -- (c52);
	\draw[dashed] (c51) -- (c53);
	\draw[dashed] (c52) -- (c54);

	\draw [->] (5,5) -- (6,5);

	\draw[fill] (d1) circle [radius=2pt] node [above=2pt] {$e_i+1$} node [below=2.5pt] {$[g_i]$};
	\draw[fill] (d3) circle [radius=2pt] node [above=2pt] {$e_j+1$} node [below=2.5pt] {$[g_j]$};
	\draw (d1) -- (d3);
	\draw (d1) -- (d41);
	\draw (d1) -- (d42);
	\draw[dashed] (d41) -- (d43);
	\draw[dashed] (d42) -- (d44);
	\draw (d3) -- (d51);
	\draw (d3) -- (d52);
	\draw[dashed] (d51) -- (d53);
	\draw[dashed] (d52) -- (d54);

	\draw [fill] (3.9,5.15) circle [radius=0.5pt];
	\draw [fill] (3.9,5) circle [radius=0.5pt];
	\draw [fill] (3.9,4.85) circle [radius=0.5pt];

	\draw [fill] (-0.9,5.15) circle [radius=0.5pt];
	\draw [fill] (-0.9,5) circle [radius=0.5pt];
	\draw [fill] (-0.9,4.85) circle [radius=0.5pt];
	
	\draw [fill] (11.9,5.15) circle [radius=0.5pt];
	\draw [fill] (11.9,5) circle [radius=0.5pt];
	\draw [fill] (11.9,4.85) circle [radius=0.5pt];

	\draw [fill] (7.1,5.15) circle [radius=0.5pt];
	\draw [fill] (7.1,5) circle [radius=0.5pt];
	\draw [fill] (7.1,4.85) circle [radius=0.5pt];

	\node at (-2.7,5) {($\mathrm{R1^{+}}$)};

\end{tikzpicture} \\
&\, \begin{tikzpicture}[scale=0.7]
	\coordinate (a1) at (0,2.5);
	\coordinate (a2) at (1.5,2.5);
	\coordinate (a3) at (3,2.5);
	\coordinate (a41) at (-1,2.8);
	\coordinate (a42) at (-1,2.2);
	\coordinate (a43) at (-1.5,2.95);
	\coordinate (a44) at (-1.5,2.05);
	\coordinate (a51) at (4,2.8);
	\coordinate (a52) at (4,2.2);
	\coordinate (a53) at (4.5,2.95);
	\coordinate (a54) at (4.5,2.05);

	\coordinate (b1) at (8,2.5);
	\coordinate (b2) at (9.5,2.5);
	\coordinate (b3) at (11,2.5);
	\coordinate (b41) at (7,2.8);
	\coordinate (b42) at (7,2.2);
	\coordinate (b43) at (6.5,2.95);
	\coordinate (b44) at (6.5,2.05);
	\coordinate (b51) at (12,2.8);
	\coordinate (b52) at (12,2.2);
	\coordinate (b53) at (12.5,2.95);
	\coordinate (b54) at (12.5,2.05);
	
	\draw[fill] (a1) circle [radius=2pt] node [above=2pt] {$e_i$} node [below=2.5pt] {$[g_i]$};
	\draw[fill] (a2) circle [radius=2pt] node [above=2pt] {$1$};
	\draw[fill] (a3) circle [radius=2pt] node [above=2pt] {$e_j$} node [below=2.5pt] {$[g_j]$};
	\draw (a1) -- node [above] {\tiny $-$} (a2) -- node [above] {\tiny $-$} (a3);
	\draw (a1) -- (a41);
	\draw (a1) -- (a42);
	\draw[dashed] (a41) -- (a43);
	\draw[dashed] (a42) -- (a44);
	\draw (a3) -- (a51);
	\draw (a3) -- (a52);
	\draw[dashed] (a51) -- (a53);
	\draw[dashed] (a52) -- (a54);

	\draw [->] (5,2.5) -- (6,2.5);

	\draw[fill] (b1) circle [radius=2pt] node [above=2pt] {$e_i-1$} node [below=2.5pt] {$[g_i]$};
	\draw[fill] (b3) circle [radius=2pt] node [above=2pt] {$e_j-1$} node [below=2.5pt] {$[g_j]$};
	\draw (b1) -- node [above] {\tiny $-$} (b3);
	\draw (b1) -- (b41);
	\draw (b1) -- (b42);
	\draw[dashed] (b41) -- (b43);
	\draw[dashed] (b42) -- (b44);
	\draw (b3) -- (b51);
	\draw (b3) -- (b52);
	\draw[dashed] (b51) -- (b53);
	\draw[dashed] (b52) -- (b54);

	\draw [fill] (3.9,2.65) circle [radius=0.5pt];
	\draw [fill] (3.9,2.5) circle [radius=0.5pt];
	\draw [fill] (3.9,2.35) circle [radius=0.5pt];

	\draw [fill] (-0.9,2.65) circle [radius=0.5pt];
	\draw [fill] (-0.9,2.5) circle [radius=0.5pt];
	\draw [fill] (-0.9,2.35) circle [radius=0.5pt];
	
	\draw [fill] (11.9,2.65) circle [radius=0.5pt];
	\draw [fill] (11.9,2.5) circle [radius=0.5pt];
	\draw [fill] (11.9,2.35) circle [radius=0.5pt];

	\draw [fill] (7.1,2.65) circle [radius=0.5pt];
	\draw [fill] (7.1,2.5) circle [radius=0.5pt];
	\draw [fill] (7.1,2.35) circle [radius=0.5pt];

	\node at (-2.7,2.5) {($\mathrm{R1}^-$)};

\end{tikzpicture} \\
& \begin{tikzpicture}[scale=0.7]
	\coordinate (h1) at (0.5,0);
	\coordinate (h3) at (3.5,0);
	\coordinate (h41) at (-0.5,0.3);
	\coordinate (h42) at (-0.5,-0.3);
	\coordinate (h43) at (-1.0,0.45);
	\coordinate (h44) at (-1.0,-0.45);

	\coordinate (g1) at (8.5,0);
	\coordinate (g3) at (11.5,0);
	\coordinate (g41) at (7.5,0.3);
	\coordinate (g42) at (7.5,-0.3);
	\coordinate (g43) at (7,0.45);
	\coordinate (g44) at (7,-0.45);
	
	\draw[fill] (h1) circle [radius=2pt] node [above=2pt] {$e_i$} node [below=2.5pt] {$[g_i]$};
	\draw[fill] (h3) circle [radius=2pt] node [above=2pt] {$-1$};
	\draw (h1) to [out=30, in=150] (h3) to [out=210, in=330] (h1);
	\draw (h1) -- (h41);
	\draw (h1) -- (h42);
	\draw[dashed] (h41) -- (h43);
	\draw[dashed] (h42) -- (h44);

	\draw [->] (5,0) -- (6,0);

	\draw [fill] (-0.4,0.15) circle [radius=0.5pt];
	\draw [fill] (-0.4,0) circle [radius=0.5pt];
	\draw [fill] (-0.4,-0.15) circle [radius=0.5pt];

	\draw[fill] (g1) circle [radius=2pt] node [above=3pt] {$e_i+2$} node [below=2.5pt] {$[g_i]$};
	\draw (g1) to [out=30, in=90] (g3) to [out=270, in=330] (g1);
	\draw (g1) -- (g41);
	\draw (g1) -- (g42);
	\draw[dashed] (g41) -- (g43);
	\draw[dashed] (g42) -- (g44);

	\draw [fill] (7.6,0.15) circle [radius=0.5pt];
	\draw [fill] (7.6,0) circle [radius=0.5pt];
	\draw [fill] (7.6,-0.15) circle [radius=0.5pt];

	\node at (-2.7,0) {(${\rm R1}^{++}$)};
\end{tikzpicture} \\
&\ \begin{tikzpicture}[scale=0.7]
	\coordinate (e1) at (0,-2.5);
	\coordinate (e3) at (2,-2.5);
	\coordinate (e41) at (-1,-2.8);
	\coordinate (e42) at (-1,-2.2);
	\coordinate (e43) at (-1.5,-2.95);
	\coordinate (e44) at (-1.5,-2.05);
	\coordinate (e51) at (3.2,-3);
	\coordinate (e52) at (3.2,-2);

	\coordinate (f1) at (9,-2.5);
	\coordinate (f41) at (8,-2.8);
	\coordinate (f42) at (8,-2.2);
	\coordinate (f43) at (7.5,-2.95);
	\coordinate (f44) at (7.5,-2.05);

	\draw[fill] (e1) circle [radius=2pt] node [above=2pt] {$e_i$} node [below=2.5pt] {$[g_i]$};
	\draw[fill] (e3) circle [radius=2pt] node [above=2pt] {$-1$};
	\draw[fill] (e51) circle [radius=2pt] node [right=2pt] {$-2$};
	\draw[fill] (e52) circle [radius=2pt] node [right=2pt] {$-2$};
	\draw (e1) -- (e3);
	\draw (e1) -- (e41);
	\draw (e1) -- (e42);
	\draw[dashed] (e41) -- (e43);
	\draw[dashed] (e42) -- (e44);
	\draw (e3) -- (e51);
	\draw (e3) -- (e52);

	\draw [->] (5.5,-2.5) -- (6.5,-2.5);

	\draw[fill] (f1) circle [radius=2pt] node [above=2pt] {$e_i$} node [below=12.5pt, right=-7pt] {$[g_i \# (-1)]$};
	\draw (f1) -- (f41);
	\draw (f1) -- (f42);
	\draw[dashed] (f41) -- (f43);
	\draw[dashed] (f42) -- (f44);

	\node at (-2.7,-2.5) {(R2)};
	
	\draw [fill] (-0.9,-2.65) circle [radius=0.5pt];
	\draw [fill] (-0.9,-2.5) circle [radius=0.5pt];
	\draw [fill] (-0.9,-2.35) circle [radius=0.5pt];

	\draw [fill] (8.1,-2.65) circle [radius=0.5pt];
	\draw [fill] (8.1,-2.5) circle [radius=0.5pt];
	\draw [fill] (8.1,-2.35) circle [radius=0.5pt];
	
\end{tikzpicture} 
\end{align*}
where $g\#(-1):=-2g-1$ if $g\geq 0$, and $g\#(-1):=g-1$ otherwise. 
In \cite{neumann1981}, the operations (${\rm R1}_0^+$), (${\rm R1}^+$), (${\rm R1}^-$) and (${\rm R1}^{++}$) are called \textit{blowing-downs} and their reverse are called \textit{blowing-ups}, and (R2) is called an \textit{$\RR\PP^2$-absorption} and its reverse is called an \textit{$\RR\PP^2$-extrusion}. 
Note that the blowing-down (${\rm R1}^{++}$) is different from the blowing-down of the dual graph for an algebraic curve on a complex surface. 
\end{rem}

%

\begin{rem}\label{rem:sign}
For a decorated plumbing graph $\Gamma_\dplumb=(\Gamma,\bm{g},\bm{e},\sign;\Ver, \AH)$, let $\Gamma^{\ast}$ be the full subgraph of $\Gamma$ defined by all vertices $v\in\Ver(\Gamma)$ with $\bm{g}(v)\geq0$ and all boundary vertices in $\AH(\Gamma)$. 
As mentioned in \cite[p.340]{neumann1981}, the information contained in the edge weights is completely coded up to the F-calculus corresponding to (R0) in \cite[Proposition~2.1]{neumann1981} by a homomorphism $\varepsilon_{\Gamma_\dplumb}\colon H_1(\Gamma^{\ast},\AH(\Gamma))\to\ZZ/2\ZZ$	which assigns to any relative cycle $C$ of $(\Gamma^{\ast},\AH(\Gamma))$ the number modulo $2$ of $(-)$-edges on $C$. 
\end{rem}

\begin{defin}\label{def:intersection_form}
	Let $\Gamma_\dplumb=(\Gamma,\bm{g},\bm{e};\Ver, \AH)$ be a decorated plumbing graph with $\sign(y)=+1$ for each $y\in\mcY(\Gamma)$. 
	The \textit{intersection form} $S(\Gamma_\dplumb)$ of $\Gamma_\dplumb$ can be defined as in \cite[Section~8]{neumann1981}. 
	Namely, if $\Ver(\Gamma)=\{v_1,\dots,v_r\}$, then $S(\Gamma_\dplumb):=(a_{ij})_{i,j=1,\dots,r}$ with 
	\begin{align*}
		a_{ij}:=\left\{\begin{array}{ll}
			\bm{e}(v_i) + \#\{y\in\mcY(\Gamma)\mid o(y)=t(y)=v_i\} & (i=j), \\[0.3em]
			\#\{ y\in\mcY(\Gamma) \mid o(y)=v_i, t(y)=v_j \} & (i\ne j). 
		\end{array}\right.
	\end{align*}
\end{defin}

In Appendix~\ref{sec:pgm}, we recall properties of plumbing graphs which are used in this note. 

\subsection{Modified plumbing graphs}

Hironaka \cite{hironaka2000} defined \textit{modified plumbing graphs} to describe normal surface-curve pairs $(X,\mcC)$ which are pairs of a complex normal surface $X$ and an algebraic curve $\mcC\subset X$. 
We will use modified plumbing graph to define a $G$-combinatorics for a plane curve. 
In this subsection, we recall the definition of the modified plumbing graph without boundary vertices. 

Let $\fin(S^1\times S^1)$ be the set of finite unbranched {\red coverings} from $S^1\times S^1$ to itself. 
A \textit{modified plumbing graph} $\Gamma_\plumb^\modi=(\Gamma, \bm{g}, \bm{e}, \bm{m})$ is a plumbing graph $\Gamma_\plumb=(\Gamma,\bm{g},\bm{e})$ satisfying $\sign(y)=+1$ for each $y\in\mcY(\Gamma)$ together with a map $\bm{m}\colon \mcY(\Gamma) \to \fin(S^1\times S^1)$ 
such that 
\begin{enumerate}
	\item by choosing certain bases of tori $S^1\times S^1$, the unbranched coverings $\bm{m}(y)\colon S^1\times S^1\to S^1\times S^1$ are represented by  matrices in $M_2(\ZZ)$ of the following form; 
	\begin{align}\label{eq:modi_matrix} \bm{m}(y)=\begin{pmatrix}  c(y) & 0 \\ b(y) & a(y) \end{pmatrix}\colon  S^1\times S^1\longrightarrow S^1\times S^1, \end{align}
	where $0\leq b(y)<a(y)$ and $c(y)>0$; and 
	\item the induced maps $\bm{m}(y)_\ast$ and $(J\,\bm{m}(\overline{y}))_\ast$ from $\pi_1(S^1\times S^1)$ to $\pi_1(S^1\times S^1)$ have the same image in $\pi_1(S^1\times S^1)\cong\ZZ\oplus\ZZ$. 
\end{enumerate}

\begin{rem}
The above definition of $\bm{m}(y)$ is different from $m(y)$ in \cite{hironaka2000}. 
This difference comes from the difference of trivializations for boundary components of $S^1$-bundles as follows:
For an $S^1$-bundle $p\colon M\to B$ over a $2$-manifold $B$ with boundary, 
each boundary $T_i\subset \partial M$ is trivialized by a homeomorphism $\tau_i\colon S^1\times S^1\to T_i$ such that $p\circ\tau_i$ is the projection onto the second component in \cite{hironaka2000}, but onto the first one in this note according to \cite{neumann1981}, i.e., $\bm{m}(y)=Jm(y)J$. 
Hence $a(y), b(y)$ and $c(y)$ in this note play the same role of those in \cite{hironaka2000}. 
\end{rem}

Given a modified plumbing graph $\Gamma_\plumb^\modi=(\Gamma,\bm{g},\bm{e},\bm{m})$, an associated graph manifold $\PM(\Gamma_\plumb^\modi)$ is defined as follows: 
For each $v\in\mcV(\Gamma)$, let $B_v$ be a compact $2$-manifold of genus $\bm{g}(v)$ with boundary components, labeled by $y\in\mcY(\Gamma)$ with $t(y)=v$, 
and let $p_v\colon M_v\to B_v$ be an $S^1$-bundle, where $M_v$ is an oriented $3$-manifold with trivializations at boundary components such that the {\red Euler} number of $p_v$ is $\bm{e}(v)$. 
For each $y\in\mcY(\Gamma)$, let $T_y\subset M_{t(y)}$ be the boundary component associated to $y$ with trivialization $\tau_y\colon S^1\times S^1\to T_y$, and 
put $R_y:=\bm{m}(y)^{-1} J \bm{m}(\overline{y})$. 
By the condition (ii) of $\bm{m}$, we have $R_y\in\GL(2,\ZZ)$, $\det R_y=-1$, and $R_y$ represent homeomorphisms $T_{\overline{y}}\to T_y$: 
\[ \begin{tikzpicture}
	\node (b1) at (0,0) {$S^1\times S^1$};
	\node (b2) at (2.5,0) {$S^1\times S^1$};
	\node (a1) at (-2.5,1.3) {$T_{\overline{y}}$};
	\node (a2) at (5,1.3) {$T_y$};
	\node (c1) at (0,1.3) {$S^1\times S^1$};
	\node (c2) at (2.5,1.3) {$S^1\times S^1$};
	\draw [->] (a1) -- node [above] {\footnotesize $\tau_{\overline{y}}^{-1}$} (c1);
	\draw [->] (c2) -- node [above] {\footnotesize $\tau_{y}$} (a2);
	\draw [->] (c1) -- node [above] {\footnotesize $R_{y}$} (c2);
	\draw [->] (b1) -- node [above] {\footnotesize $J$} (b2);
	\draw [->] (c1) -- node [left] {\footnotesize $\bm{m}(\overline{y})$} (b1);
	\draw [->] (c2) -- node [right] {\footnotesize $\bm{m}(y)$} (b2);
\end{tikzpicture} \]
The manifold $\PM(\Gamma_\plumb^\modi)$ is obtained by gluing $M_{t(y)}$ and $M_{t(\overline{y})}$ by $R_y$.

\begin{rem}
For $y\in\mcY(\Gamma)$, $\bm{m}(y)\colon S^1\times S^1\to S^1\times S^1$ coincides with the map induced by the map
\[ \CC^2\setminus\{u_1v_1\ne0\}\ni (u_1,v_1)\longmapsto(u_1^{c(y)},u_1^{b(y)}v_1^{a(y)})=(u_2,v_2)\in\CC^2\setminus\{u_2v_2\ne0\}\] 
by regarding $S^1\times S^1$ as $\{|u_i|=|v_i|=1\}$. 
This map appears in \cite[Chapter 2]{laufer1971} to construct a resolution of singularities of $2$-dimensional analytic spaces. 
Note that the dual graph of the exceptional divisor on the resolution is a chain, and $\PM(\Gamma_\plumb^\modi)$ can be constructed as the plumbed manifold $\PM(\Gamma_\plumb)$ of $\Gamma_\plumb$, where $\Gamma_\plumb$ is the plumbing graph constructed by replacing each edge of $\Gamma_\plumb^\modi$ with the chain of the exceptional divisor. 
By the resolution in \cite{laufer1971}, we can check that the basis of $H_1(T_y,\ZZ)$ giving the matrix (\ref{eq:modi_matrix}) corresponding to the basis given by~$\Gamma_\plumb$. 
\end{rem}

\section{Combinatorial type of plane curves}\label{sec:comb}

In this section, we prove Fact~\ref{fact:comb} and Theorem~\ref{thm:meridian}. 
We first recall the combinatorial type of a plane curve. 
The combinatorial type is defined in \cite{act2008} as the data consisting of the number of irreducible components, the degrees and singularities of irreducible components, and configuration of components. 
In \cite{abst2023}, it is redefined by using the resolution graph of plane curves, which we explain below. 

\subsection{Combinatorical type}\label{subsec:comb}
Let $\mcC\subset\PP^2$ be a plane curve of degree $d$, and let $\Irr(\mcC)$ denote the set of irreducible components of $\mcC$. 
Let $\sigma\colon Y\to\PP^2$ be the minimal good embedded resolution of the singularities $\Sing(\mcC)$ of $\mcC$, i.e., the minimal sequence of blowing-ups which {\red resolves} $\Sing(\mcC)$ and such that $\sigma^{-1}(\mcC)$ is a simple normal crossing divisor. 
Put 
\[ {\mcC'}:=\sigma^{-1}(\mcC).  \]
In \cite{abst2023}, the \textit{combinatorial type} (or the \textit{combinatorics} for short) $\Comb(\mcC)$ of the plane curve $\mcC$ is redefined as the $3$-tuple $\Comb(\mcC):=(\Gamma_\mcC, \Str_\mcC, \bm{e}_\mcC)$, where 
\begin{itemize}
	\item $\Gamma_\mcC$ is the dual graph of $\red \mcC'$, 
	\item $\Str_\mcC\subset\mcV(\Gamma_\mcC)$ is the set of vertices corresponding to the strict transforms of irreducible components of $\mcC$, and 
	\item $\bm{e}_\mcC$ is the map $\bm{e}_\mcC\colon \mcV(\Gamma_\mcC)\to\ZZ$ of self-intersection numbers. 
\end{itemize}
Since ${\mcC'}$ is simple normal crossing, each edge {\red of} $\Gamma_\mcC$ is not a loop. 
We can identify the sets $\Irr({\mcC'})$ and $\Irr(\mcC)$ of irreducible components with $\mcV(\Gamma_\mcC)$ and $\Str_\mcC$, respectively, 
\begin{align*} 
\Irr({\mcC'})&=\mcV(\Gamma_\mcC), &
\Irr(\mcC)&=\Str_\mcC. 
\end{align*}
Each node $P\in\Sing({\mcC'})$ corresponds to a pair of edges $\{y, \overline{y}\}$ for some $y\in\mcY(\Gamma_\mcC)$. 
For each $v\in\mcV(\Gamma_\mcC)$, let $\red D'_v$ denote the irreducible component of ${\mcC'}$ corresponding to $v$. 
By contracting exceptional components of $\sigma$, we can compute the degrees of irreducible components of $\mcC$ and the map 
$\bm{g}_\mcC\colon \mcV(\Gamma_\mcC)\to\ZZ$ 
of genera from $\Comb(\mcC)$. 
Hence the combinatorics $\Comb(\mcC)$ can be regarded as the plumbing graph 
\[ \Comb(\mcC)=(\Gamma_\mcC, \bm{g}_\mcC,\bm{e}_\mcC; \Str_\mcC) \]
together with the marking $\Str_\mcC\subset\mcV(\Gamma_\mcC)$. 
Let $\PM(\mcC)$ denote the plumbed manifold associated with $\Comb(\mcC)$:
\[ \PM(\mcC):=\PM(\Gamma_\mcC,\bm{g}_\mcC,\bm{e}_\mcC). \]
For two plane curves $\mcC_1, \mcC_2\subset\PP^2$, an \textit{equivalence map} $\varphi\colon \Comb(\mcC_1)\to\Comb(\mcC_2)$ is an isomorphism $\varphi\colon \Gamma_{\mcC_1}\to\Gamma_{\mcC_2}$ of graphs such that $\varphi(\Str_{\mcC_1})=\Str_{\mcC_2}$, $\bm{g}_{\mcC_1}=\bm{g}_{\mcC_2}\circ\varphi$ and $\bm{e}_{\mcC_1}=\bm{e}_{\mcC_2}\circ\varphi$. 
We say that $\mcC_1$ and $\mcC_2$ \textit{have the same combinatorics} if there is an equivalence map $\varphi\colon \Comb(\mcC_1)\to\Comb(\mcC_2)$. 

\subsection{The boundary manifold $\BM(\mcC)$}\label{subsec:reg_nbd}

According to \cite{durfee_1983}, we define an algebraic neighborhood $\mcT(\mcC)$ of a plane curve $\mcC\subset\PP^2$ as follows:
Let $f\in\CC[x_0,x_1,x_2]$ be a homogeneous polynomial of degree $d$ defining $\mcC$. 
Let $\eta_f\colon \PP^2\to\RR$ be the map defined by
\[ \eta_f(x_0,x_1,x_2):=
\frac{|f(x_0,x_1,x_2)|^2}{(|x_0|^2+|x_1|^2+|x_2|^2)^d}\in\RR  \]
for $(x_0:x_1:x_2)\in\PP^2$. 
Let $\mcT(\mcC):=\{(x_0:x_1:x_2)\in\PP^2\mid \eta_f(x_0,x_1,x_2)\leq\delta\}$  for sufficiently small $\delta>0$, which is called an \textit{algebraic neighborhood} of $\mcC$. 
By \cite{lojasiewicz1964}, $\mcT(\mcC)$ is a regular neighborhood of $\mcC$. 
By \cite{durfee_1983}, $\mcT(\mcC)$ is unique up to isotopy as an algebraic neighborhood. 
However, the author is not sure whether $\mcT(\mcC)$ is unique as a regular neighborhood.  
Let $\BM(\mcC)$ be the boundary of $\mcT(\mcC)$,
$ \BM(\mcC):=\partial\mcT(\mcC)$, 
called the \textit{boundary manifold} of $\mcC$. 

\begin{rem}\label{rem:isotopy_reg_nbd}
	{\red For any regular neighborhood $U(\mcC)$ of $\mcC$, we may assume that $\mcT(\mcC)\subset U(\mcC)$ by taking sufficiently small $\delta>0$}. 
\end{rem}

Let $\sigma\colon Y\to\PP^2$ be the minimal good embedded resolution of $\mcC$, and put $\mcC':=\sigma^{-1}(\mcC)$. 
Since ${\red \eta_f}\circ\sigma\colon Y\to\RR$ is admissible in the sense of \cite{mumford1961}, 
we can check that $\sigma^{-1}(\mcT({\mcC}))$ is given by plumbing tubular neighborhoods $\mcT({\red D'})$ of irreducible components ${\red D'}$ of ${\mcC'}$ (cf.\ \cite[Section~8]{HNK1971} and \cite[Section~1]{mumford1961}). 
Note that, for each irreducible component ${\red D'}\subset{\mcC'}$, the {\red Euler} number of the $S^1$-bundle $\red \overline{p}_D\colon \partial\mcT(D')\to D'$ is equal to the self-intersection number of ${\red D'}$. 
{\red 
From this point of view, $\BM(\mcC)$ is constructed from $\Comb(\mcC)$ as follows. 
For $v\in\mcV(\Gamma_\mcC)$, put 
\begin{align*}\label{eq:S^1-bdls mcC} 
	B_v&:=D'_v\setminus U(\Sing(\mcC'))^\circ, 
	& 
	M_v&:=\overline{p}_{D_v}^{-1}(B_v), 
	&
	p_v&:=\overline{p}_{D_v}|_{M_v}\colon M_v\longrightarrow B_v. 	
\end{align*}
For $y\in\mcY(\Gamma_\mcC)$, let $\partial B_{y}$ be the connected component of $\partial B_{t(y)}$ around the node $P_y$ of $\mcC'$ corresponding to $y$, and put $T_y:=p_{t(y)}^{-1}(\partial B_{y})$. 
We may assume that $\mcC'$ and $D'_{t(y)}$ are locally defined at $P_y$ by $z_1z_2=0$ and $z_2=0$, respectively, in $\DD^2\times \DD^2$ with coordinates $(z_1,z_2)$, 
and that $T_y$ is a torus given by $|z_1|=|z_2|=\delta$ for small $\delta>0$. 
Let 
$\tau_y\colon S^1\times S^1\to T_y$	
be the trivialization given by $(z_1,z_2)\mapsto (\delta z_1,\delta z_2)$. 
Then $\BM(\mcC)$ is constructed by gluing $M_{t(y)}$ and $M_{t(\overline{y})}$ by (\ref{eq:gluing torus}) with $\sign(y)=+1$. 
Hence $\BM(\mcC)$ is a graph manifold with graph structure $\TT_\mcC:=\bigcup_y T_y$, where $T_{\overline{y}}$ is identified with $T_y$ in $\BM(\mcC)$ for each $y\in\mcY(\Gamma_\mcC)$. 
We call $\TT_\mcC\subset\BM(\mcC)$ the \textit{graph structure with respect to $\Comb(\mcC)$}. 
}
It follows from the definition of $\PM(\mcC)$ that $\BM(\mcC)$ is homeomorphic to $\PM(\mcC)$. 
Therefore, if two plane curves $\mcC_1,\mcC_2\subset\PP^2$ have the same combinatorics, then there is a homeomorphism $h\colon\mcT(\mcC_1)\to\mcT(\mcC_2)$ with $h(\mcC_1)=\mcC_2$. 

\subsection{Embedded topology in regular neighborhoods}

Let $\mcC_1,\mcC_2\subset\PP^2$ be two plane curves, and let $\sigma_i\colon Y_i\to\PP^2$ be the minimal good embedded resolution of $\mcC_i$. 
We put ${\mcC}'_{i}:=\sigma_i^{-1}(\mcC_i)$, and identify the combinatorics $\Comb(\mcC_i)$ with the corresponding plumbing graph, write 
\[ \Comb(\mcC_i)=(\Gamma_i,\bm{g}_i,\bm{e}_i;\Str_i). \]
For $v_i\in\Str_i$ (resp. $v_i\in\mcV(\Gamma_i)$), let $\red D_{v_i}$ (resp. $\red{D}'_{v_i}$) be the irreducible component of $\mcC_i$ (resp. ${\mcC}'_i$) corresponding to $v_i$. 
{\red 
For $v_i\in\mcV(\Gamma_i)$ and $y_i\in\mcY(\Gamma_i)$, let $M_{v_i}$, $B_{v_i}$, $p_{v_i}$, $T_{y_i}$ and $\tau_{y_i}$ be as in Subsection~\ref{subsec:reg_nbd}. 
}
Let $\red \TT_i=\bigcup_{y_i} T_{y_i}\subset\BM(\mcC_i)$ be the graph structure with respect to $\Comb(\mcC_i)$. 

Assume that there is a homeomorphism $h\colon U(\mcC_1)\to U(\mcC_2)$ with $h(\mcC_1)=\mcC_2$ for some regular neighborhoods $U(\mcC_i)$ of $\mcC_i$. 
As in the proof of \cite[Theorem~2.2]{artal_cogolludo_martin_2019}, $h$ induces an isomorphism 
\[ \Psi_h^\infty\colon \pi_1(\BM(\mcC_1))\longrightarrow\pi_1(\BM(\mcC_2)). \]
The isomorphism $\Psi_h^\infty$ can be constructed as follows: 
Let $\mcT(\mcC_2)\subset h(U(\mcC_1))$ be an algebraic neighborhood of $\mcC_2$, and let $\mcT(\mcC_1)\subset h^{-1}(\mcT(\mcC_2))$ be an algebraic neighborhood of $\mcC_1$. 
Then there is an algebraic neighborhood $\mcT'(\mcC_2)\subset h(\mcT(\mcC_1))$. 
Since $\BM(\mcC_i)$ is homotopic to $\mcT(\mcC_i)\setminus\mcC_i$, we obtain the homomorphism 
\[ \Psi_h^\infty\colon\pi_1(\BM(\mcC_1))\cong\pi_1(\mcT(\mcC_1)\setminus\mcC_1)\overset{h_\ast}{\longrightarrow}\pi_1(\mcT(\mcC_2)\setminus\mcC_2)\cong\pi_1(\BM(\mcC_2)). \] 
Since we can similarly construct the inverse map of $\Psi_h^\infty$, it is {\red an} isomorphism.  

In \cite{artal_cogolludo_martin_2019}, it is also proved by using \cite[Corollary~6.5]{waldhausen_1968} that there exists a homeomorphism $\Psi_\BM\colon \BM(\mcC_1)\to\BM(\mcC_2)$ with $\Psi_{\BM\,\ast}=\Psi_h^\infty$ in several cases. 
Here, we construct such a homeomorphism $\Psi_\BM\colon \BM(\mcC_1)\to\BM(\mcC_2)$ homotopic to $h|_{\BM(\mcC_1)}$ in general case, and prove that $\Psi_\BM$ maps meridians of irreducible components of ${\mcC}'_1$ to those of ${\mcC}'_2$. 

\begin{rem}\label{rem:comb_smooth}
We can construct a map $h'\colon\BM(\mcC_1)\to\BM(\mcC_2)$ homotopic to $h|_{\BM(\mcC_1)}$, which is homeomorphic outside of small regular neighborhoods of singular points, as follows: 

By \cite[Theorem~21]{brieskorn_knorrer_1986}, $\mcC_1$ and $\mcC_2$ have the same singularity at each $P_1\in\mcC_1$ and $h(P_1)\in\mcC_2$. 
By the isotopy theorem for closed tubular neighborhoods \cite[Theorem~6.5]{hirsch_1976}, we may assume that $h$ maps $\BM(\mcC_1)\setminus U_{\Sing,1}$ to $\BM(\mcC_2)\setminus U_{\Sing,2}$ homeomorphically for certain regular neighborhoods $U_{\Sing,i}:=U(\Sing(\mcC_i))$. 
Let $\mcT_i(\mcC_2)$ ($i=1,2$) be two algebraic neighborhoods of $\mcC_2$ such that $\mcT_1(\mcC_2)\subset h(\BM(\mcC_1))\subset \mcT_2(\mcC_2)$. 
By a certain deformation retract $\mcT_2(\mcC_2)\to\mcT_1(\mcC_2)$, we obtain {\red the} desired map $h'\colon\BM(\mcC_1)\to \BM(\mcC_2)$ with $h'(\BM(\mcC_1)\cap U_{\Sing,1})=\BM(\mcC_2)\cap U_{\Sing,2}$. 

In particular, if $\mcC_1$ is smooth, then $\mcC_2$ is also smooth, and $h'$ is {\red a homeomorphism}. This implies $\deg\mcC_1=\deg\mcC_2$, i.e., $\mcC_1$ and $\mcC_2$ have the same combinatorics. 
Hence we assume that $\mcC_i$ are singular below. 
\end{rem}


Assume that $\mcC_i$ are singular. 
We locally construct a homeomorphism of boundary manifolds around each singular point homotopic to $h$. 
Let $P_1$ be a singular point  of $\mcC_1$, and 
put $P_2:=h(P_1)$. 
Let $U(P_i)\subset\PP^2$ be a regular neighborhood of $P_i$ for each $i=1,2$. 
We may assume that   
\begin{align*}\label{eq:local_tb_nbd}
\BM(\mcC_i,P_i):=\BM(\mcC_i)\cap U(P_i) \subset \BM(\mcC_i)\setminus\bigcup_{v_i\in\Str_i} {\red M_{v_i}^\circ}
\end{align*}
 is a connected component of the complement of the union of $\{{\red M_{v_i}^\circ}\mid v_i\in\Str_i\}$ in $\BM(\mcC_i)$. 
We can regard $\BM(\mcC_i,P_i)$ as a boundary framed graph manifold associated with the decorated plumbing graph 
\[ \Gamma_\dplumb(\mcC_i,P_i)=(\Gamma_{i,P_i}, \bm{g}_{i,P_i}, \bm{e}_{i,P_i};\Ver,  \AH) \] 
defined as follows: 
If $P_i\in\mcC_i$ is not {\red a node at which two distinct components intersect}, then we define $\Gamma_\dplumb(\mcC_i,P_i)$ by the following steps;
\begin{enumerate}[label=(\Roman*)]
	\item let $\Ver(\Gamma_{i,P_i})$ be the subset of $\mcV(\Gamma_i)$ consisting of vertices $v_i$ with $\sigma_i({\red D'_{v_i}})=\{P_i\}$;  
	\item $\mcY(\Gamma_{i,P_i})$ is the subset of $\mcY(\Gamma_i)$ consisting of edges $y_i$ with either $t(y_i)\in \Ver(\Gamma_{i,P_i})$ or $o(y_i)\in \Ver(\Gamma_{i,P_i})$; 
	\item {\red $\AH(\Gamma_{i,P_i})$ is in bijection with the set of edges $y_i\in \mcY(\Gamma_{i,P_i})$ with $t(y_i)\notin \Ver(\Gamma_{i,P_i})$ as edges of $\Gamma_i$; let $v_{i,y_i}$ be the vertex of $\AH(\Gamma_{i,P_i})$ corresponding to $y_i$;}
	\item for each $y_i\in\mcY(\Gamma_{i,P_i})$, let $t(y_i)$ be the same terminal vertex $t(y_i)$ of $y_i\in\mcY(\Gamma_i)$ if $t(y_i)\in \Ver(\Gamma_{i,P_i})$ as {\red a vertex} of $\Gamma_i$; and 
	put $t(y_i):=v_{i,y_i}\in\AH(\Gamma_{i,P_i})$ {\red otherwise}; 
	\item for each $y_i\in\mcY(\Gamma_{i,P_i})$, put $o(y_i):=t(\overline{y}_i)$ as edges of $\Gamma_{i,P_i}$; 
	\item $\bm{g}_{i,P_i}, \bm{e}_{i,P_i}\colon \Ver(\Gamma_{i,P_i})\to\ZZ$ are the restrictions of $\bm{g}_i$ and $\bm{e}_i$, respectively.
\end{enumerate}
Note that $\Gamma_{\dplumb}(\mcC_i,P_i)$ corresponds to the dual graph of the minimal good embedded resolution of the singularity $P_i\in\mcC_i$ locally. 
Hence $\Gamma_{i,P_i}$ has no cycle. 

According to \cite[Chapter~V]{eisenbud_1986}, if $P_i\in\mcC_i$ is a node at which two distinct components intersect, we take the blowing-up at $P_i$; replace {\red $\Comb(\mcC_i)$ with the plumbing graph $\Gamma_{\plumb}(\mcC_i)$ given by the blowing-up (${\rm R1}^+$) of $\Comb(\mcC_i)$ at the edge corresponding to $P_i$}; and define $\Gamma_\dplumb(\mcC_i,P_i)$ as above. 
Namely, if $P_i$ is a node of $\mcC_i$, then $\Gamma_\dplumb(\mcC_i,P_i)$ is the following decorated plumbing graph:
\begin{align*}\label{graph:smooth_node}
\begin{aligned}
\begin{tikzpicture} 
\draw[-{Stealth[length=8pt]}] (0,0) -- (1,0); 
\draw [-{Stealth[length=8pt]}] (0,0) -- (-1,0);
\draw[fill] (0,0) circle [radius=2pt] node [above] {$-1$}; 
\end{tikzpicture}
\end{aligned}
 \end{align*}

Let $\TT_{i,P_i}$ be the graph structure of $\BM(\mcC_i,P_i)$ with respect to $\Gamma_\dplumb(\mcC_i,P_i)$. 
We denote the F-normal form of $\Gamma_\dplumb(\mcC_i,P_i)$ by $\Gamma_\dplumb^\norm(\mcC_i,P_i)$. 
For $v_i\in\Ver(\Gamma_{i,P_i})$ and $y_i\in\mcY(\Gamma_{i,P_i})$, we may define $\red p_{v_i}\colon M_{v_i}\to B_{v_i}$, $\red T_{y_i}$ and $\red \tau_{y_i}\colon S^1\times S^1\to T_{y_i}$ as the same things for $\Gamma_{\plumb}(\mcC_i)$. 
Let $\TT_{i.P_i}\subset\BM(\mcC_i,P_i)$ be the graph structure with respect to $\Gamma_{\dplumb}(\mcC_i,P_i)$. 

Note that $\AH(\Gamma_{i,P_i})$ corresponds to the set of local branches of $\mcC_i$ at $P_i$. 
Hence $h$ induces a bijection 
\[ \xi_{h,P_1}\colon\AH(\Gamma_{1,P_1})\longrightarrow\AH(\Gamma_{2,P_2}) \] 
since $h(\mcC_1)=\mcC_2$. 
Let $h_{P_1}\colon\BM(\mcC_1,P_1)\to\BM(\mcC_2,P_2)$ be the restriction of $h'\colon\BM(\mcC_1)\to\BM(\mcC_2)$ constructed in Remark~\ref{rem:comb_smooth}. 
We have the induced isomorphism
\[ h_{P_1\ast}\colon \pi_1(\BM(\mcC_1,P_1))\longrightarrow\pi_1(\BM(\mcC_2,P_2)) \]
of fundamental groups $\pi_1(\BM(\mcC_i,P_i))$ such that $h_{P_1\ast}(i_{y_1\ast}(\pi_1({\red T_{y_1}})))=i_{y_2\ast}(\pi_1({\red T_{y_2}}))$ for each $y_i\in\mcY(\Gamma_{i,P_i})$ with $t(y_1)\in\AH(\Gamma_{1,P_1})$ and $t(y_2)=\xi_{h,P_1}(t(y_1))$, where $i_{y_i}\colon {\red T_{y_i}}\to\BM(\mcC_i,P_i)$ is the inclusion map. 
Note that it follows from \cite[III, Theorem~21]{brieskorn_knorrer_1986} that there exists an isomorphism 
\begin{align}\label{eq:isom_dplumb} 
\varphi_{P_1}\colon\Gamma_{\dplumb}(\mcC_1,P_1)\longrightarrow \Gamma_\dplumb(\mcC_2,P_2) 
\end{align} 
satisfying $\varphi_{P_1}|_{\AH(\Gamma_{1,P_1})}=\xi_{h,P_1}$, and hence $\BM(\mcC_i,P_i)$ are homeomorphic. 
\begin{rem}\label{rem:fact_comb}
We can prove from $\varphi_{P_1}$ for each $P_1\in\Sing(\mcC_1)$ that $\mcC_1$ and $\mcC_2$ have the same combinatorics if they have the same embedded topology in regular neighborhoods. 
Indeed, if $v_i\in\Str_i$ satisfy $\red h(D_{v_1})=D_{v_2}$, then we have $\bm{g}_1(v_1)=\bm{g}_2(v_2)$, and 
\[ \bm{g}_i(v_i)=\frac{(d_i-1)(d_i-2)}{2}-{\red \sum_{P_i\in\Sing(D_{v_i})}\delta_{D_{v_i},P_i}},   \]
where $d_i:=\deg {\red D_{v_i}}$, and $\delta_{\red D_{v_i},P_i}\in\ZZ$ is the delta invariant of $\red D_{v_i}$ at $P_i\in\Sing({\red D_{v_i}})$. 
The existence of $\varphi_{P_1}$ implies $\sum\delta_{\red D_{v_1},P_1}=\sum\delta_{\red D_{v_2},P_2}$. 
If $d_1\leq 2$, then we obtain $d_1=d_2$ by Remark~\ref{rem:comb_smooth}. 
If $d_1>2$, then $d_1=d_2$ follows from $d_i>0$. 
Hence we have $\bm{e}_1(v_1)=\bm{e}_2(v_2)$ by $d_1=d_2$ and the existence of $\varphi_{P_1}$. 
Therefore, we can construct an equivalence map $\varphi\colon\Comb(\mcC_1)\to\Comb(\mcC_2)$. 
The arguments in Subsection~\ref{subsec:reg_nbd} and this remark prove Fact~\ref{fact:comb}. 
\end{rem}

We here construct a homeomorphism $\gtg_{P_1}\colon\BM(\mcC_1,P_1)\to\BM(\mcC_2,P_2)$ which is homotopic to $h_{P_1}$, and satisfies certain properties. 
Put $E:=(\begin{smallmatrix}	1&0\\0&1 \end{smallmatrix})$, the identity matrix. 

\begin{lem}\label{lem:Homeo_smooth_node}
Assume that $P_1$ is a node of $\mcC_1$. 
Then $P_2$ is a node of $\mcC_2$, and there exists a homeomorphism $\gtg_{P_1}\colon\BM(\mcC_1,P_1)\to\BM(\mcC_2,P_2)$ such that 
\begin{enumerate}
	\item $\gtg_{P_1}$ is homotopic to $h_{P_1}$;
	\item $\gtg_{P_1}{\red (T_{y_1})=T_{y_2}}$ for $y_i\in\mcY(\Gamma_{i,P_i})$ with $t(y_i)\in\AH(\Gamma_{i,P_i})$ and $t(y_2)=\xi_{h,P_1}(t(y_1))$; 
	\item $\gtg_{P_1}$ preserves trivializations of the boundaries, i.e., for $y_i\in\mcY(\Gamma_{i,P_i})$ with $t(y_i)\in\AH(\Gamma_{i,P_i})$ and $t(y_2)=\xi_{h,P_1}(t(y_1))$, ${\red \tau_{y_2}^{-1}}\circ\gtg_{P_1}\circ{\red \tau_{y_1}}\colon S^1\times S^1\to S^1\times S^1$ is given by a matrix $\alpha E$ for some $\alpha=\pm1$. 
	\item for $v_i\in\Ver(\Gamma_{i,P_i})$, $\gtg_{P_1}$ preserves fibers of $v_1$ and $v_2$, i.e., for any fiber $F_1$ of $\red p_{v_1}$, $\gtg_{P_1}(F_1)$ is a fiber of $\red p_{v_2}$; 
\end{enumerate}
Moreover, $\alpha=1$ if and only if $h$ maps a meridian of $\red D_{t(y_1)}$ to a meridian of $\red D_{t(y_2)}$ for some $y_i\in\mcY(\Gamma_{i,P_i})$ with $t(y_2)=\xi_{h,P_1}(t(y_1))$,
where arrowheads in $\AH(\Gamma_{i,P_i})$ are regarded as vertices in $\Str_{i}$. 
\end{lem}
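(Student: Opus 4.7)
The plan is to exhibit a standard candidate homeomorphism built from holomorphic local coordinates, verify properties (ii)--(iv) by construction, and establish property (i) by comparing the $\pi_1$-actions of the candidate and of $h_{P_1}$. First, because the decorated plumbing graph of a node (a single vertex of weight $-1$ with two arrowheads) is not shared with any other singularity type, the isomorphism $\varphi_{P_1}$ from \eqref{eq:isom_dplumb} immediately forces $P_2=h(P_1)$ to be a node of $\mcC_2$ as well.

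Next, I would fix holomorphic local coordinates $(u_i,v_i)$ on a bidisc neighborhood of $P_i$ so that $\mcC_i$ is the zero locus of $u_iv_i$. Up to isotopy, $\BM(\mcC_i,P_i)$ becomes the boundary of a standard Milnor-type neighborhood of $\{u_iv_i=0\}$, and the two boundary tori correspond to the two local branches. Depending on whether $\xi_{h,P_1}$ preserves or swaps the branches, I would consider the four candidate homeomorphisms given on the local model by $(u_1,v_1)\mapsto(u_2,v_2)$ or $(u_1,v_1)\mapsto(\bar u_2,\bar v_2)$ (and their swapped variants). By direct inspection each candidate carries the branches according to $\xi_{h,P_1}$, sends the boundary torus $T_{1,y_1}$ onto $T_{2,y_2}$ with trivialization-matrix $\alpha E$ for $\alpha\in\{\pm 1\}$, and preserves the fibers of the central $S^1$-bundle as well as of the arrowhead pieces; so (ii), (iii), (iv) hold for every candidate by construction.

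The core of the argument is to select the candidate that realizes property (i). The boundary manifold $\BM(\mcC_i,P_i)$ deformation retracts onto the linking torus of the node, so it is a $K(\ZZ^2,1)$ whose fundamental group is generated by the two branch meridians $m_{i,1},m_{i,2}$. The induced map $h_{P_1\ast}$ sends each $m_{1,k}$ to $m_{2,\xi_{h,P_1}(k)}^{\epsilon_k}$ for some $\epsilon_k\in\{\pm1\}$. Using that $h$ is a single local homeomorphism at $P_1$ (rather than two independent maps on meridian disks) and that the complex orientation of $\PP^2$ induces a consistent orientation on $\BM(\mcC_i,P_i)$, I would argue that $\epsilon_1=\epsilon_2=:\alpha$: the linking loop on the exceptional divisor in the blowup couples the two meridian factors, so $h_{P_1}$ is either orientation-preserving near $P_1$ (both $\epsilon_k=+1$) or orientation-reversing (both $\epsilon_k=-1$). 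Exactly one of the four candidates then induces the same map on $\pi_1$ as $h_{P_1}$; call it $\gtg_{P_1}$. Since both $\gtg_{P_1}$ and $h_{P_1}$ are maps between $K(\ZZ^2,1)$-spaces inducing the same $\pi_1$-homomorphism, standard obstruction theory shows they are homotopic, giving (i).

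The final claim about $\alpha$ is then immediate: by our convention $p\circ\tau_{i,y_i}$ is the projection onto the first $S^1$-factor, so the first coordinate of $\tau_{i,y_i}$ is the fiber of $M_{i,t(y_i)}$, which in $T_{i,y_i}$ is precisely the meridian of $C_{i,t(y_i)}$; the matrix $\alpha E$ therefore sends this meridian to its $\alpha$-th power, proving $\alpha=1$ iff $h$ maps meridian to meridian. The main obstacle I anticipate is proving the equality $\epsilon_1=\epsilon_2$ cleanly; the substance lies in translating the local topological constraint "$h$ is a single homeomorphism at a node" into the algebraic constraint that the two sign choices on the meridian generators must agree, and one likely cannot avoid invoking the local orientation behavior of $h$ or, equivalently, the standard classification of self-homeomorphisms of a nodal singularity germ.
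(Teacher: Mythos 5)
Your overall strategy coincides with the paper's: show $P_2$ is a node, build a model homeomorphism realizing a \emph{single} sign $\alpha$ on both branch meridians (which gives (ii)--(iv) essentially for free), and then upgrade ``same $\pi_1$-map'' to ``homotopic'' using that $\BM(\mcC_i,P_i)\simeq I\times S^1\times S^1$ is aspherical (the paper invokes Olum's theorem where you invoke obstruction theory for $K(\ZZ^2,1)$'s). The one substantive point, however, is exactly the one you flag and do not close: why $\epsilon_1=\epsilon_2$. The paper does not prove this either from first principles; it quotes \cite[Proposition~3.4 and Remark~3.5]{abst2023}, which is precisely the statement that a homeomorphism of pairs of (neighborhoods of) plane curves carries all meridians to conjugates of meridians or to conjugates of anti-meridians with one uniform sign.

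The specific mechanism you propose for this step would not work as stated. You argue that ``the linking loop on the exceptional divisor couples the two meridian factors,'' i.e.\ that the relation $\gtf_1=\gtm_{1,0}\gtm_{1,1}$ forces the two signs to agree. But that relation only constrains $h_{P_1\ast}$ if you already know that $h_{P_1\ast}$ sends the fiber class $\gtf_1$ of the exceptional $S^1$-bundle to $\gtf_2^{\pm1}$ --- and for the piece $\BM(\mcC_i,P_i)\cong I\times S^1\times S^1$ the $S^1$-fibration is far from unique, so the fiber class is not topologically distinguished. Concretely, $\mathrm{diag}(1,-1)\in\GL(2,\ZZ)$ induces a self-homeomorphism of $I\times S^1\times S^1$ preserving each boundary torus and each $\ZZ\gtm_{1,j}$ while sending $\gtm_{1,0}\mapsto\gtm_{2,0}$ and $\gtm_{1,1}\mapsto\gtm_{2,1}^{-1}$; nothing intrinsic to the $3$-manifold rules it out. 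The uniform sign has to come from the fact that $h_{P_1}$ is the restriction of a homeomorphism of $4$-dimensional germs of pairs, so that the orientations of the meridians (determined by the complex orientations of the transverse disks, i.e.\ by linking numbers with the branches in the oriented ambient $4$-manifold) are all twisted by the same global datum. That is the content of the cited proposition; your closing sentence gestures at it correctly, but until that input is supplied the central claim $\epsilon_1=\epsilon_2$, and with it part (i) and the ``Moreover'' clause, remains unproved. (A minor additional remark: with the paper's convention $p\circ\tau_i(t_1,t_2)=t_1$, the fiber direction is the \emph{second} $S^1$-factor, not the first; this does not affect your conclusion about $\alpha$ since $\alpha E$ acts by $\alpha$ on both factors.)
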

\begin{proof}
By \cite[III, Theorem~21]{brieskorn_knorrer_1986}, $P_2$ is also a node of $\mcC_2$. 
We may identify $\BM(\mcC_i,P_i)$ with $I\times S^1\times S^1$ so that $h_{P_1}(\{j\}\times S^1\times  S^1)=\{j\}\times S^1\times S^1$ for each $j\in\partial I$. 
Let $v_i\in\Ver(\Gamma_{i,P_i})$ be the unique vertex for each $i=1,2$. 
For $i=1,2$ and $j\in \partial I$, let $y_{ij}\in\mcY(\Gamma_{i,P_i})$ be the edge satisfying $w_{ij}:=t(y_{ij})\in\AH(\Gamma_{i,P_i})$ and ${\red T_{y_{ij}}}=\{j\}\times S^1\times S^1$. 
The trivialization $\tau_{ij}:={\red \tau_{y_{ij}}}$ is determined by a meridian $\gtm_{ij}$ of $\red D_{w_{ij}}$ and a fiber $\gtf_{i}$ of $\red p_{v_i}\colon M_{v_i}\to B_{v_i}$. 
It follows from $\bm{e}_{i,P_i}(v_i)=-1$ that $\gtf_i=\gtm_{i0}\gtm_{i1}$ in $\pi_1(\BM(\mcC_i,P_i))$. 
By \cite[Proposition~3.4 and Remark~3.5]{abst2023}, there is $\alpha\in\{\pm1\}$ such that $h_{P_1\ast}(\gtm_{1j})=\gtm_{2j}^\alpha$ for each $j\in\partial I$. 
Since $\bm{e}_1(v_1)=\bm{e}_2(v_2)=-1$, there is a homeomorphism $\gtg_{P_1}\colon\BM(\mcC_1,P_1)\to\BM(\mcC_2,P_2)$ preserving fibers and $\gtg_{P_1\ast}(\gtm_{1j})=\gtm_{2j}^\alpha$ for $j\in\partial I$. 
Since $h_{P_1\ast}=\gtg_{P_1\ast}$, and the universal covering of a torus is not compact, $h_{P_1}$ is homotopic to $\gtg_{P_1}$ by \cite[Theorem~I]{olum1953}. 
\end{proof}

Note that the homeomorphism $\gtg_{P_1}$ in Lemma~\ref{lem:Homeo_smooth_node} is orientation-preserving by the condition (iii). 
We consider the case where $P_1$ is not a node of $\mcC_1$. 

\begin{lem}[{cf.\ \cite[Theorem~8.2]{neumann1981}}]\label{lem:Fnormal}
If $P_i\in\mcC_i$ is a singular point which is not node, then the F-normal form $\Gamma_\dplumb^\norm(\mcC_i,P_i)$ is obtained from $\Gamma_\dplumb(\mcC_i,P_i)$ by applying the following operations to $\Gamma_\dplumb(\mcC_i,P_i)$ wherever possible: 
\[ \begin{tikzpicture}[scale=0.7]
	\coordinate (a1) at (-1,2.5);
	\coordinate (a2) at (0.5,2.5);
	\coordinate (a3) at (1.5,3);
	\coordinate (a4) at (1.5,2);
	\coordinate (a11) at (-2,2.8);
	\coordinate (a12) at (-2,2.2);
	\coordinate (a13) at (-2.5,2.95);
	\coordinate (a14) at (-2.5,2.05);
	
	\draw [fill] (a1) circle [radius=2pt] node [above=2pt] {\footnotesize $e_i$} node [below=2.5pt] {\footnotesize $[g_i]$};
	\draw [fill] (a2) circle [radius=2pt] node [above=2pt] {\footnotesize $e$};
	\draw [fill] (a3) circle [radius=2pt] node [right=2pt] {\footnotesize $-2$};
	\draw [fill] (a4) circle [radius=2pt] node [right=2pt] {\footnotesize $-2$};
	\draw (a1) -- (a11);
	\draw (a1) -- (a12);
	\draw (a3) -- (a2) -- (a4);
	\draw[dashed] (a11) -- (a13);
	\draw[dashed] (a12) -- (a14);
	\draw [fill] (-1.9,2.5) circle [radius=0.5pt];
	\draw [fill] (-1.9,2.35) circle [radius=0.5pt];
	\draw [fill] (-1.9,2.65) circle [radius=0.5pt];
	\draw 
	(a1) -- (a2);
	
	\draw [->] (3,2.5) -- (4,2.5);
	
	\coordinate (a1) at (6,2.5);
	\coordinate (a2) at (7.5,2.5);
	\coordinate (a3) at (9,2.5);
	\coordinate (a11) at (5,2.8);
	\coordinate (a12) at (5,2.2);
	\coordinate (a13) at (4.5,2.95);
	\coordinate (a14) at (4.5,2.05);
	
	\draw [fill] (a1) circle [radius=2pt] node [above=2pt] {\footnotesize $e_j$} node [below=2.5pt] {\footnotesize $[g_j]$};
	\draw [fill] (a2) circle [radius=2pt] node [above=2pt] {\footnotesize $e+1$};
	\draw [fill] (a3) circle [radius=2pt] node [above=2pt] {\footnotesize $0$} node [below=2pt] {\footnotesize $[-1]$};
	\draw (a1) -- (a11);
	\draw (a1) -- (a12);
	\draw[dashed] (a11) -- (a13);
	\draw[dashed] (a12) -- (a14);
	\draw [fill] (5.1,2.5) circle [radius=0.5pt];
	\draw [fill] (5.1,2.35) circle [radius=0.5pt];
	\draw [fill] (5.1,2.65) circle [radius=0.5pt];
	\draw 
	(a1) -- (a3);
	
	\node at (11.5,2.5) {$(e\leq -3)$, };


	\coordinate (a1) at (-2,0);
	\coordinate (a2) at (1.5,0);
	\coordinate (a3) at (2.5,0.5);
	\coordinate (a4) at (2.5,-0.5);
	\coordinate (a5) at (-1,0);
	\coordinate (a6) at (0,0);
	\coordinate (a11) at (-3,0.3);
	\coordinate (a12) at (-3,-0.3);
	\coordinate (a13) at (-3.5,0.45);
	\coordinate (a14) at (-3.5,-0.45);
	
	\draw [fill] (a1) circle [radius=2pt] node [above=2pt] {\footnotesize $e_j$} node [below=2.5pt] {\footnotesize $[g_j]$};
	\draw [fill] (a2) circle [radius=2pt] node [above=2pt] {\footnotesize $-2$};
	\draw [fill] (a3) circle [radius=2pt] node [right=2pt] {\footnotesize $-2$};
	\draw [fill] (a4) circle [radius=2pt] node [right=2pt] {\footnotesize $-2$};
	\draw [fill] (a5) circle [radius=2pt] node [above=2pt] {\footnotesize $-2$};
	\draw [fill] (a6) circle [radius=2pt] node [above=2pt] {\footnotesize $-2$};
	\draw (a1) -- (a11);
	\draw (a1) -- (a12);
	\draw (a3) -- (a2) -- (a4);
	\draw[dashed] (a11) -- (a13);
	\draw[dashed] (a12) -- (a14);
	\draw [fill] (-2.9,0) circle [radius=0.5pt];
	\draw [fill] (-2.9,0.15) circle [radius=0.5pt];
	\draw [fill] (-2.9,-0.15) circle [radius=0.5pt];
	\draw 
	(a1) -- (0.3,0);
	\draw [dashed, 
	] (0.3,0) -- (1.2,0);
	\draw 
	(1.2,0) -- (a2);
	\draw (-1.2, -0.2) to [out=345, in=195] (1.7, -0.2);
	\node at (0.25,-0.7) {\footnotesize $b$};
	
	\draw [->] (4,0) -- (5,0);
	
	\coordinate (a1) at (7,0);
	\coordinate (a3) at (9,0);
	\coordinate (a11) at (6,0.3);
	\coordinate (a12) at (6,-0.3);
	\coordinate (a13) at (5.5,0.45);
	\coordinate (a14) at (5.5,-0.45);
	
	\draw [fill] (a1) circle [radius=2pt] node [above=2pt] {\footnotesize $e_j+1$} node [below=2.5pt] {\footnotesize $[g_j]$};
	\draw [fill] (a3) circle [radius=2pt] node [above=2pt] {\footnotesize $b$} node [below=2pt] {\footnotesize $[-1]$};
	\draw (a1) -- (a11);
	\draw (a1) -- (a12);
	\draw 
	(a3) -- (a1);
	\draw[dashed] (a11) -- (a13);
	\draw[dashed] (a12) -- (a14);
	\draw [fill] (6.1,0) circle [radius=0.5pt];
	\draw [fill] (6.1,0.15) circle [radius=0.5pt];
	\draw [fill] (6.1,-0.15) circle [radius=0.5pt];
	
	\node at (12,0) {$\left\{ \begin{array}{l} b\geq 1, \\ b \mbox{ \rm maximal}. \end{array} \right.$ };

\end{tikzpicture} \]
Furthermore, $\Gamma_\dplumb^\norm(\mcC_i,P_i)$ satisfies the {\red following}: 
\begin{enumerate}
	\item All edges of $\Gamma_\dplumb^\norm(\mcC_i,P_i)$ are $(+)$-edges. 
	\item Any vertex $v_i$ of $\Gamma_\dplumb^\norm(\mcC_i,P_i)$ satisfies $\bm{g}(v_i)\geq -1$. 
	\item Any vertex $v_i$ of $\Gamma_\dplumb^\norm(\mcC_i,P_i)$ with $\bm{g}(v_i)=-1$ satisfies $\bm{d}(v_i)=1$, $\bm{e}(v_i)\geq0$, and if $\bm{e}(v_i)=0$, the maximal chain adjoining $v_i$ has length $\geq 1$. 
	\item $\Gamma_\dplumb(\mcC_i,P_i)$ is uniquely determined by $\Gamma_\dplumb^\norm(\mcC_i,P_i)$.  
\end{enumerate}
\end{lem}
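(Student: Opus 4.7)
The plan is to invoke Neumann's reduction theorem \cite[Theorem~8.2]{neumann1981} for decorated plumbing graphs and check that, for the specific class of graphs arising from minimal good embedded resolutions of non-nodal plane curve singularities, only the two listed operations are needed. First I would record the structural features of $\Gamma_\dplumb(\mcC_i, P_i)$ coming from the geometry: it is a tree, every $v \in \Ver(\Gamma_{i,P_i})$ has $\bm{g}(v)=0$ since each exceptional component is rational, every edge is a $(+)$-edge because the transverse intersection of two smooth curves on a complex surface carries a canonical orientation compatible with the chosen trivializations, and minimality of the good embedded resolution forces any vertex with $\bm{e}(v)=-1$ to have $\bm{d}(v) \geq 3$ (otherwise the exceptional curve could be contracted preserving SNC).

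Second, I would run Neumann's F-calculus on this input. Because the graph is a tree with all genera equal to $0$, all edges $(+)$, and every $(-1)$-vertex at least trivalent, the basic blow-down moves $(\mathrm{R1}^\pm_0)$, $(\mathrm{R1}^\pm)$, $(\mathrm{R1}^{++})$ do not apply to any vertex. The only applicable simplifications are $\RR\PP^2$-absorptions, which take precisely the two shapes displayed in the statement: operation~1 packages a vertex of weight $e \leq -3$ carrying two $(-2)$-leaves into a chain ending at a $[-1]$-vertex of self-intersection $0$, and operation~2 packages a maximal $D$-type tail (a chain of $(-2)$'s ending in a fork of two $(-2)$'s) into a single $[-1]$-vertex of positive self-intersection $b$. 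Iterating these moves strictly decreases the number of vertices, so the process terminates and yields an F-normal form.

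Third, I would verify the four listed properties by inspection of the operations. Property (i) holds because both moves introduce no $(-)$-edges. Property (ii) holds because the only genus-changing move is the $\RR\PP^2$-absorption, which sends $\bm{g}=0$ to $\bm{g} = 0 \# (-1) = -1$ and never lower. Property (iii) follows from the local shape of each operation: each newly created $[-1]$-vertex has degree $1$ and self-intersection $\geq 0$, with the case $\bm{e}=0$ (operation~1) automatically yielding a nontrivial adjoining chain through the vertex of weight $e+1$.

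The main obstacle is property (iv), the unique recoverability of $\Gamma_\dplumb(\mcC_i,P_i)$ from $\Gamma_\dplumb^\norm(\mcC_i,P_i)$. For this I would argue that each of the two operations is invertible from the local structure of the F-normal form alone: a $[-1]$-vertex of self-intersection $0$ attached to a vertex of weight $e+1$ can only have been produced by operation~1 with parameter $e$, while a $[-1]$-vertex of self-intersection $b \geq 1$ attached to a vertex of weight $e_j + 1$ must have come from operation~2 with a chain of exactly $b$ vertices of weight $-2$ followed by a $D$-type fork (the maximality condition on $b$ removes any ambiguity in the chain length). Since $\Gamma_\dplumb(\mcC_i,P_i)$ contains no $[-1]$-vertex to begin with, induction on the number of operations performed recovers the original graph uniquely.
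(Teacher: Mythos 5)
Your overall plan (invoke Neumann's Theorem~8.2, record the structural features of the resolution graph, then read off (i)--(iv) from the two operations) matches the paper's, and your verifications of (i)--(iii) and the invertibility argument for (iv) are fine --- indeed more detailed than the paper's one-line ``any vertex of negative genus in the normal form is created by the operations.'' But the central step, the claim that the two displayed operations are all that is needed to reach F-normal form, is where your argument has a genuine gap. First, the two operations are \emph{not} $\RR\PP^2$-absorptions: as the paper's own remark following the lemma records, the first is a composite of $-e-1$ blow-ups $(\mathrm{R1}^-)$, one $\RR\PP^2$-absorption, one blow-up $(\mathrm{R1}^+)$ and $-e-2$ blow-downs, and the second is likewise a composite. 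The move (R2) itself requires a $(-1)$-vertex carrying two $(-2)$-leaves, whereas the relevant vertex in the first operation has weight $e\leq-3$ and in the second has weight $-2$, so (R2) never applies directly to $\Gamma_\dplumb(\mcC_i,P_i)$. Second, and more seriously, ``no blow-down applies to the initial graph, hence only these simplifications are available'' is not a valid argument for sufficiency: the F-calculus includes the inverse moves (blow-ups, extrusions), which always apply, and the F-normal form is characterized by a list of conditions rather than by the non-applicability of reducing moves. Deciding which composite moves must be performed, and that performing them wherever possible terminates in the normal form, is exactly the content of Neumann's Theorem~8.2.

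The hypothesis under which that theorem applies is that $\Gamma_\dplumb(\mcC_i,P_i)$ is a minimal good resolution graph with negative definite intersection form; the paper supplies this by citing Mumford. Your proposal announces in its first sentence that it will invoke Theorem~8.2 but never verifies (or mentions) negative definiteness, and instead substitutes the flawed ``only $\RR\PP^2$-absorptions apply'' reasoning. To repair the proof you should delete that reasoning and replace it with the observation that the intersection form of $\Gamma_\dplumb(\mcC_i,P_i)$ is negative definite by Mumford's theorem, so that the argument of Neumann's Section~8 applies and Theorem~8.2 (in the case of graphs with boundary vertices) yields precisely the two displayed operations; your remaining verifications of (i)--(iv) can then stand as written.
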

\begin{proof}
Since the intersection form of $\Gamma_\dplumb(\mcC_i,P_i)$ is negative definite (cf.\ \cite{mumford1961}), we can apply the argument of \cite[Section~8]{neumann1981}. 
By \cite[Theorem~8.2]{neumann1981}, $\Gamma_\dplumb^\norm(\mcC_i,P_i)$ is obtained by the operations in the statement since $\Gamma_\dplumb(\mcC_i,P_i)$ has boundary vertices. 
It is clear that (i) holds. 
Since any vertex of $\Gamma_\dplumb(\mcC_i,P_i)$ has non-negative genus, any vertex $v_i$ of $\Gamma_\dplumb^\norm(\mcC_i,P_i)$ with $\bm{g}(v_i)\leq-1$ is given by the operations in the statement. 
Therefore, we obtain (ii), (iii) and (iv).  
\end{proof}

\begin{rem}
	The first operation in Lemma~\ref{lem:Fnormal} can be done by $-e-1$ times blowing-ups (${\rm R1}^-$), an $\RR\PP^2$-absorption (R2), a blowing-up (${\rm R1}^+$) and $-e-2$ times blowing-downs (${\rm R1}^+$) (see Remark~\ref{rem:operation}). 
The second operation in Lemma~\ref{lem:Fnormal} can be done by a blowing-up (${\rm R1}^-$), $b-1$ times blowing-downs (${\rm R1}^+$) and an $\RR\PP^2$-absorption (R2). 
\end{rem}

\begin{lem}\label{lem:existence of gtg'}
	Assume that $P_1\in\Sing(\mcC_1)$ is not a node. 
	Then there exists a homeomorphism $\gtg_{P_1}'\colon\BM(\mcC_1,P_1)\to\BM(\mcC_2,P_2)$ homotopic to $h_{P_1}$. 
\end{lem}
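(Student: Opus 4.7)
The strategy is to construct $\gtg_{P_1}'$ by the plumbing recipe of Subsection~\ref{sec:plumbing_graph} applied to the isomorphism $\varphi_{P_1}\colon\Gamma_\dplumb(\mcC_1,P_1)\xrightarrow{\sim}\Gamma_\dplumb(\mcC_2,P_2)$ given by \eqref{eq:isom_dplumb}, and then to show the resulting map is homotopic to $h_{P_1}$ by appeal to the asphericity of $\BM(\mcC_i,P_i)$ together with Olum's theorem \cite{olum1953}.

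\textbf{Construction of $\gtg_{P_1}'$.} Since $\varphi_{P_1}$ respects all the weights and extends $\xi_{h,P_1}$ on the arrowheads, we may choose for each $v\in\Ver(\Gamma_{1,P_1})$ a fibre-preserving homeomorphism $M_{1,v}\to M_{2,\varphi_{P_1}(v)}$ between the $S^1$-bundle pieces whose boundary restrictions are compatible, via the matrix $J$ in the plumbing construction, with the fixed trivializations $\tau_{1,y}$ and $\tau_{2,\varphi_{P_1}(y)}$. Gluing these pieces along $\varphi_{P_1}$ produces the required homeomorphism
\[
\gtg_{P_1}'\colon\BM(\mcC_1,P_1)\xrightarrow{\ \cong\ }\BM(\mcC_2,P_2),
\]
which, like $h_{P_1}$, sends $T_{1,y_1}$ to $T_{2,\xi_{h,P_1}(y_1)}$ for every arrowhead edge $y_1$.

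\textbf{Homotopy to $h_{P_1}$.} Since $P_1$ is not a node, Lemma~\ref{lem:Fnormal} shows that $\Gamma_\dplumb^\norm(\mcC_i,P_i)$ has only vertices of genus $\geq -1$, the genus $-1$ vertices being of degree one. Hence $\BM(\mcC_i,P_i)$ is a Haken $3$-manifold with incompressible toral boundary, in particular aspherical. By Olum's theorem \cite{olum1953} it suffices to check that $\gtg_{P_1}'_{\ast}$ and $h_{P_1\ast}$ are conjugate as homomorphisms $\pi_1(\BM(\mcC_1,P_1))\to\pi_1(\BM(\mcC_2,P_2))$. Both are isomorphisms carrying, for each arrowhead $y_1$, the peripheral subgroup $i_{y_1\ast}(\pi_1(T_{1,y_1}))$ onto $i_{\xi_{h,P_1}(y_1)\ast}(\pi_1(T_{2,\xi_{h,P_1}(y_1)}))$. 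Using the freedom we retain in the plumbing construction—post-composition of each piece with a fibre-preserving self-homeomorphism of $M_{2,\varphi_{P_1}(v)}$ that fixes the trivializations at the boundary tori—we adjust $\gtg_{P_1}'$ so that $\gtg_{P_1}'_{\ast}=h_{P_1\ast}$ up to an inner automorphism of $\pi_1(\BM(\mcC_2,P_2))$.

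\textbf{Main obstacle.} The technical heart of the argument is this last matching step: one must exhibit the automorphism $h_{P_1\ast}\circ(\gtg_{P_1}'_{\ast})^{-1}$, which already preserves the peripheral subgroups at each boundary torus, as the induced action of a self-homeomorphism of $\BM(\mcC_2,P_2)$ built from plumbing moves. This is handled by reducing to the F-normal form of $\Gamma_\dplumb(\mcC_2,P_2)$ via Lemma~\ref{lem:Fnormal} and invoking Waldhausen's rigidity for reduced graph manifolds with boundary (Theorems~\ref{thm:Wald-1} and \ref{thm:Wald-2}) inside each $S^1$-bundle component.
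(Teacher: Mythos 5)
Your overall architecture inverts the paper's: you build a homeomorphism combinatorially from $\varphi_{P_1}$ and then try to homotope it to $h_{P_1}$, whereas the paper applies Waldhausen's homotopy rigidity for Haken manifolds directly to $h_{P_1}$. The problem is that your version leaves the entire difficulty in the ``matching step,'' and the tools you invoke there do not close it. Recall that $h_{P_1}$ is only the restriction of the terminal map of a \emph{homotopy} $\gth_u$, hence a priori a homotopy equivalence rather than a homeomorphism; the content of the lemma is precisely to replace this homotopy equivalence by a homeomorphism in its homotopy class. Constructing \emph{some} homeomorphism $\gtg'_{P_1}$ realizing the graph isomorphism $\varphi_{P_1}$ is easy (it is the statement that equivalent decorated plumbing graphs give homeomorphic plumbed manifolds, already implicit in Subsection~\ref{subsec:reg_nbd}), but it tells you nothing about whether $(\gtg'_{P_1})_\ast$ and $h_{P_1\ast}$ differ by an inner automorphism.

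To repair this you would have to show that the peripheral-structure-preserving automorphism $h_{P_1\ast}\circ\big((\gtg'_{P_1})_\ast\big)^{-1}$ of $\pi_1(\BM(\mcC_2,P_2))$ is induced by a self-homeomorphism, and moreover by one you can absorb into your construction. Theorems~\ref{thm:Wald-1} and~\ref{thm:Wald-2} cannot do this: they take a \emph{given homeomorphism} and isotope it to respect the graph structure; they say nothing about which $\pi_1$-automorphisms are geometrically realizable, and fibre-preserving self-homeomorphisms of the pieces $M_{2,\varphi_{P_1}(v)}$ fixing the boundary trivializations realize only a small part of the outer automorphism group preserving peripheral structure. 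The statement you actually need is Waldhausen's rigidity theorem for sufficiently large $3$-manifolds, \cite[Theorem~6.1]{waldhausen_1968}: a homotopy equivalence of pairs between irreducible, boundary-irreducible, sufficiently large $3$-manifolds is homotopic to a homeomorphism. That is exactly what the paper does, after first exhibiting $\BM(\mcC_i,P_i)$ as a reduced graph manifold via the F-normal form and Theorem~\ref{thm:red_str}, so that irreducibility, boundary-irreducibility and largeness follow from \cite[Theorem~7.1 and Lemma~7.2]{waldhausen_1967_2} (your appeal to Lemma~\ref{lem:Fnormal} alone does not establish that $\BM(\mcC_i,P_i)$ is Haken). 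Once \cite[Theorem~6.1]{waldhausen_1968} is invoked, the desired $\gtg'_{P_1}$ homotopic to $h_{P_1}$ comes out in one step, and both the combinatorial construction and the Olum argument become unnecessary.
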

\begin{proof}
Let $\Gamma_\dplumb^\norm(\mcC_i,P_i)$ be the F-normal form of $\Gamma_\dplumb(\mcC_i,P_i)$. 
By Lemma~\ref{lem:Fnormal}, $\Gamma_\dplumb^\norm(\mcC_i,P_i)$ is connected. 
For each $i=1,2$, let $\mcS_i$ be a subset of edges of $\Gamma_\dplumb^\norm(\mcC_i,P_i)$ such that $\mcS_i$ contains precisely one edge of each maximal chain of $\Gamma_\dplumb^\norm(\mcC_i,P_i)$ which is not incident to a boundary vertex (see \cite[\S 4]{neumann1981} or Appendix \ref{sec:pg vs Wg}). 
By Theorem~\ref{thm:red_str}, $\red \TT_{\mcS_i}:=\bigcup_{y_i\in\mcS_i}T_{y_i}$ is the reduced graph structure of $\BM(\mcC_i,P_i)$. 
Hence $\BM(\mcC_i,P_i)$ can be regarded as a reduced graph manifold. 
Then $\BM(\mcC_i,P_i)$ is irreducible by \cite[Theomre~7.1]{waldhausen_1967}. 
Since $\BM(\mcC_i,P_i)$ is not a solid torus, $\BM(\mcC_i,P_i)$ is boundary-irreducible and sufficiently large by \cite[Lemma~7.2]{waldhausen_1967}. 
Hence it follows from \cite[Theorem~6.1]{waldhausen_1968} that there is a homeomorphism $\gtg_{P_1}'\colon\BM(\mcC_1,P_1)\to\BM(\mcC_2,P_2)$ such that $\gtg'_{P_1}$ is homotopic to $h_{P_1}$. 
\end{proof}

We prove that the homeomorphism $\gtg'_{P_1}$ is orientation-preserving. 

\begin{lem}\label{lem:orientation_gen_sing}
Assume that $P_1\in\Sing(\mcC_1)$ is not a node of $\mcC_1$, and let $\gtg_{P_1}'$ be the homeomorphism in Lemma~\ref{lem:existence of gtg'}. 
Then $\gtg'_{P_1}$ is orientation-preserving. 
\end{lem}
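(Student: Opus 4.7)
Plan: I would argue by contradiction, exploiting the rigidity of decorated plumbing graphs together with the negative-definiteness of the intersection form of a resolution of a plane curve singularity.

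Suppose $\gtg'_{P_1}$ is orientation-reversing. Composing with orientation-reversal of the target yields an orientation-preserving homeomorphism $\BM(\mcC_1, P_1) \to -\BM(\mcC_2, P_2)$. Applying Theorem~\ref{thm:Wald-2} to this map, after an isotopy I may assume $\gtg'_{P_1}$ is fiber-preserving, sending each $S^1$-fiber of $p_{1, v}\colon M_{1, v} \to B_{1, v}$ onto an $S^1$-fiber of $p_{2, \varphi(v)}$ for some bijection $\varphi\colon \Ver(\Gamma_{1, P_1}^\norm) \to \Ver(\Gamma_{2, P_2}^\norm)$ of non-boundary vertices. For a fiber-preserving homeomorphism between oriented $S^1$-bundles over surfaces, reversing the orientation of the total 3-manifold amounts to reversing exactly one of the fiber or base orientations, and the euler number transforms as $\bm{e}(v) = -\bm{e}(\varphi(v))$ for each $v \in \Ver(\Gamma_{1, P_1}^\norm)$.

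On the other hand, $h$ restricts near $P_1$ to a topological equivalence of the plane curve singularities $P_1$ and $P_2$, so $\Gamma_\dplumb(\mcC_1, P_1) \cong \Gamma_\dplumb(\mcC_2, P_2)$ as decorated plumbing graphs preserving all weights, and by Lemma~\ref{lem:Fnormal}(iv) there is a weight-preserving isomorphism $\iota\colon \Gamma_\dplumb^\norm(\mcC_1, P_1) \to \Gamma_\dplumb^\norm(\mcC_2, P_2)$. Composing gives a self-bijection $\sigma := \iota^{-1}\circ\varphi$ of $\Ver(\Gamma_{1, P_1}^\norm)$ satisfying $\bm{e}(\sigma(v)) = -\bm{e}(v)$, and summing yields $\sum_v \bm{e}(v) = 0$. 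The contradiction comes from the negative-definiteness of $S(\Gamma_\dplumb(\mcC_1, P_1))$ (Mumford \cite{mumford1961}): in the case where no $\RR\PP^2$-absorption (R2) is needed to reduce to F-normal form, so that $S(\Gamma_\dplumb^\norm(\mcC_1, P_1))$ remains negative definite, its trace is strictly negative, yet by Definition~\ref{def:intersection_form} the trace equals $\sum_v \bm{e}(v) + \#\{\text{loops in }\Gamma_{1,P_1}^\norm\} = \#\{\text{loops}\} \geq 0$, a contradiction; in the remaining case the asymmetry between $\bm{e}(v) \geq 0$ for $\bm{g}(v) = -1$ vertices and $\bm{e}(v) \leq -2$ for chain vertices in Lemma~\ref{lem:Fnormal}(iii) rules out the negation-symmetry implied by $\bm{e}(\sigma(v)) = -\bm{e}(v)$.

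The main obstacle is verifying the transformation law $\bm{e}(v) = -\bm{e}(\varphi(v))$ under orientation reversal in the sign conventions of Section~\ref{sec:plumbing_graph} (especially the role of the gluing involution $J$), and handling cleanly the case where $\RR\PP^2$-absorptions appear in the reduction to F-normal form, since there the F-normal form's intersection form is no longer negative definite; a heavier but more conceptual alternative is to invoke the non-amphichirality of non-node plane curve singularity links as Seifert fibered spaces, whose Seifert invariants are never symmetric under negation precisely because they originate from the negative-definite resolution graph.
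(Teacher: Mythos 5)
Your overall strategy (contradiction via orientation reversal against negative definiteness of the resolution graph) is the right one, and it is essentially the paper's, but the step you flag as "the main obstacle" is in fact where the argument breaks. The transformation law $\bm{e}(v)=-\bm{e}(\varphi(v))$ is false for the pieces of a plumbed manifold: each $M_v$ is an $S^1$-bundle over a surface \emph{with boundary}, so $\bm{e}(v)$ is a relative Euler number depending on the chosen boundary trivializations, and reversing the orientation of the total space forces a renormalization of the gluing data whose effect on the weights is given by Theorem~\ref{thm:ori_rev} (Neumann's Theorem~7.1), not by naive negation. Concretely, for an ordinary cusp the resolution graph has Euler weights $-1,-3,-2$, while the F-normal form of the orientation-reversed manifold has weights $-1,-2,-2,-2$; the vertex set does not even have the same cardinality, so there is no bijection $\sigma$ with $\bm{e}(\sigma(v))=-\bm{e}(v)$, the identity $\sum_v\bm{e}(v)=0$ does not follow, and the trace argument collapses. (The loop count is also a red herring: resolution graphs of plane curve singularities are trees.)

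What the paper actually does is the execution of the "heavier alternative" you mention in your last sentence. It first pins down the boundary behaviour: using the chain gluing matrix $B_k$ and the constraint that $h$ fixes the trivializations coming from meridians, it shows the boundary maps satisfy $H_2=-H_1$, so that $\Gamma_\dplumb^\norm(\mcC_2,P_2)$ is the F-normal form of $-\PM(\Gamma_\dplumb^\norm(\mcC_1,P_1))$ via Theorem~\ref{thm:ori_rev}; this also disposes of the $\RR\PP^2$-absorption worry, since Theorem~\ref{thm:ori_rev}~(ii) together with Lemma~\ref{lem:Fnormal}~(iii) rules out genus $-1$ vertices, forcing $\Gamma_\dplumb(\mcC_i,P_i)=\Gamma_\dplumb^\norm(\mcC_i,P_i)$. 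It then extracts the star-shaped subgraph at a vertex of degree $\geq 3$ (which exists because $P_1$ is not a node) and compares Seifert Euler numbers: negative definiteness forces this invariant to be negative on both sides (Theorem~\ref{thm:Seifelt2}), while orientation reversal would make one of them positive by $\bm{e}(-M)=-\bm{e}(M)$, contradicting Theorem~\ref{thm:Seifert1}. If you replace your trace computation with this Seifert Euler number comparison, your outline becomes the paper's proof.
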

\begin{proof}
Assume that $\gtg_{P_1}'$ is orientation-reversing. 
Let $y_{i1}\in\mcY(\Gamma_{i,P_i})$ be edges with $t({\red y_{i1}})\in\AH(\Gamma_{i,P_i})$ and $\xi_{h,P_1}(t({\red y_{11}}))={\red y_{21}}$. 
Since the isomorphism $\varphi_{P_1}$ satisfies $\varphi_{P_1}|_{\AH(\mcC_{1,P_1})}=\xi_{h,P_1}$, 
the maximal chains incident to $\red y_{i1}$ are the same form,
\begin{align}
\begin{aligned}\label{eq:max_chain}
\begin{tikzpicture}
	\coordinate (a0) at (0,0) node [left] {$v_{i1}$}; 
	\coordinate (a1) at (1.5,0);
	\coordinate (a2) at (3,0);
	\coordinate (a21) at (3.5,0);
	\coordinate (a22) at (4.5,0);
	\coordinate (a3) at (5,0);
	\coordinate (b0) at (6.5,0);
	\coordinate (b11) at (7.5,0.2);
	\coordinate (b12) at (8,0.3);
	\coordinate (b21) at (7.5,-0.2);
	\coordinate (b22) at (8,-0.3);
	\draw[fill] (7.7,0.13) circle [radius=0.5pt];
	\draw[fill] (7.7,0) circle [radius=0.5pt];
	\draw[fill] (7.7,-0.13) circle [radius=0.5pt];
	\draw[fill] (a1) circle [radius=2pt] node [above] {$-b_1$};
	\draw[fill] (a2) circle [radius=2pt] node [above] {$-b_2$};
	\draw[fill] (a3) circle [radius=2pt] node [above] {$-b_k$};
	\draw[fill] (b0) circle [radius=2pt] node [above] {$v_{i2}$};
	\draw [-{Stealth[length=8pt]}] (a21) -- (a1) -- node [below] {$y_{i1}$} (a0);
	\draw (a22) -- (a3) -- node [below] {$y_{i2}$} (b0);
	\draw[dashed] (b21) -- (b22);
	\draw (b21) -- (b0) -- (b11);
	\draw[dashed] (b11) -- (b12);
	\draw[dashed] (a21) -- (a22);
\end{tikzpicture} 
\end{aligned}
\end{align}
where $-b_j\leq -2$ are the {\red Euler} numbers of corresponding vertices, and $y_{i2}\in\mcY(\Gamma_{i,P_i})$ are edges such that $v_{i2}=t(y_{i2})$ and $\bm{d}(v_{i2})\geq 3$ for $i=1,2$. 
Let $v_{i1}\in\Str_i$ be the vertices corresponding to $t(y_{i1})\in\AH(\Gamma_{i,P_i})$. 
Let $T_{ij}$ be the boundary component $\red T_{y_{ij}}$ of $M_{v_{ij}}$ corresponding $y_{ij}$ for each $i,j=1,2$. 
By \cite[Lemma 5.3]{neumann1981}, $T_{i1}$ and $T_{i2}$ are identified in $\BM(\mcC_i)$ via homeomorphism given by the matrix 
\[ B_k:=\begin{pmatrix}
d_k & c_k \\ - d_{k-1} & -c_{k-1}
\end{pmatrix}, \]
where $c_i, d_i\in\ZZ$ are coprime integers such that $c_i/d_i$ is equal to the continued fraction $[b_1,\dots,b_i]$ for each $i=1,\dots,k$ (see \cite[Lemma 5.2]{neumann1981} for details). 
Note that $c_{-1}=0$, $c_0=1$, $d_{-1}=-1$, $d_0=0$, and $\det B_k=-1$. 
By \cite[Proposition~3.4]{abst2023} and \cite[Theorem~5.5]{waldhausen_1967}, we may assume that $\red \tau_{y_{2j}}\circ \gtg'_{P_1}\circ\tau_{y_{1j}}^{-1}$ is the homeomorphism of $S^1\times S^1$ given by the matrix 
\[ H_j:=\begin{pmatrix}
	\alpha_j&0\\ \beta_j & -\alpha_j
\end{pmatrix}  \]
for each $j=1,2$, where $\alpha_j\in\{\pm1\}$ and $\beta_j\in\ZZ$. 
Hence we have the following commutative diagram: 
\begin{align} 
\begin{aligned}\label{eq:diagram_boundary}
\begin{tikzpicture}
	\node (a1) at (-1.2,0.5) {$T_{11}$};
	\node (a2) at (-1.2,-0.5) {$T_{12}$};
	\node (b1) at (1.2,0.5) {$T_{21}$};
	\node (b2) at (1.2,-0.5) {$T_{22}$};
	\node (a3) at (-4.2,1.3) {$S^1\times S^1$};
	\node (a4) at (-4.2,-1.3) {$S^1\times S^1$};
	\node (b3) at (4.2,1.3) {$S^1\times S^1$};
	\node (b4) at (4.2,-1.3) {$S^1\times S^1$};
	\draw[double distance=2pt] (a1) -- (a2);
	\draw[double distance=2pt] (b1) -- (b2);
	\draw[->] (a1) -- node [above] {\footnotesize $\gtg_{P_1}'$} (b1);
	\draw[->] (a2) -- node [above] {\footnotesize $\gtg_{P_1}'$} (b2);
	\draw[->] (a3) -- node [left] {\footnotesize $B_k$} (a4);
	\draw[->] (b3) -- node [right] {\footnotesize $B_k$} (b4);
	\draw[->] (a3) -- node [above] {\footnotesize $H_1$} (b3);
	\draw[->] (a4) -- node [above] {\footnotesize $H_2$} (b4);
	\draw[->] (a3) -- node [below] {\footnotesize $\red \tau_{y_{11}}$} (a1);
	\draw[->] (a4) -- node [above] {\footnotesize $\red \tau_{y_{12}}$} (a2);
	\draw[->] (b3) -- node [right=8pt, below] {\footnotesize $\red \tau_{y_{21}}$} (b1);
	\draw[->] (b4) -- node [right=8pt, above] {\footnotesize $\red \tau_{y_{22}}$} (b2);
\end{tikzpicture} 
\end{aligned}
 \end{align}
The $(1,2)$ entry of $B_kH_1B_k^{-1}=H_2$ is 
$(2\alpha_1 d_k+\beta_1 c_k)c_k=0$. 
Since $\gcd(c_k,d_k)=1$, $c_k\geq k+1$ and $d_k\geq k$ by \cite[Lemma 5.2 (ii)]{neumann1981}, we obtain either $d_k=0$ or $c_k=2$. 
If $d_k=0$, then $k=0$, $\beta_1=0$, and $H_2=-H_1$. 
If $c_k=2$, then we have $k=1$, $d_k=1$, $b_1=2$, $\beta_1=-\alpha_1$ and $H_2=-H_1$. 
Thus, we can construct $\Gamma_\dplumb^\norm(\mcC_2,P_2)$ from $-\Gamma_\dplumb^\norm(\mcC_1,P_1)$ by Theorem~\ref{thm:ori_rev}. 

By the above fact, we can apply the argument of \cite[Section 9]{neumann1981} as follows. 
By Theorem~\ref{thm:ori_rev} (ii) and Lemma~\ref{lem:Fnormal} (iii), $\Gamma_\dplumb^\norm(\mcC_i,P_i)$ for $i=1,2$ have no vertex $v$ with $\bm{g}(v)=-1$. 
Hence $\Gamma_\dplumb(\mcC_i,P_i)=\Gamma_\dplumb^\norm(\mcC_i,P_i)$ for $i=1,2$  by Lemma~\ref{lem:Fnormal}, and $\sign(y_i)=+1$ for any $y_i\in\mcY(\Gamma_{i,P_i})$. 
Since $\Gamma_{i,P_i}$ is the dual graph of embedded resolution of the singularity $P_i\in\mcC_i$, $\Gamma_{i,P_i}$ has no cycle for each $i=1,2$. 
Moreover, there is $v_1\in\mcV(\Gamma_{1,P_i})$ with $\bm{d}(v_1)\geq3$ since $P_1$ is not node. 
Let 
\[ \Gamma_{\dplumb}^{1,v_1}=(\Gamma_{1,P_1}^{v_1},\bm{g}_{1,P_1}^{v_1},\bm{e}_{1,P_1}^{v_1};\Ver,\AH)\] 
be the full subgraph of $\Gamma_\dplumb(\mcC_1,P_1)$ such that $\mcV(\Gamma_{1,P_1}^{v_1})$ consists of $v_1$, all vertices on maximal chains adjoining $v_1$ and all boundary vertices adjacent to $v_1$. 
The decorated plumbing graph $\Gamma_\dplumb^{1,v_1}$ is a star-shaped decorated plumbing graph, and $\PM(\Gamma_\dplumb^{1,v_1})$ is a Seifelt manifold. 
Let $\widetilde{\Gamma}_\dplumb^{1,v_1}$ be the F-normal form of $-\Gamma_\dplumb^{1,v_1}$. 
By Theorem~\ref{thm:ori_rev}, $\widetilde{\Gamma}_\dplumb^{1,v_1}$ is also a star-shaped decorated plumbing graph, and is a subgraph of $\Gamma_\dplumb(\mcC_2,P_2)$. 
Since the intersection form $S(\Gamma_\dplumb^{1,v_1})$ is negative definite, the {\red Euler} number of the Seifelt manifold $\PM(\Gamma_\dplumb^{1,v_1})$ is negative by Theorem~\ref{thm:Seifelt2}. 
Thus the {\red Euler} number of $\PM(\widetilde{\Gamma}_\dplumb^{1,v_1})=\PM(-\Gamma_\dplumb^{1,v_1})$ is positive by the equality (\ref{eq:Seiflt}), which is contradiction to Theorem~\ref{thm:Seifert1} since $\widetilde{\Gamma}_\dplumb^{1,v_1}$ is a subgraph of $\Gamma_\dplumb(\mcC_2,P_2)$, and the intersection form $S(\widetilde{\Gamma}_\dplumb^{1,v_1})$ is negative definite. 
Therefore, $\gtg'_{P_1}$ is orientation-preserving. 
\end{proof}

\begin{lem}\label{lem:Homeo_Sing}
Assume that $P_1\in\Sing\mcC_1$ is not a node of $\mcC_1$. 
There exists a homeomorphism $\gtg_{P_1}\colon\BM(\mcC_1,P_1)\to\BM(\mcC_2,P_2)$ such that 
\begin{enumerate}
	\item $\gtg_{P_1}$ is homotopic to $h_{P_1}$;
	\item $\gtg_{P_1}(\TT_{1,P_1})=\TT_{2,P_2}$;
	\item $\gtg_{P_1}({\red M_{v_1}})={\red M_{\varphi_{P_1}(v_1)}}$ for each $v_1\in\Ver(\Gamma_{1})$, where $\varphi_{P_1}\colon \Gamma_{\dplumb}(\mcC_1,P_1)\to\Gamma_{\dplumb}(\mcC_2,P_2)$ is the isomorphism in {\rm (\ref{eq:isom_dplumb})}. 
	\item there is $\alpha\in\{\pm1\}$ such that ${\red \tau_{y_2}\circ\gtg_{P_1}\circ\tau_{y_1}}\colon S^1\times S^1\to S^1\times S^1$ is given by $\alpha E$ for any $y_1\in\mcY(\Gamma_{1,P_1})$ with $t(y_1)\in\AH(\Gamma_{1,P_1})$ and $y_2:=\varphi_{P_1}(y_1)$; 
	\item for each $v_1\in\Ver(\Gamma_{1,P_1})$, $\gtg_{P_1}$ preserves fibers of $v_1$ and $v_2=\varphi_{P_1}(v_1)\in\Ver(\Gamma_{2,P_2})$. 
\end{enumerate}
Moreover, $\alpha=1$ if and only if $h$ maps a meridian of $\red D_{t(y_1)}$ to a meridian of $\red D_{t(y_2)}$ for some $v_i\in\Ver(\Gamma_{i,P_i})$ with $t(y_i)\in\AH(\Gamma_{i,P_i})$ and $y_2=\varphi_{P_1}(y_1)$. 
\end{lem}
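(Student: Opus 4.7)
The plan is to start from the homeomorphism $\gtg'_{P_1}\colon\BM(\mcC_1,P_1)\to\BM(\mcC_2,P_2)$ produced by Lemma~\ref{lem:existence of gtg'}, which is homotopic to $h_{P_1}$ and orientation-preserving by Lemma~\ref{lem:orientation_gen_sing}, and to refine it in two stages so that it first respects the reduced graph structure coming from the F-normal form and then the full plumbing graph decomposition $\Gamma_\dplumb(\mcC_i,P_i)$ together with the boundary trivializations.

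In the first stage, the reduced graph structures $\TT_{i,\mcS_i}\subset\TT_{i,P_i}$ correspond to the F-normal forms $\Gamma_\dplumb^\norm(\mcC_i,P_i)$, which by Lemma~\ref{lem:Fnormal}(iv) each uniquely reconstruct $\Gamma_\dplumb(\mcC_i,P_i)$. Since $\BM(\mcC_i,P_i)$ is a reduced graph manifold with boundary (cf.\ the proof of Lemma~\ref{lem:existence of gtg'}), Theorem~\ref{thm:Wald-2} furnishes an isotopy deforming $\gtg'_{P_1}$ into a homeomorphism $\gtg''_{P_1}$ which carries $\TT_{1,\mcS_1}$ onto $\TT_{2,\mcS_2}$, sends each reduced Seifert piece onto a corresponding one, preserves $S^1$-fibers away from any solid-torus piece, and induces an equivalence of W-graphs. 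This equivalence yields an isomorphism of F-normal forms, which by Lemma~\ref{lem:Fnormal}(iv) lifts to an isomorphism of $\Gamma_\dplumb(\mcC_i,P_i)$; on boundary arrowheads it agrees with $\xi_{h,P_1}$ because $\gtg''_{P_1}$ is homotopic to $h_{P_1}$, so we identify it with $\varphi_{P_1}$ of (\ref{eq:isom_dplumb}).

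In the second stage, each reduced Seifert piece is assembled from the $S^1$-bundle pieces over $B_{i,v_i}$ corresponding to the vertices of a single maximal chain, and the $\varphi_{P_1}$-matched chains carry identical weight sequences on both sides. Hence the intermediate tori $T_{i,y_i}$ with $t(y_i)\in\Ver(\Gamma_{i,P_i})$ inside a reduced piece are determined up to isotopy fixing the outer boundary, and a further isotopy of $\gtg''_{P_1}$ supported in neighborhoods of the reduced pieces carries every torus of $\TT_{1,P_1}$ onto the corresponding torus of $\TT_{2,P_2}$. Properties (ii), (iii), (v) follow immediately. For (iv), fiber preservation confines the matrix $\tau_{2,y_2}^{-1}\circ\gtg_{P_1}\circ\tau_{1,y_1}$ to the form $\bigl(\begin{smallmatrix}\alpha&0\\c&\alpha\end{smallmatrix}\bigr)$ with $\alpha\in\{\pm 1\}$; since the oriented $S^1$-bundle $p_{i,v_i}$ is determined by its euler number $\bm{e}_i(v_i)$ together with the framings on its boundary tori, and these data match across $\varphi_{P_1}$, a fiber-wise isotopy on collars of the boundary tori absorbs the off-diagonal entry $c$, reducing the matrix to $\alpha E$. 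Connectedness of $\BM(\mcC_i,P_i)$ and the compatibility of fiber orientations across adjacent pieces force the same $\alpha$ at every boundary torus.

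For the moreover statement, a meridian $\gtm_{i,C_{i,t(y_i)}}$ is represented in $\BM(\mcC_i,P_i)$ by the canonically oriented fiber of the $S^1$-bundle adjacent to $T_{i,y_i}$, which under $\tau_{i,y_i}$ corresponds to $\{1\}\times S^1$ with a fixed orientation; the identity $\tau_{2,y_2}^{-1}\circ\gtg_{P_1}\circ\tau_{1,y_1}=\alpha E$ then yields $\gtg_{P_1\ast}(\gtm_{1,C_{1,t(y_1)}})=\gtm_{2,C_{2,t(y_2)}}^{\alpha}$, and homotopy invariance transfers the equality to $h_{P_1\ast}$, whence the stated equivalence with $\alpha=1$. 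The main technical obstacle is the second stage: arranging simultaneously the matching of all chain tori and the normalization of the boundary trivializations to $\alpha E$, which requires combining the F-calculus encoding of chains with the classification of oriented $S^1$-bundles with framed boundary.
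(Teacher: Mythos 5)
Your overall strategy coincides with the paper's: start from $\gtg'_{P_1}$ of Lemma~\ref{lem:existence of gtg'}, use Lemma~\ref{lem:orientation_gen_sing} and Waldhausen's isotopy theorems to match the reduced graph structures $\TT_{i,\mcS_i}$ coming from the F-normal forms, and then refine inside the reduced pieces. However, the second stage, which you yourself identify as the technical heart, contains genuine gaps. First, a reduced piece is not ``assembled from the $S^1$-bundle pieces of a single maximal chain'': cutting along one torus per maximal chain leaves star-shaped Seifert pieces (a node vertex together with portions of the adjoining chains, and, where an $\RR\PP^2$-absorption (R2) was applied, an $S^1$-bundle over the M\"obius band). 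In particular the tori of $\TT_{i,P_i}$ that were removed by (R2) are not recovered by your ``intermediate tori of a chain are determined up to isotopy'' argument; the paper has to invoke Waldhausen's analysis of the M\"obius-band bundle and Theorem~\ref{thm:Wald-2} separately to place them. Second, property (v) does \emph{not} ``follow immediately'': for a vertex $v_1$ lying on a chain, the Seifert fibration of the reduced piece restricted to $M_{1,v_1}$ is not the bundle projection $p_{1,v_1}$, so Theorem~\ref{thm:Wald-2}(iii) gives you nothing there. One needs the homological identification of the fiber classes along matched chains (the argument of \cite[pp.~322--323]{neumann1981}) to see that $\gtg_{P_1}$ carries a fiber of $v_1$ to a curve homotopic to a fiber of $\varphi_{P_1}(v_1)$, and then Lemmas~\ref{lem:S1-bdl_torus}, \ref{lem:S1-bdl_solidtorus}, \ref{lem:S1-bdl_over_annulus} and Theorem~\ref{thm:S1-bdl_gene} to upgrade this to genuine fiber preservation by an isotopy constant on $\TT_{1,P_1}$ and $\partial\BM(\mcC_1,P_1)$.

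On (iv) you also diverge from the paper in a way that needs justification. The paper does not absorb the off-diagonal entry by a collar Dehn twist; it shows the entry is forced to vanish, by conjugating $H_1=\bigl(\begin{smallmatrix}\alpha_1&0\\ \beta_1&\alpha_1\end{smallmatrix}\bigr)$ by the chain matrix $B_k$ in the diagram (\ref{eq:diagram_boundary}) and reading off $\beta_1 c_k^2=0$, which simultaneously yields $\alpha_1=\alpha_2$ and hence the compatibility of the sign across the adjacent Seifert piece. Your Dehn-twist normalization is workable at each boundary torus individually (a twist supported in a collar is isotopic to the identity and preserves fibers), but then the claim that ``connectedness and compatibility of fiber orientations force the same $\alpha$'' is exactly the point that needs an argument; in the paper it is obtained from the $B_k$-computation together with \cite[Remark~3.5]{abst2023}. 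Without that step your proof establishes $\alpha_j E$ with possibly different signs $\alpha_j$ at different arrowheads, which is strictly weaker than (iv) and insufficient for the ``moreover'' clause.
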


\begin{proof}
We construct {\red the} desired homeomorphism $\gtg_{P_1}$ by deforming $\gtg'_{P_1}$ in Lemma~\ref{lem:existence of gtg'}. 
Let $\mcS_i$ be the subset of edges of $\Gamma_\dplumb^\norm(\mcC_i,P_i)$ as in the proof of Lemma~\ref{lem:existence of gtg'}, and put $\red \TT_{\mcS_i}:=\bigcup_{y_i\in\mcS_i}T_{y_i}$ which is a reduce graph structure of $\BM(\mcC_i,P_i)$. 
By \cite[Theorem~8.1]{waldhausen_1967} and Lemma~\ref{lem:orientation_gen_sing}, there is an isotopy $\gth_{u}\colon \BM(\mcC_1,P_1)\to\BM(\mcC_2,P_2)$ ($0\leq u\leq 1$) which is constant on $\partial\BM(\mcC_i,P_i)$, and satisfies $\gth_{0}=\gtg'_{P_1}$ and $\red \gth_1(\TT_{\mcS_1})=\TT_{\mcS_2}$.

Let $y_{i1}\in\mcY(\Gamma_{i,P_i})$ be edges satisfying $t(y_{i1})\in\AH(\Gamma_{i,P_i})$ and $\varphi_{P_1}(y_{11})=y_{21}$. 
For each $i=1,2$, let $\gtc_i$ be the maximal chain incident to $t(y_{i1})$, and let $y_{i2}\in\mcY(\Gamma_{i,P_i})$ be the edge on $\gtc_i$ such that $\bm{d}(t(y_{i2}))>2$ as in (\ref{eq:max_chain}). 
By the same argument in the proof of Lemma~\ref{lem:orientation_gen_sing}, 
we may assume that $\red \tau_{y_{2j}}\circ\gtg'_{P_1}\circ\tau_{y_{1j}}^{-1}$ is given by 
\[ H_j:=\begin{pmatrix}
	\alpha_j&0\\ \beta_j & \alpha_j
\end{pmatrix}  \]
for each $j=1,2$, where $\alpha_j\in\{\pm1\}$ and $\beta_j\in\ZZ$. 
It follows from the diagram (\ref{eq:diagram_boundary}) that $\alpha_1=\alpha_2$ and $\beta_j=0$ for $j=1,2$. 

For $y_i\in\mcS_i$ ($i=1,2$) with $\red \gth_{1}(T_{y_1})=T_{y_2}$, the maximal chains of $\Gamma_\dplumb^\norm(\mcC_i,P_i)$ containing $y_i$ have the same number of edges. 
Since each maximal chain of $\Gamma_\dplumb^\norm(\mcC_i,P_i)$ corresponds to a part homeomorphic to $I\times S^1\times S^1$ in $\BM(\mcC_i,P_i)$, 
there is an isotopy $\gth_{u}\colon \BM(\mcC_1,P_1)\to\BM(\mcC_2,P_2)$ ($1\leq u\leq 2$) which is constant on $\partial\BM(\mcC_i,P_i)$, and satisfies $\gth_{2}(\TT_{1,P_1}^\norm)=\TT_{2,P_2}^\norm$, 
where $\TT_{i,P_i}^\norm$ is the graph structure corresponding to $\Gamma_\dplumb^\norm(\mcC_i,P_i)$. 
Tori in $\TT_{i,P_i}$ exchanged by the operations in Lemma~\ref{lem:Fnormal} are in the $S^1$-bundle over the M\"obius band studied in \cite[Section~3]{waldhausen_1967}, and the tori give reduced graph structure of the $S^1$-bundle. 
Hence there is an isotopy $\gth_{u}\colon \BM(\mcC_1,P_1)\to\BM(\mcC_2,P_2)$ ($2\leq u\leq 3$) by Theorem~\ref{thm:Wald-2}, which is constant on $\partial\BM(\mcC_i,P_i)$, and satisfies $\gth_{3}(\TT_{1,P_1})=\TT_{2,P_2}$.

If $v_{1}\in\Ver(\Gamma_{1,P_1})$ is not on any chain of $\Gamma_\dplumb(\mcC_1,P_1)$, and $v_2:=\varphi_{P_1}(v_1)\in\Ver(\Gamma_{2,P_2})$, then, for a fiber $\red F_{v_1}$ of $v_1$,  the image $\red \gth_{3}(F_{v_1})$ is ambient isotopic in $\red M_{v_2}$ to a fiber $\red F_{v_2}$ of $v_2$ by Theorem~\ref{thm:S1-bdl_gene}. 
If $v_1\in\Ver(\Gamma_{1,P_1})$ is on a chain of $\Gamma_\dplumb(\mcC_1,P_1)$, then 
the argument of \cite[p.322, 323]{neumann1981} shows that,  for a fiber $\red F_{v_1}$ of $v_1$, the image $\red \gth_{3}(F_{v_1})$ is homotopic in $\red M_{v_2}$ to a fiber $\red F_{v_2}$ of $v_2$ (cf. \cite[Chapter~II]{laufer1971}). 

For $y_1\in\mcY(\Gamma_{1,P_1})$ with $t(y_1), t(\overline{y}_1)\notin\AH(\Gamma_{1,P_1})$, 
we can identify $\red T_{\overline{y}_1}$ with $\red T_{y_1}$ in $\BM(\mcC_1,P_1)^\circ$. 
Let $y_{i,1},\dots,y_{i,r}\in\mcY(\Gamma_{i,P_i})$ be edges such that 
\begin{align*}
	& t(y_{i,j})\in\Ver(\Gamma_{i,P_i}),  && y_{i,j'}\notin\{y_{i,j},\overline{y}_{i,j}\} \quad \mbox{if $j\ne j'$}, 
	\\
	& \varphi_P(y_{1,j})=y_{2,j}, && \mcY(\Gamma_{i,P_i})=\{y_{i,j}, \overline{y}_{i,j}\mid j=1,\dots,r\}. 
\end{align*}  
By Lemma~\ref{lem:S1-bdl_torus}, there is an isotopy $\gth_{u}$ ($3\leq u\leq 4$) from $\gth_{3}$, which is constant on $\partial\BM(\mcC_i,P_i)$ and preserves graph structures $\TT_{i,P_i}$, such that $\gth_{4}$ maps every fiber of $t(\overline{y}_{1,j})$ in $\red T_{\overline{y}_{1,j}}$ to a fiber of $t(\overline{y}_{2,j})$ in $\red T_{\overline{y}_{2,j}}$. 
Since each fiber $F_{i,j}\subset {\red T_{y_{i,j}}}$ of $t(y_{i,j})$ intersects at just one point with each fiber $\red \overline{F}_{i,j}\subset T_{\overline{y}_{i,j}}=T_{y_{i,j}}$ of $t(\overline{y}_{i,j})$, there is an isotopy $\gth_{u}$ ($4\leq u\leq 5$) from $\gth_{4}$ such that 
\begin{align*}
	& \gth_{u}(\TT_{1,P_1})=\TT_{2,P_2}, 
	&& \gth_{u}(\overline{F}_{1,j})=\gth_{4}(\overline{F}_{1,j})  
\end{align*}
for each fiber $\overline{F}_{1,j}\subset {\red T_{\overline{y}_{1,j}}}$ of $t(\overline{y}_{1,j})$ and any $4\leq u\leq 5$, and for each fiber $\red F_{1,y}\subset T_{y_1}$ of $\red t(y_1)$ with $\red y_1\in\mcY(\Gamma_{1,P_1})$, $\red \gth_5(F_{1,y_1})$ is a fiber of $\red t(\varphi_{P_1}(y_1))$. 
By Lemma~\ref{lem:S1-bdl_solidtorus}, \ref{lem:S1-bdl_over_annulus}, Theorem~\ref{thm:S1-bdl_gene} and Remark~\ref{rem:S1-bdl_gene}, there is an isotopy $\gth_{u}$ ($5\leq u\leq 6$) from $\gth_{5}$ such that 
\begin{align*}
	& \gth_{u}|_{\TT_{1,P_1}}=\gth_{5}|_{\TT_{1,P_1}}, 
	&& \gth_{u}|_{\partial\BM(\mcC_1,P_1)}=\gth_{5}|_{\partial\BM(\mcC_1,P_1)} 
\end{align*}
for any $5\leq u\leq6$, 
and $\gth_{6}$ maps every fiber of all $v_1\in\Ver(\Gamma_{1,P_1})$ to that of $\varphi_{P_1}(v_1)$. 
Put $\gtg_{P_1}:=\gth_{6}$, which satisfies (i), (ii), (iii) and (v). 
Furthermore, it follows from \cite[Remark~3.5]{abst2023} that $\gtg_{P_1}$ satisfies (iv). 
\end{proof}

By identifying $\red Y_i\setminus \mcC_i'$ with $\PP^2\setminus\mcC_i$, let $\red \gtm_{v_i}\in\pi_1(\PP^2\setminus\mcC_i)$ be a meridian of $\red D'_{v_i}\subset \mcC_i'$ for $v_i\in\mcV(\Gamma_i)$. 
Theorem~\ref{thm:meridian} follows from the next theorem. 

\begin{thm}\label{thm:homeo_boundary}
	Let $\mcC_1,\mcC_2\subset\PP^2$ be two plane curves. 
	Assume that there is a homeomorphism $h\colon U(\mcC_1)\to U(\mcC_2)$ with $h(\mcC_1)=\mcC_2$. 
	Let $\varphi\colon\Comb(\mcC_1)\to\Comb(\mcC_2)$ be the equivalence map constructed in Remark~\ref{rem:fact_comb}. 
	Then there exists a homeomorphism $\Psi_\BM\colon\BM(\mcC_1)\to\BM(\mcC_2)$ such that  
	\begin{enumerate}
		\item $\Psi_{\BM}$ is homotopic to the restriction of $h$ in $U(\mcC_2)\setminus\mcC_2$; 
		\item $\red \Psi_\BM(T_{y_1})=T_{\varphi(y_1)}$ for each $y_1\in\mcY(\Gamma_1)$;
		\item $\red \Psi_\BM(M_{v_1})=M_{\varphi(v_1)}$ for each $v_1\in\mcV(\Gamma_1)$;
		\item there is $\alpha\in\{\pm1\}$ such that $\red \Psi_{\BM\ast}(\gtm_{v_1})=\gtm_{v_2}^\alpha$ up to conjugate for any $v_i\in\mcV(\Gamma_i)$ with $v_2=\varphi(v_1)$. 
	\end{enumerate}
In particular, $\Psi_{\BM\ast}=\Psi_h^\infty$. 
\end{thm}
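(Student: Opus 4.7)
The plan is to build $\Psi_\BM$ piecewise along the natural decomposition
\[ \BM(\mcC_i)=\bigcup_{v_i\in\Str_i}M_{i,v_i}\,\cup\bigcup_{P_i\in\Sing(\mcC_i)}\BM(\mcC_i,P_i), \]
with the pieces glued along the graph-structure tori in $\TT_i$. At each singular point $P_1$ of $\mcC_1$, set $P_2:=h(P_1)$; Lemma~\ref{lem:Homeo_smooth_node} (node case) and Lemma~\ref{lem:Homeo_Sing} (higher case) supply an orientation-preserving homeomorphism $\gtg_{P_1}\colon\BM(\mcC_1,P_1)\to\BM(\mcC_2,P_2)$ homotopic to $h_{P_1}$, respecting the graph structure and the induced isomorphism $\varphi_{P_1}$ of decorated plumbing graphs, preserving fibers on each $S^1$-bundle block, and acting as $\alpha_{P_1}E$ on every boundary trivialization for some $\alpha_{P_1}\in\{\pm1\}$. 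These will be the local models that I would extend to a global $\Psi_\BM$.

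The first key step is to show that the local signs $\alpha_{P_1}$ collapse to a single $\alpha$. From the meridian clauses of Lemmas~\ref{lem:Homeo_smooth_node} and~\ref{lem:Homeo_Sing}, $\alpha_{P_1}=1$ iff $h_\ast$ sends some meridian of a local branch at $P_1$ to a meridian of the corresponding local branch at $P_2$, and each lemma gives a uniform sign across all local branches at $P_1$. For two singular points $P_1,P_1'$ lying on a common irreducible component $C_{1,v_1}$, the meridians of $C_{1,v_1}$ at $P_1$ and $P_1'$ are conjugate in $\pi_1(\PP^2\setminus\mcC_1)$, so $h_\ast$ must treat their orientation uniformly and $\alpha_{P_1}=\alpha_{P_1'}$. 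Since $\mcC_1\subset\PP^2$ is connected (any two irreducible components meet by B\'ezout), the incidence graph of irreducible components of $\mcC_1$ is connected, and walking through it propagates a common sign $\alpha\in\{\pm1\}$ to every singular point.

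For the extension across each strict-transform piece, fix $v_1\in\Str_1$ and set $v_2:=\varphi(v_1)$. Then $M_{1,v_1}$ and $M_{2,v_2}$ are oriented $S^1$-bundles over surfaces of the same genus $\bm{g}_1(v_1)=\bm{g}_2(v_2)$, the same number of boundary circles, and with the same Euler number $\bm{e}_1(v_1)=\bm{e}_2(v_2)$, all of which are encoded in the equivalence map $\varphi$. The prescribed boundary identifications $\alpha E$ inherited from the adjacent $\gtg_{P_1}$ preserve fibers on every boundary torus, so by Theorem~\ref{thm:S1-bdl_gene} these boundary data extend to a fiber-preserving homeomorphism $\gtg_{v_1}\colon M_{1,v_1}\to M_{2,v_2}$; Waldhausen-type rigidity (Theorem~\ref{thm:Wald-1}) allows this $\gtg_{v_1}$ to be chosen homotopic to the restriction of $h$. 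Gluing the $\gtg_{P_1}$ and $\gtg_{v_1}$ along the tori of $\TT_1\mapsto\TT_2$ produces $\Psi_\BM$; properties (ii) and (iii) are built in, (iv) is the meridian clause of the local lemmas combined with the uniformity of $\alpha$, and (i) is obtained by assembling the local homotopies (which already agree on common boundary tori up to isotopy) into a global one. The identity $\Psi_{\BM\ast}=\Psi_h^\infty$ then follows from (i) and the identification $\BM(\mcC_i)\simeq\mcT(\mcC_i)\setminus\mcC_i$.

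The main obstacle is the sign-consistency argument: combining the per-point data of Lemmas~\ref{lem:Homeo_smooth_node} and~\ref{lem:Homeo_Sing} into a single global $\alpha$ requires both the meridian-conjugacy argument within each irreducible component and the topological connectedness of $\mcC_1\subset\PP^2$ to link distinct components, since the $S^1$-bundle extension in the strict-transform pieces only succeeds when the boundary trivializations from neighbouring $\BM(\mcC_1,P_1)$ match with one common sign.
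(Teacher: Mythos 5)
Your proposal is correct and follows essentially the same route as the paper: decompose $\BM(\mcC_i)$ into the singular-point pieces $\BM(\mcC_i,P_i)$ handled by Lemmas~\ref{lem:Homeo_smooth_node} and~\ref{lem:Homeo_Sing} and the strict-transform $S^1$-bundle pieces handled via the equality $\bm{e}_1(v_1)=\bm{e}_2(\varphi(v_1))$. The only real difference is the order of assembly: the paper first normalizes $h$ on the pieces $M_{i,v_i}$ for $v_i\in\Str_i$ using the isotopy theorem for closed tubular neighborhoods (so fiber-preservation there, homotopy to $h$, and the uniformity of the sign $\alpha$ come for free from connectedness of each $M_{1,v_1}$), and only then corrects $h$ near the singular points, whereas you build outward from the singular points, which is why you need the explicit sign-propagation and ``homotopic to $h$'' verifications on the strict-transform pieces.
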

\begin{proof}
Let $h'\colon \BM(\mcC_1)\to\BM(\mcC_2)$ be the map constructed in Remark~\ref{rem:comb_smooth}.  
For $v_1\in\Str_1$ and $v_2=\varphi(v_1)$, since $\bm{e}_1(v_1)=\bm{e}_2(v_2)$, we may also assume that $h'$ preserves fibers and trivializations of boundary components of $\red p_{v_i}\colon M_{v_i}\to B_{v_i}$. 
By Lemma~\ref{lem:Homeo_smooth_node} and \ref{lem:Homeo_Sing}, 
we obtain a desired homeomorphism $\Psi_\BM\colon\BM(\mcC_1)\to\BM(\mcC_2)$ after a certain homotopy near $\bigcup_{P_2\in\Sing(\mcC_2)}U(P_2)$. 
\end{proof}

\section{$G$-combinatorial type of plane curves}\label{sec:G-comb}

In this section, we define \textit{$G$-combinatorial type} as a modified plumbing graph together with certain information, which describe the embedded topology of ${\phi}^{-1}(\mcC)$ in its regular neighborhood for a plane curve $\mcC\subset\PP^2$ and {\red the $G$-cover $\phi\colon X\to\PP^2$ given by a surjection $\theta\colon\pi_1(\PP^2\setminus\mcC)\to G$}. 

Let $\mcC\subset\PP^2$ be a plane curve, and let $\sigma\colon Y\to\PP^2$ be the minimal good embedded resolution of $\mcC$, and put $\mcC':=\sigma^{-1}(\mcC)$. 
Let $\Comb(\mcC)=(\Gamma_\mcC,\bm{g}_\mcC,\bm{e}_\mcC;\Str_\mcC)$ be the combinatorics of $\mcC$, let $p_v\colon M_v\to B_v$ and $T_y\subset M_{t(y)}$ be the $S^1$-bundle of $v\in\mcV(\Gamma_\mcC)$ and the torus corresponding to $y\in\mcY(\Gamma_\mcC)$, respectively, as in the previous section. 
Let $G$ be a finite group. 
We assume that there is a surjective homomorphism $\theta\colon \pi_1(\PP^2\setminus\mcC)\to G$. 
Then $\theta$ induces the $G$-covers $\phi\colon X\to\PP^2$ and $\tilde{\phi}\colon \widetilde{X}\to Y$ branched along sub-curves of $\mcC$ and $\mcC'$, respectively. 
The minimal resolution $\tilde{\sigma}'\colon\widetilde{X}'\to \widetilde{X}$ is given in the proof of \cite[Proposition~2.2]{laufer1971} explicitly. 
By Stein factorization, there is a morphism $\tilde{\sigma}\colon \widetilde{X}\to X$ such that the following diagram is commutative: 
\begin{align*}
\begin{aligned}
\begin{tikzpicture}
	\node (a1) at (0, 0) {$\PP^2$};
	\node (a2) at (2.5, 0) {$Y$};
	\node (b1) at (0, 1.3) {$X$};
	\node (b2) at (2.5, 1.3) {$\widetilde{X}$};
	\node (b3) at (5, 1.3) {$\widetilde{X}'$};
	\draw (a1) node [left=6pt] {$\,\mcC\,\subset$};
	\draw (a2) node [right=5pt] {$\supset\, \mcC'$};
	\draw [->] (b1) to node [left] {\footnotesize $\phi$} (a1);
	\draw [->] (b2) to node [right] {\footnotesize $\tilde{\phi}$} (a2);
	\draw [->] (b2) to node [above] {\footnotesize $\tilde{\sigma}$} (b1);
	\draw [->] (a2) to node [above] {\footnotesize $\sigma$} (a1);
	\draw [->] (b3) to node [above] {\footnotesize $\tilde{\sigma}'$} (b2);
\end{tikzpicture}
\end{aligned}
\end{align*}
Put $\widetilde{\mcC}:=\tilde{\phi}^{-1}(\mcC')\subset \widetilde{X}$, and $\BM^\theta(\mcC):=\phi^{-1}(\BM(\mcC))$. 
We call $\BM^\theta(\mcC)$ a \textit{boundary manifold} of $\widetilde{\mcC}$ in $\widetilde{X}$. 
The modified plumbing graph $\Gamma_\plumb^\modi(\mcC,\theta)$ corresponding to $\BM^\theta(\mcC)$ is defined in \cite{hironaka2000}, which can be constructed as follows:

It follows from the minimal resolution $\tilde{\sigma}'$ that $\widetilde{\mcC}\subset \widetilde{X}$ consists of just two branches locally at each ${P}\in\Sing(\widetilde{\mcC})=\tilde{\phi}^{-1}(\Sing(\mcC'))$. 
Since each irreducible component of $\mcC'$ is smooth, if $D_1\cap D_2\ne\emptyset$ for $D_1,D_2\in\Irr(\widetilde{\mcC})$ with $D_1\ne D_2$, then $\tilde{\phi}(D_1)\ne\tilde{\phi}(D_2)$. 
Hence we can define the dual graph $\Gamma_\mcC^\theta$ of $\widetilde{\mcC}$ as follows by using an arbitrary total order $\prec$ on $\Irr(\mcC')$:
\begin{itemize}
	\item $\mcV(\Gamma_\mcC^\theta):=\left\{v_D \ \middle| \  D\in\Irr\big( \widetilde{\mcC} \big)\right\}$ 
	and $\mcY(\Gamma_\mcC^\theta):=\left\{ y_{{P}}, \  \overline{y}_{P} \ \middle| \ {P}\in\Sing(\widetilde{\mcC}) \right\}$,
	\item if ${P}\in D_1\cap D_2$ for $D_1,D_2\in\Irr(\widetilde{\mcC})$ with $\red \tilde{\phi}(D_1)\precneqq \tilde{\phi}(D_2)$, then  $o(y_P):=v_{D_1}$ and $t(y_P):=v_{D_2}$. 
\end{itemize}
Let $i_v\colon M_v\to \PP^2\setminus\mcC$ and $i_y\colon T_y\to\PP^2\setminus\mcC$ be inclusions for each $v\in\mcV(\Gamma_\mcC)$ and $y\in\mcY(\Gamma_\mcC)$, respectively, and put
\begin{align*}
\begin{aligned}
	G_v&:=\theta\circ {i_v}_\ast\big(\pi_1(M_v)\big),
	&
	H_v&:=\theta\circ {i_v}_\ast\Big( \big\langle  [F_v]  \big\rangle  \Big), 
	&
	G_y&:=\theta\circ {i_y}_\ast\big(\pi_1(T_y)\big)
\end{aligned}
\end{align*}
for each $v\in\mcV(\Gamma_\mcC)$ and $y\in\mcY(\Gamma_\mcC)$,
where $F_v$ is a fiber of $p_v$, and $\langle [F_v]\rangle$ is the subgroup of $\pi_1(M_v)$ generated by the class $[F_v]$. 
The subgroups $G_v$, $H_v$ and $G_y$ are uniquely determined up to conjugate in $G$. 
Since the kernel of $\theta_y:=\theta\circ {i_y}_\ast$ has a finit index in $\pi_1(T_y)\cong\ZZ\oplus\ZZ$ for each $y\in\mcY(\Gamma_\mcC)$, we have $\ker \theta_y\cong \ZZ\oplus\ZZ$. 
We take the basis of $\pi_1(T_y)$ according to the trivialization $\tau_y\colon S^1\times S^1\to T_y$ for each $y\in\mcY(\Gamma_\mcC)$. 
We fix the unique basis of $\ker \theta_y\cong \ZZ\oplus\ZZ$ so that the inclusion 
$\ker\theta_y\to\pi_1(S^1\times S^2)$ induced by $\tau_y$
is represent by a matrix
\begin{align}\label{eq:matrix} 
\bm{m}_\theta(y):=\begin{pmatrix} c(y) & 0 \\ b(y) & a(y) \end{pmatrix} 
\end{align}
with $0\leq b(y)<a(y)$ and $c(y)>0$. 
We can regard $\bm{m}_\theta$ as a map $\bm{m}_\theta\colon \mcY(\Gamma_\mcC)\to \fin(S^1\times S^1)$. 
Since the following diagram is commutative, 
\[ \begin{tikzpicture}
	\node (a1) at (0,1.3) {$S^1\times S^1$};
	\node (a2) at (4,1.3) {$S^1\times S^1$};
	\node (b1) at (0,0) {$T_{\overline{y}}$};
	\node (b2) at (4,0) {$T_{y}$};
	\draw [->] (a1) to node [above] {\footnotesize $J=(\begin{smallmatrix} 0&1\\1&0 \end{smallmatrix})$} (a2);
	\draw [->] (b1) to node [above] {\footnotesize $\sim$} (b2);
	\draw [->] (a1) to node [left] {\footnotesize $\tau_{\overline{y}}$} (b1);
	\draw [->] (a2) to node [right] {\footnotesize $\tau_{y}$} (b2);
\end{tikzpicture}\]
$\bm{m}_\theta(y)_\ast$ and $(J\bm{m}_\theta(\overline{y}))_\ast$ have the same image in $\pi_1(S^1\times S^1)\cong\ZZ\oplus\ZZ$. 
We define $\bm{g}_\mcC^\theta\colon \mcV(\Gamma_\mcC^\theta)\to\ZZ$, $\bm{e}_\mcC^\theta\colon \mcV(\Gamma_\mcC^\theta)\to \ZZ$ and $\bm{m}_\mcC^\theta\colon \mcY(\Gamma_\mcC^\theta)\to\fin(\ZZ\times\ZZ)$ by 
\begin{align*} 
\bm{g}_\mcC^\theta(w)&:= \frac{1}{2}\left( 2- \frac{\# G_{v}}{\# H_{v}} \Big(2-2\bm{g}_\mcC(v)-\bm{d}(v)\Big) -\sum_{\substack{y\in\mcY(\Gamma_\mcC), \ t(y)=v}} \frac{\# G_{v}}{\# G_y} \right), \\
\bm{e}_\mcC^\theta(w)&:=\frac{\# G_v}{(\# H_v)^2} \bm{e}_\mcC(v)-\sum_{\substack{y\in\mcY(\Gamma_\mcC), \  t(y)=v}}\frac{b(y)}{a(y)}\frac{ \# G_v}{\#G_y}, \\
\bm{m}_\mcC^\theta(z)&:=\bm{m}_\theta(y)
\end{align*}
for each $w\in\mcV(\Gamma_\mcC^\theta)$ and $z\in\mcY(\Gamma_\mcC^\theta)$, where $v:=\pr_\mcC^\theta(w)$ and $y:=\pr_\mcC^\theta(z)$ (see \cite[Lemma~8.2 and 9.1]{hironaka2000}). 
Since $H_v\subset G_v$ for each $v\in\mcV(\Gamma_\mcC)$, the image of $\bm{g}_\mcC^\theta$ is in $\ZZ$.  
Note that, for each $D\in\Irr(\widetilde{\mcC})$, $\bm{e}_\mcC^\theta(v_D)$ (resp. $\bm{g}_\mcC^\theta(v_D)$) is equal to the self-intersection number (resp. the genus) of the strict transformation $D'\subset \widetilde{X}'$ of $D$. 
Hence 
\[ \Gamma^\modi_\plumb(\mcC,\theta):=(\Gamma_\mcC^\theta,\bm{g}_\mcC^\theta,\bm{e}_\mcC^\theta,\bm{m}_\mcC^\theta) \] 
is a modified plumbing graph. 
By \cite[Section~5]{hironaka2000}, $\BM^\theta(\mcC)$ with the graph structure $\TT_\mcC^\theta:=\phi^{-1}(\TT_\mcC)$ is the plumbed manifold $\PM(\Gamma_\plumb^\modi(\mcC,\theta))$. 

\begin{rem}\label{rem:modified plumbing graph}
	$\Gamma_\plumb^\modi(\mcC,\theta)$ can be constructed from the algebraic data of $\tilde{\phi}\colon\widetilde{X}\to Y$ and $\widetilde{\mcC}':=(\tilde{\sigma}')^{-1}(\widetilde{\mcC})$ in the minimal resolution $\widetilde{X}'$ as follows: 
	
	The graph $\Gamma_\mcC^\theta$ is obtained from the dual graph $\widetilde{\Gamma}_\mcC'$ of $\widetilde{\mcC}'$ by contracting all vertices corresponding to the exceptional divisors of $\tilde{\sigma}'$. 
	The maps $\bm{e}_\mcC^\theta$ and $\bm{g}_\mcC^\theta$ are induced by the self-intersection numbers and the genera on $\widetilde{\Gamma}_\mcC'$ as mentioned above. 
	
	For $y\in\mcY(\Gamma_\mcC^\theta)$, $\bm{m}_{\mcC}^\theta(y)$ is described as follows. 
	For $v\in\mcV(\Gamma_\mcC^\theta)$, let $\omega_v\in\ZZ_{>0}$ be the ramification index of $\tilde{\phi}$ at the component of $\widetilde{\mcC}\subset \widetilde{X}$ corresponding to $v$. 
	We may assume by \cite{laufer1971} that the contracted part in $\widetilde{\Gamma}_\mcC'$ over $y$ is the following tree
	\[ \begin{tikzpicture}
		\coordinate (a1) at (0,0);
		\coordinate (b1) at (1,0);
		\coordinate (b2) at (2,0);
		\coordinate (b3) at (4.5,0);
		\coordinate (a2) at (5.5,0);
		
		\draw (a1) -- (2.5,0);
		\draw[dashed] (b2) -- (b3);
		\draw (a2) -- (4,0);
		\draw (-0.5,0.15) -- (a1) -- (-0.5,-0.15);
		\draw[dashed] (-1,0.3) -- (a1) -- (-1,-0.3);
		\draw (6,0.15) -- (a2) -- (6,-0.15);
		\draw[dashed] (6.5,0.3) -- (a2) -- (6.5,-0.3);
		\draw[fill] (a1) circle [radius=2pt] node[below]{$t(y)'$}; 
		\draw[fill] (b1) circle [radius=2pt] node[above]{$-k_1$}; 
		\draw[fill] (b2) circle [radius=2pt] node[above]{$-k_2$}; 
		\draw[fill] (b3) circle [radius=2pt] node[above]{$-k_s$}; 
		\draw[fill] (a2) circle [radius=2pt] node[below]{$o(y)'$}; 
		\draw[fill] (-0.8,0.12) circle [radius=0.5pt];
		\draw[fill] (-0.8,0) circle [radius=0.5pt];
		\draw[fill] (-0.8,-0.12) circle [radius=0.5pt];
		\draw[fill] (6.3,0.12) circle [radius=0.5pt];
		\draw[fill] (6.3,0) circle [radius=0.5pt];
		\draw[fill] (6.3,-0.12) circle [radius=0.5pt];
	\end{tikzpicture} \]
	for $k_i\geq 2$ for $i=1,\dots,s$, where $t(y)',o(y)'\in\mcV(\widetilde{\Gamma}_\mcC')$ are the vertices corresponding to $t(y),o(y)\in\mcV(\Gamma_\mcC^\theta)$, respectively. 
	Let $\alpha_y,\beta_y\in\ZZ$ be coprime integers satisfying $\beta_y\ne0$ and
	\[ \frac{\alpha_y}{\beta_y}=[k_1,k_2,\dots,k_s],
	\]
	where $[k_1,\dots,k_s]$ is the continuous fraction defined in \cite[p.318]{neumann1981}. 
	Then we can check from the proof of \cite[Proposition~2.2]{laufer1971} that $\bm{m}_\mcC^\theta(y)$ is the matrix in (\ref{eq:matrix}) with $a(y)=\omega_{o(y)}\alpha_y$, $b(y)=\omega_{o(y)}\beta_y$ and $c(y)=\omega_{t(y)}$. 
\end{rem}

The $G$-cover $\tilde{\phi}\colon\widetilde{X}\to Y$ induces {\red the} map $\pr_\mcC^\theta\colon \Gamma_\mcC^\theta\to\Gamma_\mcC$ defined by 
\[ \pr_\mcC^\theta(v_D):=v_{\tilde{\phi}(D)} ,\qquad \pr_\mcC^\theta(y_P):=y_{\tilde{\phi}(P)}, \qquad \pr_\mcC^\theta(\overline{y_P}):=\overline{y_{\tilde{\phi}(P)}} \]
for each $D\in\Irr(\widetilde{\mcC})$ and $P\in\Sing(\widetilde{\mcC})$, 
where $v_{\tilde{\phi}(D)}\in\mcV(\Gamma_\mcC)$ and $y_{\tilde{\phi}(P)}\in\mcY(\Gamma_\mcC)$ are the vertex and the edge corresponding to $\tilde{\phi}(D)\in\Irr(\mcC')$ and $\tilde{\phi}(P)\in\Sing(\mcC')$, respectively, 
such that 
\[ o(\pr_\mcC^\theta(y_P))=\pr_\mcC^\theta(o(y_P)), \qquad t(\pr_\mcC^\theta(y_P))=\pr_\mcC^\theta(t(y_P)). \]
Moreover, the action of $G$ on $\widetilde{X}$ induces {\red the} action $\rho_\mcC^\theta\colon G\times \Gamma_\mcC^\theta\to\Gamma_\mcC^\theta$ defined by 
\[ \rho_\mcC^\theta(g,v_D):=v_{g{\cdot}D}, \qquad \rho_\mcC^\theta(g,y_P):=y_{g{\cdot}P}, \qquad \rho_\mcC^\theta(g,\overline{y_P}):=\overline{y_{g{\cdot}P}} \]
for each $g\in G$, $D\in\Irr(\widetilde{\mcC})$ and $P\in\Sing(\widetilde{\mcC})$.

\begin{defin}\label{def:G-comb}
With the above notation, we define the \textit{$G$-combinatorial type} (or the \textit{$G$-combinatorics} for short) $\Comb_G(\mcC,\theta)$ of $\mcC$ with respect to $\theta$ as the following $6$-tuple 
\begin{align}\label{eq:G-combinatorics} 
\Comb_G(\mcC,\theta):=(\Gamma_\mcC^\theta, \bm{g}_\mcC^\theta, \bm{e}_\mcC^\theta, \bm{m}_\mcC^\theta; \rho_\mcC^\theta,\pr_\mcC^\theta). 
\end{align}
For simplicity, let $g{\cdot} \bullet$ denote $\rho_\mcC^\theta(g, \bullet)$ for each $g\in G$ and $\bullet\in\mcV(\Gamma_\mcC^\theta)\cup\mcY(\Gamma_\mcC^\theta)$. 
\end{defin}

For each $w\in\mcV(\Gamma_\mcC^\theta)$, let $p_w^\theta\colon M_w^\theta\to B_w^\theta$ denote the $S^1$-bundle of $w$, where $M_w^\theta$ is a closed subset of $\BM^\theta(\mcC)$. 
For each $z\in\mcY(\Gamma_\mcC^\theta)$, let $T_z^\theta\subset M_{t(z)}^\theta$ be the boundary component of $M_{t(z)}^\theta$ corresponding to $z$. 
By the definition of $\pr_\mcC^\theta$ and $\rho_\mcC^\theta$, we have 
\begin{align*}
	\tilde{\phi}(M_w^\theta)&=M_v\subset\BM(\mcC), & 
	\tilde{\phi}(T_z^\theta)&=T_y\subset M_{t(y)}, &
	g{\cdot}M_w^\theta &=M_{g{\cdot}w}^\theta, &
	g{\cdot}T_z^\theta &=T_{g{\cdot}z}^\theta 
\end{align*}
for each $w\in\mcV(\Gamma_\mcC^\theta)$ and $z\in\mcY(\Gamma_\mcC^\theta)$, where $v:=\pr_\mcC^\theta(w)$ and $y:=\pr_\mcC^\theta(z)$. 

For two plane curves $\mcC_i$ with surjections $\theta_i\colon \pi_1(\PP^2\setminus\mcC_i)\to G$ ($i=1,2$), we describe $\Comb_G(\mcC_i,\theta_i)$ by 
$ \Comb_G(\mcC_i,\theta_i)=(\Gamma_i^{\theta},\bm{g}_i^{\theta}, \bm{e}_i^\theta, \bm{m}_i^\theta; \rho_{i}^\theta, \pr_i^\theta)$. 

\begin{defin}\label{def:G-equiv}
For two plane curves $\mcC_i$ with surjections $\theta_i\colon \pi_1(\PP^2\setminus\mcC_i)\to G$ and an equivalent map $\varphi\colon \Comb(\mcC_1)\to\Comb(\mcC_2)$, a \textit{$G$-equivalence map $\varphi_G^\theta\colon \Comb_G(\mcC_1,\theta_1)\to\Comb_G(\mcC_2,\theta_2)$ with respect to $\varphi$} is an isomorphism $\varphi_G^\theta\colon \Gamma_{1}^{\theta}\to\Gamma_{2}^{\theta}$ of graphs such that 
 \begin{itemize}
 	\item $\bm{g}_{1}^{\theta}=\bm{g}_{2}^{\theta}\circ\varphi_G^\theta$, \ 
 	$\bm{e}_{1}^{\theta}=\bm{e}_{2}^{\theta}\circ\varphi_G^\theta$, \ 
 	$\bm{m}_{1}^{\theta}=\bm{m}_{2}^{\theta}\circ\varphi_G^\theta$, 
 	\item there is an automorphism $\tau\colon G\to G$ such that 
 	\[\varphi_G^\theta(g{\cdot}w)=\tau(g){\cdot}\varphi_G^\theta(w), \qquad \varphi_G^\theta(g{\cdot}z)=\tau(g){\cdot}\varphi_G^\theta(z)\] for each $w\in\mcV(\Gamma_{1}^{\theta})$ and $z\in\mcY(\Gamma_{1}^{\theta})$, 
 	\item $\varphi\circ\pr_{1}^{\theta}=\pr_{2}^{\theta}\circ\varphi_G^\theta$. 
 \end{itemize}
\[ \begin{tikzpicture}
	\node (a1) at (0,0) {$\Gamma_1$};
	\node (a2) at (3,0) {$\Gamma_2$};
	\node (b1) at (0,1.3) {$\Gamma_1^\theta$};
	\node (b2) at (3,1.3) {$\Gamma_2^\theta$};
	\draw[->] (a1) -- node [above] {\footnotesize $\varphi$} (a2);
	\draw[->] (b1) -- node [above] {\footnotesize $\varphi_G^\theta$} (b2);
	\draw[->] (b1) -- node [right] {\footnotesize $\pr_1^\theta$} (a1);
	\draw[->] (b2) -- node [left] {\footnotesize $\pr_2^\theta$} (a2);
	\draw[->] (b1) to [out=150, in=210, looseness=4] node [left] {\footnotesize $G$} (b1);
	\draw[->] (b2) to [out=30, in=-30, looseness=4] node [right] {\footnotesize $\tau(G)$} (b2);
\end{tikzpicture} \]
\end{defin}

\begin{thm}\label{thm:invariance_G-comb}
Let $\mcC_i\subset\PP^2$ be two plane curves with surjections $\theta_i\colon \pi_1(\PP^2\setminus\mcC_i)\to G$. 
Assume that there are a homeomorphism $h\colon \PP^2\to\PP^2$  and an automorphism $\tau\colon G\to G$ such that $h(\mcC_1)=\mcC_2$ and $\tau\circ\theta_1=\theta_2\circ h_\ast$. 
Let $\varphi\colon \Comb(\mcC_1)\to\Comb(\mcC_2)$ be the equivalence map induced by $h$. 
Then there is a $G$-equivalence map $\varphi_G^\theta\colon \Comb_G(\mcC_1,\theta_1)\to\Comb_G(\mcC_2,\theta_2)$ with respect to~$\varphi$. 
\[ \begin{tikzpicture}
	\node (a1) at (0,1.3) {$\pi_1(\PP^2\setminus\mcC_1)$};
	\node (a2) at (3.5,1.3) {$\pi_1(\PP^2\setminus\mcC_2)$};
	\node (b1) at (0,0) {$G$};
	\node (b2) at (3.5,0) {$G$};
	
	\draw [->] (a1) -- node [above] {\footnotesize $h_\ast$} (a2);
	\draw [->] (b1) -- node [above] {\footnotesize $\tau$} (b2);
	\draw [->] (a1) -- node [left] {\footnotesize $\theta_1$} (b1);
	\draw [->] (a2) -- node [right] {\footnotesize $\theta_2$} (b2);
\end{tikzpicture} \]
\end{thm}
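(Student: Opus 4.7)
The plan is to derive $\varphi_G^\theta$ by lifting the boundary homeomorphism of Theorem~\ref{thm:homeo_boundary} to the boundaries of the $G$-covers and reading off the induced graph isomorphism. First, apply Theorem~\ref{thm:homeo_boundary} to obtain a homeomorphism $\Psi_\BM\colon\BM(\mcC_1)\to\BM(\mcC_2)$ that induces the equivalence $\varphi$, sends $\TT_1$ bijectively to $\TT_2$, restricts on each $S^1$-bundle piece $M_{1,v_1}$ to a fiber-preserving homeomorphism onto $M_{2,\varphi(v_1)}$, and, with respect to the fixed trivializations, acts on each boundary torus $T_{1,y_1}$ by the matrix $\alpha E$ for a single $\alpha\in\{\pm1\}$. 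Crucially, $\Psi_\BM$ is homotopic to $h|_{\BM(\mcC_1)}$ inside $U(\mcC_2)\setminus\mcC_2$ after restricting $h$ to suitable regular neighborhoods of the $\mcC_i$.

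Second, lift $\Psi_\BM$ to the $G$-covers. Writing $j_i\colon\BM(\mcC_i)\hookrightarrow\PP^2\setminus\mcC_i$ for the inclusions, the unbranched $G$-cover $\BM^{\theta_i}(\mcC_i)\to\BM(\mcC_i)$ is classified by $\theta_i\circ j_{i\ast}$. The homotopy $\Psi_\BM\simeq h|_{\BM(\mcC_1)}$ together with the hypothesis $\tau\circ\theta_1=\theta_2\circ h_\ast$ gives
\[ \theta_2\circ j_{2\ast}\circ(\Psi_\BM)_\ast=\tau\circ\theta_1\circ j_{1\ast}, \]
so the lifting criterion is satisfied and a homeomorphism $\widetilde{\Psi}_\BM\colon\BM^{\theta_1}(\mcC_1)\to\BM^{\theta_2}(\mcC_2)$ covering $\Psi_\BM$ exists. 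For each $g\in G$ the composition $\widetilde{\Psi}_\BM\circ g$ is again a lift of $\Psi_\BM$, hence equals $\sigma(g)\cdot\widetilde{\Psi}_\BM$ for a unique $\sigma(g)\in G$, and $\sigma\colon G\to G$ is an automorphism. Comparing monodromies along meridians via Theorem~\ref{thm:meridian}(ii) and $\tau\circ\theta_1=\theta_2\circ h_\ast$ identifies $\sigma$ with $\tau$ up to an inner automorphism, which is killed by replacing $\widetilde{\Psi}_\BM$ with $g_0\cdot\widetilde{\Psi}_\BM$ for a suitable $g_0\in G$; thus $\widetilde{\Psi}_\BM$ can be taken $\tau$-equivariant.

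Third, extract $\varphi_G^\theta$. Since $\widetilde{\Psi}_\BM$ covers $\Psi_\BM$ and the latter respects graph structures, $\widetilde{\Psi}_\BM$ respects the pulled-back structures $\TT_i^\theta=\tilde\phi_i^{-1}(\TT_i)$; set $\varphi_G^\theta(w):=w'$ whenever $\widetilde{\Psi}_\BM(M_{1,w}^\theta)=M_{2,w'}^\theta$, and analogously on edges using the tori $T_z^\theta$. The relation $\varphi\circ\pr_1^\theta=\pr_2^\theta\circ\varphi_G^\theta$ is then the tautology that $\widetilde{\Psi}_\BM$ covers $\Psi_\BM$, while $\tau$-equivariance of $\widetilde{\Psi}_\BM$ yields $\varphi_G^\theta(g{\cdot}w)=\tau(g){\cdot}\varphi_G^\theta(w)$ and its edge analogue. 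Hironaka's formulas express $\bm{g}_i^\theta$ and $\bm{e}_i^\theta$ in terms of $\bm{g}_i,\bm{e}_i,\bm{d}$ and the orders of the stabilizer subgroups $G_v,H_v,G_y$; the former are preserved by $\varphi$, the latter because $\tau$ carries the subgroups for $\mcC_1$ to those for $\mcC_2$ up to conjugacy. For $\bm{m}_i^\theta$, since $\Psi_\BM$ acts on each $T_{1,y_1}$ by $\alpha E=\pm\mathrm{id}$, the sublattices $\ker\theta_{1,y_1}$ and $\ker\theta_{2,\varphi(y_1)}$ agree in the trivialization basis, and the normalization $0\le b<a$, $c>0$ forces the two normalized matrices to coincide.

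The principal obstacle I expect is the $\tau$-equivariance of the lift in the second step: one must check not merely that \emph{some} lift exists, but that one of them twists the $G$-action by precisely the prescribed $\tau$ rather than by an unrelated automorphism. The key input is the meridian-preservation clause Theorem~\ref{thm:meridian}(ii), which combined with $\tau\circ\theta_1=\theta_2\circ h_\ast$ pins down $\sigma$ on the monodromies around each branch component and forces $\sigma=\tau$ modulo an inner automorphism removable by a single deck-transformation adjustment.
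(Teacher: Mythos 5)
Your proposal is correct and follows essentially the same route as the paper: apply Theorem~\ref{thm:homeo_boundary} to get $\Psi_\BM$, lift it to a $\tau$-equivariant homeomorphism of the pulled-back boundary manifolds $\BM^{\theta_i}(\mcC_i)$ using $\tau\circ\theta_1=\theta_2\circ h_\ast$ (the paper delegates this lifting step to \cite[Theorem~3.4]{shirane2019}, where you spell out the covering-space argument and the inner-automorphism adjustment), and then read off $\varphi_G^\theta$ from the preserved graph structures and verify the weight compatibilities via the meridian clause. The only cosmetic difference is that you make explicit the equivariance verification that the paper absorbs into a citation.
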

\begin{proof}
Let $\sigma_i\colon Y_i\to\PP^2$ be the minimal good embedded resolution of $\mcC_i$, and put $\mcC'_{i}:=\sigma_i^{-1}(\mcC_i)$. 
We  identify $Y_i\setminus\mcC_i'$ with $\PP^2\setminus\mcC_i$ via $\sigma_i$. 
Let $\tilde{\phi}_i\colon \widetilde{X}_i\to Y_i$ be the $G$-covers induced by $\theta_i$ for $i=1,2$, and put $\widetilde{\mcC}_{i}:=\tilde{\phi}_i^{-1}(\mcC'_{i})$. 
Let $\iota_i'\colon \BM(\mcC_i)\to Y_i\setminus\mcC'_i$ be the inclusion map, and let $\Psi_\BM\colon\BM(\mcC_1)\to\BM(\mcC_2)$ be the homeomorphism constructed from $h$ by Theorem~\ref{thm:homeo_boundary}. 
 By (i) in Theorem~\ref{thm:homeo_boundary}, we have $\iota'_{2\ast}\circ\Psi_{\BM\ast}=h_\ast\circ\iota'_{1\ast}$. 
\[ \begin{tikzpicture}
	\node (a1) at (3.5,1.3) {$\pi_1(Y_1\setminus\mcC'_1)$};
	\node (a2) at (3.5,0) {$\pi_1(Y_2\setminus\mcC'_2)$};
	\node (b1) at (0,1.3) {$\pi_1(\BM(\mcC_1))$};
	\node (b2) at (0,0) {$\pi_1(\BM(\mcC_2))$};
	\node (c1) at (6,1.3) {$G$};
	\node (c2) at (6,0) {$G$};
	
	\draw [->] (a1) -- node [left] {\footnotesize $h_\ast$} (a2);
	\draw [->] (b1) -- node [left] {\footnotesize $\Psi_{\BM\ast}$} (b2);
	\draw [->] (b1) -- node [above] {\footnotesize $\iota'_{1\ast}$} (a1);
	\draw [->] (b2) -- node [above] {\footnotesize $\iota'_{2\ast}$} (a2);
	\draw[->] (a1) -- node [above] {\footnotesize $\theta_1$} (c1);
	\draw[->] (a2) -- node [above] {\footnotesize $\theta_2$} (c2);
	\draw[->] (c1) -- node [right] {\footnotesize $\tau$} (c2);
\end{tikzpicture} \]
Hence this commutative diagram induces a homeomorphism $\Psi^\theta_\BM\colon\BM^\theta(\mcC_1)\to\BM^\theta(\mcC_2)$ such that $\tilde{\phi}_2\circ\Psi^\theta_\BM=\Psi_\BM\circ\tilde{\phi}_1$ and $\Psi_\BM^\theta(g\cdot P)=\tau(g)\cdot\Psi^\theta_\BM(P)$ for any $P\in\BM^\theta(\mcC_1)$ and $g\in G$ (cf.\ \cite[Theorem~3.4]{shirane2019}). 
By (ii) in Theorem~\ref{thm:homeo_boundary}, we have $\Psi_\BM^\theta(\TT_1^\theta)=\TT_2^\theta$, where $\TT_i^\theta:=\tilde{\phi}_i^{-1}(\TT_i)$ is the graph structure of $\BM^\theta(\mcC_i)$ with respect to $\Gamma_\plumb^\modi(\mcC_i,\theta_i)$. 
Thus $\Psi_\BM^\theta$ induces an isomorphism $\varphi^\theta_G\colon\Gamma_1^\theta\to\Gamma_2^\theta$ such that 
\begin{itemize}
	\item $\red \Psi_\BM^\theta(M_{w_1}^\theta)=M_{w_2}^\theta$ for each $w_1\in\mcV(\Gamma_1^\theta)$ and $w_2:=\varphi_G^\theta(w_1)$; 
	\item $\red \Psi_\BM^\theta(T_{z_1}^\theta)=T^\theta_{z_2}$ for each $z_1\in\mcY(\Gamma_{1}^\theta)$ and $z_2:=\varphi_G^\theta(z_1)$; 
	\item $\varphi^\theta_G(g{\cdot}w_1)=\tau(g){\cdot}\varphi_G^\theta(w_1)$ and $\varphi^\theta_G(g{\cdot}z_1)=\tau(g){\cdot}\varphi_G^\theta(z_1)$ for each $g\in G$, $w_1\in\mcV(\Gamma_1^\theta)$ and $z_1\in\mcY(\Gamma_1^\theta)$, and
	\item $\varphi\circ\pr_1^\theta=\pr_2^\theta\circ\varphi_G^\theta$. 
\end{itemize}
Since the trivialization $\red \tau_{y_i}$ of $\red T_{y_i}\subset M_{t(y_i)}$ is determined by meridians $\red \gtm_{t(y_i)}$ and $\red \gtm_{o(y_i)}$ for each $y_i\in\mcY(\Gamma_i)$, it follows from (iv) in Theorem~\ref{thm:homeo_boundary} that $\bm{g}^\theta_1=\bm{g}_2^\theta\circ\varphi_G^\theta$, $\bm{e}_1^\theta=\bm{e}_2^\theta\circ\varphi_G^\theta$ and $\bm{m}_1^\theta=\bm{m}_2^\theta\circ\varphi_G^\theta$. 
Therefore, $\varphi_G^\theta$ is a $G$-equivalence map with respect to $\varphi$.
\end{proof}

\begin{rem}
With the same notation in Theorem~\ref{thm:invariance_G-comb}, 
we may assume that the automorphism $\tau\colon G\to G$ is the identity map of $G$ by replacing the surjection $\theta_2\colon \pi_1(\PP^2\setminus\mcC_2)\to G$  with $\tau^{-1}\circ\theta_2$, which does not change $\Comb_G(\mcC_2,\theta_2)$ except for the action~$\rho_{2}^\theta$. 
\end{rem}

\begin{rem}
Let $\mcC\subset\PP^2$ be a plane curve. 
Let $\theta\colon \pi_1(\PP^2\setminus\mcC)\to G$ be a surjection, and let $\phi\colon X\to\PP^2$ be the $G$-cover induced by $\theta$. 
Let $\sigma\colon Y\to\PP^2$ be the minimal good embedded resolution of $\mcC$. 
If there is a composition of (minimal) blowing-ups $\sigma''\colon Y''\to Y$ such that the $\CC(X)$-normalization $\tilde{\phi}''\colon\widetilde{X}''\to{Y''}$ is a $G$-cover with $\widetilde{X}''$ smooth, we may define a $G$-combinatorics $\Comb_G^\sm(\mcC,\theta)$ as the plumbing graph 
\[ \Comb_G^\sm(\mcC,\theta)=(\widetilde{\Gamma}_\mcC^\theta,\tilde{\bm{e}}_\mcC^\theta,\tilde{\bm{g}}_\mcC^\theta;\tilde\rho_\mcC^\theta, \tilde\pr_\mcC^\theta) \] 
together with maps $\tilde\rho_\mcC^\theta\colon G\times\widetilde\Gamma_\mcC^\theta\to\widetilde\Gamma_\mcC^\theta$ and $\tilde\pr_\mcC^\theta\colon \widetilde{\Gamma}_\mcC^\theta\to\widetilde\Gamma_\mcC$, 
where $\tilde{\rho}_\mcC^\theta$ is induced by the action $G\curvearrowright\widetilde X''$, $\widetilde\Gamma_\mcC$ is the plumbing graph together with the marking $\widetilde{\Str}_\mcC$ of the strict transformations of components of $\mcC$, which is constructed from $\Gamma_\mcC$ by blowing-ups (R$1^+$) corresponding to $\tilde\sigma''$, and $\tilde\pr_\mcC^\theta$ is the projection induced by $\tilde\phi$. 
By the same argument, we can check that Theorem~\ref{thm:invariance_G-comb} holds for the above $G$-combinatorics. 

If $G$ is a subgroup of the symmetric group of degree $3$, the above assumption is always true by \cite[\S 2]{horikawa_1973} for $\# G=2$ and \cite[Theorem~4.1]{tan2002} for $\# G=3,6$.

\end{rem}

\section{Splitting invariants and $G$-combinatorics}\label{sec:splitting_G-comb}

In this section, we show relation between $G$-combinatorics and splitting invariants; splitting type \cite{bannai2016}, splitting number \cite{shirane2017}, connected number \cite{shirane2018}, splitting graph \cite{shirane2019}. 
For this aim, we define a \textit{$G$-subcombinatorics} of a plane curve and its subcurve which has no common component with the branch locus. 

Let $\mcC\subset\PP^2$ be a plane curve, and let $G$ be a finite group. 
Assume that there is a surjection $\theta\colon\pi_1(\PP^2\setminus\mcC)\to G$. 
Let $\sigma\colon Y\to\PP^2$ be the minimal good embedded resolution of $\mcC$. 
Let $\phi\colon X\to \PP^2$ and $\tilde\phi\colon \widetilde X\to Y$ be $G$-covers induced by $\theta$, and 
let $\Delta_{\phi}\subset \PP^2$ and $\Delta_{\tilde\phi}\subset Y$ be the branch locus of $\phi$ and $\tilde\phi$, respectively. 

For a subcurve $\mcC_0\subset\mcC$ which has no common component with $\Delta_{\phi}$, 
let $\Gamma_\mcC(\mcC_0)$ be the subgraph of $\Gamma_\mcC$ such that 
\begin{enumerate}
	\item $\mcV(\Gamma_\mcC(\mcC_0)):=\left\{ v\in\mcV(\Gamma_\mcC) \mid \sigma({\red D_v})\subset\mcC_0, \ \sigma({\red D_v})\not\subset\Delta_{\phi} \right\}$, the set of vertices of $\Gamma_{\mcC}$ corresponding to components of $\mcC_0$ and exceptional divisors over the singularities of $\mcC_0$ outside of $\Delta_{\phi}$; 
	\item $\mcY(\Gamma_\mcC(\mcC_0)):=\left\{ y\in\mcY(\Gamma_\mcC) \mid o(y), t(y)\in\mcV(\Gamma_\mcC(\mcC_0)) \right\}$, in other words, $\Gamma_\mcC(\mcC_0)$ is a full subgraph of $\Gamma_\mcC$. 
\end{enumerate}
Put $\Str_\mcC(\mcC_0):=\{v\in\Str_\mcC\mid {\red D_v}\subset\mcC_0\}$. 
Let $\Gamma_\mcC^\theta(\mcC_0)$ be the pull-back of $\Gamma_\mcC(\mcC_0)$ by~$\pr_\mcC^\theta$, 
\[\Gamma_\mcC^\theta(\mcC_0) := (\pr_\mcC^\theta)^{-1}(\Gamma_\mcC(\mcC_0)). \]
Put $\Comb_G(\mcC,\theta)$ as in (\ref{eq:G-combinatorics}). 
The \textit{$G$-subcombinatorics} $\Comb_G(\mcC,\theta;\mcC_0)$ of $\mcC_0\subset\mcC$ is the $6$-tuple 
\[ \Comb_G(\mcC,\theta;\mcC_0):=\left(\Gamma_\mcC^\theta(\mcC_0), \bm{g}_\mcC^\theta,\bm{e}_\mcC^\theta,\bm{m}_\mcC^\theta; \rho_\mcC^\theta, \pr_\mcC^\theta\right), \]
where, by abuse of notation, the restrictions of $\bm{g}_\mcC^\theta, \bm{e}_\mcC^\theta, \bm{m}_\mcC^\theta, \rho_\mcC^\theta$ and $\pr_\mcC^\theta$ to $\Gamma_\mcC^\theta(\mcC_0)$ are denoted by the same symbols. 
A \textit{splitting curve} ${\red D}\subset\PP^2$ for a $G$-cover $\phi\colon X\to\PP^2$ is an irreducible curve ${\red D}\subset\PP^2$ such that ${\red D}\not\subset\Delta_{\phi}$ and {\red ${\phi}^{-1}(D)$ is not irreducible}. 

The known invariants can be computed by the $G$-subcombinatorics as follows: 
\begin{enumerate}
	\item Suppose that $G=\ZZ/2\ZZ$ and a subcurve $\mcC_0\subset\mcC$ consists of two splitting curves $\red D_1$ and $\red D_2$ with $\red D_1\cap D_2\cap \Delta_{\phi}=\emptyset$, say $\red {\phi}^\ast D_i=D_i^++D_i^-$. 
	The \textit{splitting type} of $\red (D_1,D_2;\Delta_{\phi})$ is the pair of intersection numbers $\red (m_1,m_2)=(D_1^+\cdot D_2^+, D_1^+\cdot D_2^-)$. 
	Then the splitting type can be computed {\red from $\Comb_G(\mcC,\theta; \mcC_0)$ by contracting vertices of $\mcV(\Gamma_\mcC^\theta(\mcC_0))\setminus{\pr_\mcC^\theta}^{-1}(\Str_\mcC(\mcC_0))$}. 
	\item Suppose that $\mcC_0\subset\mcC$ is an irreducible component such that $\mcC_0\not\subset\Delta_{\phi}$. 
	The \textit{splitting number} $s_{\phi}(\mcC_0)$ is the number of irreducible components of ${\phi}^\ast(\mcC_0)$. 
	Then $s_{\phi}(\mcC)$ is equal to the number of vertices $w\in\mcV(\Gamma_\mcC(\mcC_0))$ with $\pr_\mcC^\theta(w)\in\Str_\mcC(\mcC_0)$. 
	\item Suppose that $\mcC_0=\mcC-\Delta_{\phi}$ as reduced divisors on $\PP^2$. 
	The \textit{connected number} $c_{\phi}(\mcC_0)$ is the number of connected components of ${\phi}^{-1}(\mcC_0\setminus\Delta_{\phi})$. 
	Then $c_{\phi}(\mcC_0)$ is equal to the number of connected components of $\Gamma_{\mcC}^\theta(\mcC_0)$. 
	\item With the same assumption of (iii), the \textit{splitting graph} of $\mcC_0$ is the bipartite graph $\mcS_{\phi,\mcC_0}$ together with action of $G$ whose vertices corresponding to irreducible components of ${\phi}^{-1}(\mcC_0)$ and singular points of ${\phi}^{-1}(\mcC_0\setminus\Delta_{\phi})$, and $\mcS_{\phi,\mcC_0}$ describes configuration of components of ${\phi}^{-1}(\mcC_0\setminus\Delta_{\phi})$ (see \cite{shirane2019} for details). 
	Put $\Str_\mcC^\theta:=(\pr_\mcC^\theta)^{-1}(\Str_\mcC)$. 
	To construct the splitting graph from $\Gamma_\mcC^\theta(\mcC_0)$, let $\Gamma_0$ be the full subgraph of $\Gamma_\mcC^\theta(\mcC_0)$ whose vertex set is $\mcV(\Gamma_\mcC^\theta(\mcC_0))\setminus\Str^\theta_\mcC$. 
	Let $\mcY^\theta_{\mcC_0}$ be the set of edges $z\in\mcY(\Gamma_\mcC^\theta(\mcC_0))$ such that $o(z),t(z)\in\Str_\mcC^\theta$. 
	Note that the disjoint union $\Conn(\Gamma_0)\sqcup \mcY^\theta_{\mcC_0}$ corresponds to the set of singularities of ${\phi}^{-1}(\mcC_0)$ outside ${\phi}^{-1}(\Delta_{\phi})$, where $\Conn(\Gamma_0)$ is the set of connected components of $\Gamma_0$.  
	Then $\mcS:=\mcS_{\phi,\mcC_0}$ is obtained from $\Gamma_\mcC^\theta(\mcC_0)$ as follows:
	\begin{enumerate}
	\item The vertex set $\mcV(\mcS)$ consists of the following two parts $\mcV_1(\mcS)$ and $\mcV_2(\mcS)$;
	\begin{align*}
	\mcV_1(\mcS)&:=\mcV(\Gamma_\mcC^\theta(\mcC_0))\cap\Str^\theta_\mcC, &
	\mcV_2(\mcS)&:=\{w_k\mid k\in  \Conn(\Gamma_0)\sqcup\mcY_{\mcC_0}^\theta\};
	\end{align*}
	\item for $v\in\mcV_1(\mcS)$ and $w_k\in\mcV_2(\mcS)$ with $k\in\Conn(\Gamma_0)\sqcup\mcY_{\mcC_0}^\theta$, there is an edge $y_{v,w_k}$ with $o(y_{v,w_k})=v$ and $t(y_{v,w_k})=w_k$ if and only if there is an edge $y\in\mcY(\Gamma_\mcC^\theta(\mcC_0))$ satisfying one of the following conditions 
	\begin{enumerate}
	\item $k\in\mcY_{\mcC_0}^\theta$, $t(y)=v$ and $y=k$, 
	\item $k\in\Conn(\mcC_0)$, $t(y)=v$ and $o(y)\in k$;
	\end{enumerate}
	\item the action of $G$ on $\mcS$ is induced from the action $\rho_\mcC^\theta$ of $G$ on $\Gamma_\mcC^\theta(\mcC_0)$. 
\end{enumerate}

\end{enumerate}

In particular, if two plane curves $\mcC_i\subset\PP^2$ ($i=1,2$) with surjections $\theta_i\colon \pi_1(\PP^2\setminus\mcC_i)\to G$ have different splitting invariants (splitting types, splitting numbers, connected numbers or splitting graphs), then they have different $G$-combinatorics with respect to $\theta_i$. 

\begin{rem}
For each $i=1,2$, let $\mcB_i\subset\PP^2$ be a plane curve of even degree, let $\red D_{i1}, D_{i2}\subset\PP^2$ be distinct two irreducible curves with $\red D_{ij}\not\subset\mcB_i$ for $j=1,2$ and $\red D_{i1}\cap D_{i2}\cap\mcB_i=\emptyset$, and 
let $\phi_i\colon X_i\to\PP^2$ be the double cover branched along $\mcB_i$. 
In \cite{bannai2016}, it was proved under the assumption $C_{i1}\pitchfork C_{i2}$ for $i=1,2$ that, if $(C_{11},C_{12};\mcB_1)$ and $(C_{21},C_{22};\mcB_2)$ have distinct splitting types, then there is no homeomorphism $h\colon \PP^2\to\PP^2$ such that $h(C_{11}+C_{12})=C_{21}+C_{22}$ and $h(\mcB_1)=\mcB_2$, where $C_{i1}\pitchfork C_{i2}$ means that $C_{i1}$ and $C_{i2}$ intersect transversally. 
By \cite[III, Theorem~21]{brieskorn_knorrer_1986}, this statement holds even if we omit the assumption $C_{i1}\pitchfork C_{i2}$. 
\end{rem}

\section{Quasi-triangular curves}\label{sec:qt_curve}

In this section, we consider a generalization of triangular curves studied in \cite{artal_cogolludo_martin_2019}, which is related with Artal arrangements studied in \cite{shirane2019}. 
\begin{defin}
Let $d>1$ be an integer, and let $\red D\subset\PP^2$ be a plane curve of degree $2d$, and let $L_1,L_2,L_3\subset\PP^2$ be three lines. 
\begin{enumerate}
	\item For a partition $\parti:=(e_1,\dots,e_n)$ of $d$, we call a singularity $P\in {\red D}$ a \textit{multi-cusp of type $\parti$} if the local topological type of $\red D$ at $P$ is given by 
	\[ \prod_{i=1}^n\left((x+iy)^{e_i}+y^{e_i+1}\right)=0. \] 
	\item For three partitions $\parti_1,\parti_2,\parti_3$ of $d$, we call ${\red D}+L_1+L_2+L_3$ a \textit{quasi-triangular curve of type $(\parti_1,\parti_2,\parti_3)$} if $\red \Sing(D)$ consists of three multi-cusps $P_i$ of type $\parti_i$ ($i=1,2,3$) and $L_i$ passes through $P_j$ and $P_k$ for $\{i,j,k\}=\{1,2,3\}$. 
\end{enumerate}
\end{defin}

\begin{rem}\label{rem:qt-curve}
Let $\parti_1,\parti_2,\parti_3$ be three partitions of $d>1$, and let $\mcC={\red D}+L_1+L_2+L_3$ be a quasi-triangular curve of type $(\parti_1,\parti_2,\parti_3)$. 
\begin{enumerate}
	\item Since ${\red D}\cdot L_i=2d$, ${\red D}\cap L_i$ consists of two singular points $P_j, P_k$ of $\red D$, and $L_i$ is not tangent to $\red D$ at $P_j$ and $P_k$. 
	\item We obtain an Artal arrangement $\overline{\red D}+L_1'+L_2'+L_3'$ of type $(\parti_1,\parti_2,\parti_3)$ after a quadratic transformation, i.e., blowing-up at $P_1,P_2,P_3$ and blowing-down of the strict transformations of $L_1,L_2,L_3$. Since $\overline{\red D}$ is smooth and birational to $\red D$, $\red D$ is irreducible. 
	In particular, the combinatorics of a quasi-triangular curve is determined by its type $(\parti_1,\parti_2,\parti_3)$. 
\end{enumerate}
\end{rem}

We fix three partitions $\parti_i=(e_{i,1},\dots,e_{i,n_i})$ of $d>1$ for $i=1,2,3$, and put $\Parti:=(\parti_1,\parti_2,\parti_3)$. 
Let $s_i$ be the greatest common divisor $\gcd(e_{i,1},\dots,e_{i,n_i})$ for $i=1,2,3$, and put $s:=\gcd(s_1,s_2,s_3)$. 
Let $\widetilde{\Sigma}_\Parti$ be the realization space of quasi-triangular curves of type $\Parti$.

\begin{lem}\label{lem:defining eq.}
Let $\mcC:={\red D}+\mcL$ be a quasi-triangular curve of type $\Parti=(\parti_1,\parti_2,\parti_3)$, 
where $\mcL:=L_1+L_2+L_3$. 
Put $\mu_{i,j}:=e_{i,j}/s_i$. 
Then, after a certain projective transform of $\PP^2$, $\mcL$ defined by $xyz=0$, and 
$\red D$ is defined by $\overline{F}_\Parti(\beta,\{c_{i,j}\},g_0)=0$ with 
{\footnotesize
\begin{align*}
\overline{F}_{\Parti}(\beta,\{c_{i,j}\},g_0) :=&x^d\prod_{j=1}^{n_1}(z+c_{1,j}y)^{e_{1,j}}+y^d\prod_{j=1}^{n_2}(x+c_{2,j}z)^{e_{2,j}}+z^d\prod_{j=1}^{n_3}(y+c_{3,j}x)^{e_{3,j}} \\ & \quad-y^dz^d-z^dx^d-x^dy^d+x^2y^2z^2g_0(yz,zx,xy), \notag
\end{align*}
}
where $\beta$ is an integer with $0\leq\beta<s$, $g_0$ is a homogeneous polynomial of degree $d-3$ in $x,y,z$, and
$c_{i,j}$ are complex numbers satisfying the following conditions; 
\begin{align*}\label{eq: condition of c} 
\prod_{j=1}^{n_i}c_{i,j}^{\mu_{i,j}}=1 \ \ (i=1,2), & &
\prod_{j=1}^{n_3}c_{3,j}^{\mu_{3,j}}=\zeta_d^{\beta\mu_3}, &&
c_{i,j}\ne c_{i,j'} \ \mbox{ if $j\ne j'$}, 
\end{align*}
where $\mu_i:=d/s_i=\sum_{j=1}^{n_i}\mu_{i,j}$ for $i=1,2,3$. 
\end{lem}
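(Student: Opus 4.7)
The plan is to place the three multi-cusps at the coordinate vertices, read off combinatorial constraints on the monomials of the defining polynomial, and normalize using the residual torus symmetry. Since each $L_i$ passes through exactly two of the $P_j$'s and the three lines are distinct, the three points $P_1,P_2,P_3$ are not collinear, so after a projective transformation I may assume $P_1=(1{:}0{:}0)$, $P_2=(0{:}1{:}0)$, $P_3=(0{:}0{:}1)$; this automatically forces $L_1=\{x=0\}$, $L_2=\{y=0\}$, $L_3=\{z=0\}$. By Remark~\ref{rem:qt-curve}(i), $L_i$ is not tangent to $C$ at either of the two multi-cusps on it, so $\mult_{P_j}(C)=\sum_\ell e_{j,\ell}=d$ together with $\deg C=2d$ forces $I_{P_j}(L_i,C)=d$; hence $F|_{x=0}=\lambda_1 y^d z^d$ and cyclically for the other two restrictions.

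Writing $F=\sum a_{i,j,k}\,x^iy^jz^k$ with $i+j+k=2d$, the multiplicity-$d$ condition at $P_1$ in the affine chart $x=1$ kills every monomial with $i>d$, and by symmetry $j,k\le d$. A case analysis shows the allowed monomials split into three families: corner monomials $x^dy^d,\ y^dz^d,\ z^dx^d$; edge monomials $x^dy^jz^{d-j}$, $x^iy^dz^{d-i}$, $x^iy^{d-i}z^d$ with $1\le i,j\le d-1$; and interior monomials $x^iy^jz^k$ with $2\le i,j,k\le d-1$. Interior monomials biject with $(yz)^a(zx)^b(xy)^c$, $a+b+c=d-3$, via $a=d-1-i$ (cyclically), so they collect uniquely into $x^2y^2z^2\,g_0(yz,zx,xy)$ for a homogeneous $g_0$ of degree $d-3$. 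The edge monomials assemble as $x^d A_0(y,z)+y^d B_0(x,z)+z^d C_0(x,y)$ for degree-$d$ polynomials $A_0,B_0,C_0$, and the double-counting at the corners is corrected by the subtraction $-y^dz^d-z^dx^d-x^dy^d$.

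The factorized form of $A_0$ comes from the multi-cusp structure: the degree-$d$ part of $F(1,y,z)$ evaluates to $A_0(y,z)+y^d[x^d]B_0+z^d[x^d]C_0-y^d-z^d$, and the matched line restrictions force $[x^d]B_0=[x^d]C_0=1$, so the tangent cone at $P_1$ equals $A_0(y,z)$; the definition of multi-cusp of type $\parti_1$ then forces $A_0=\prod_{j=1}^{n_1}(z+c_{1,j}y)^{e_{1,j}}$ with distinct $c_{1,j}$, and analogously for $B_0,C_0$. Computing $F|_{x=0}$ directly from the resulting $\overline{F}_{\Parti}$ expression gives $F|_{x=0}=\bigl(\prod_j c_{2,j}^{e_{2,j}}\bigr)y^dz^d$ (and cyclic), so the normalization $\lambda_1=\lambda_2=\lambda_3=1$ is equivalent to $\prod_j c_{i,j}^{e_{i,j}}=1$ for $i=1,2,3$, i.e.\ each $\prod_j c_{i,j}^{\mu_{i,j}}$ is an $s_i$-th root of unity.

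The final step uses the residual torus symmetry. Projective automorphisms fixing the three vertices form the torus $(x,y,z)\mapsto(\alpha x,\beta y,\gamma z)$, which combined with rescaling $F$ by $\delta$ has four one-dimensional parameters; three are used to achieve $\lambda_i=1$, and the stabilizer is $\alpha^d=\beta^d=\gamma^d$. Fixing $\alpha=1$ modulo the overall rescaling of $(\alpha,\beta,\gamma)$, the parameters $\beta,\gamma$ are arbitrary $d$-th roots of unity, and the action on the three products is multiplication by $(\beta/\gamma)^{\mu_1}$, $\gamma^{\mu_2}$, and $\beta^{-\mu_3}$ respectively. I use two generators to set $\prod_j c_{1,j}^{\mu_{1,j}}=\prod_j c_{2,j}^{\mu_{2,j}}=1$; the stabilizer of this further normalization consists of pairs $(\beta,\gamma)$ with $\gamma^{\mu_2}=1$ and $(\beta/\gamma)^{\mu_1}=1$, and via $\mu_i=d/s_i$ one computes that the induced image of $\beta\mapsto\beta^{-\mu_3}$ in the group of $s_3$-th roots of unity is exactly the subgroup of $(s_3/s)$-th roots of unity, since $\mathrm{lcm}\bigl(s_3/\gcd(s_1,s_3),\,s_3/\gcd(s_2,s_3)\bigr)=s_3/\gcd(s_1,s_2,s_3)=s_3/s$. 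Consequently $\prod_j c_{3,j}^{\mu_{3,j}}$ has exactly $s$ orbit representatives, which may be taken as $\zeta_{s_3}^\beta=\zeta_d^{\beta\mu_3}$ for $0\le\beta<s$, completing the normal form. The main obstacle is this gcd arithmetic: identifying precisely the stabilizer's image as $(s_3/s)$-th roots of unity, which is what produces the discrete invariant $\beta\in[0,s)$.
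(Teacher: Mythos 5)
Your proof is correct, but it takes a genuinely different route from the paper. The paper's own argument is two lines: after normalizing $\mcL=\{xyz=0\}$, it applies the standard quadratic transformation $(x:y:z)\mapsto(yz:zx:xy)$, which turns $\mcC$ into an Artal arrangement $\overline{C}+\mcL$ of the same type, and then quotes the normal form for Artal arrangements from \cite[Lemma~4.4]{shirane2019} (the expression $\overline{F}_\Parti$, with its $g_0(yz,zx,xy)$ term, is visibly the Cremona pullback of that normal form). You instead rederive everything directly: the non-tangency of the $L_i$ at the multi-cusps pins the line restrictions to $\lambda_i y^dz^d$ and the multiplicity-$d$ conditions confine the support to $i,j,k\le d$; the interior monomials collect into $x^2y^2z^2g_0(yz,zx,xy)$; the tangent cones give the factorized edge parts; and the residual torus $\{\beta^d=\gamma^d=1\}$ is used to normalize the corner coefficients and two of the three products, with the discrete invariant $\beta\in[0,s)$ emerging from the coset count $\mu_{s_3}/\mu_{s_3/s}$ --- I checked your identity $\mathrm{lcm}\bigl(s_3/\gcd(s_1,s_3),\,s_3/\gcd(s_2,s_3)\bigr)=s_3/s$ and the resulting orbit count, and they are right. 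What each approach buys: the paper's is short but leans on the external lemma and on the (unstated) fact that a multi-cusp of type $\parti$ at a fundamental point of the Cremona map corresponds to the prescribed tangency data of the smooth transform; yours is self-contained and makes the source of $\beta$ transparent, at the cost of redoing the computation that \cite{shirane2019} already contains. One presentational point to tighten: the decomposition $F=x^dA_0+y^dB_0+z^dC_0-(\text{corners})+(\text{interior})$ is not unique at the corner monomials, so you should \emph{define} $A_0,B_0,C_0$ as the degree-$d$ initial forms of $F$ in the three affine charts (the tangent cones at $P_1,P_2,P_3$) before speaking of $[x^d]B_0$; with that convention the corner coefficients are $[y^d]A_0=[x^d]B_0$ etc., and your normalization of the three corners to $1$ simultaneously forces $\prod_j c_{i,j}^{e_{i,j}}=1$ for all $i$, exactly as you use.
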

\begin{proof}
We may assume that $\mcL$ is defined by $xyz=0$. 
Then the quadratic transformation given by $(x:y:z)\mapsto(yz:zx:xy)$ maps $\red D$ to a smooth curve $\overline{\red D}$ such that $\overline{\red D}+\mcL$ is an Artal arrangement of type $\Parti$. 
Hence the assertion follows from \cite[Lemma~4.4]{shirane2019}.  
\end{proof}

For an integer $\beta$ with $0\leq\beta<s$, let $\widetilde\Sigma_\Parti^\beta\subset\widetilde\Sigma_\Parti$ be the subset consisting of quasi-triangular curves projectively equivalent to a curve defined by an equation of form $xyz\overline{F}_\Parti(\beta,\{c_{i,j}\},g_0)=0$. 
By Lemma~\ref{lem:defining eq.}, we have $\widetilde{\Sigma}_\Parti=\bigcup_{\beta=0}^{s-1}\widetilde{\Sigma}_\Parti^\beta$. 
It follows from \cite[Lemma~4.8]{shirane2019} that $\widetilde{\Sigma}_\Parti^\beta$ is connected. 
For $0\leq \beta<\beta'<s$, if $\beta\ne s-\beta'$, then $\widetilde{\Sigma}_\Parti^\beta\cap\widetilde{\Sigma}_\Parti^{\beta'}=\emptyset$ by \cite[Theorem~4.3 and Lemma~4.5]{shirane2019}. 
Furthermore, we infer from \cite[Theorem~4.3]{shirane2019} and its proof that $\widetilde{\Sigma}_\Parti^\beta\cap\widetilde{\Sigma}_\Parti^{\beta'}=\emptyset$ for $0\leq\beta<\beta'<s$ if $\parti_i\ne\parti_j$ for any $i\ne j$; and $\widetilde{\Sigma}_\Parti^\beta=\widetilde{\Sigma}_\Parti^{s-\beta}=\emptyset$ for $0\leq\beta\leq s/2$ otherwise. 
By using $G$-combinatorics, we can prove the following theorem. 

\begin{thm}\label{thm:quasi-triangular}
Let $0\leq\beta_1<\beta_2<s$, and let $\mcC_i:={\red D_i}+\mcL_i$ be a member of $\widetilde\Sigma_\Parti^{\beta_i}$ for $i=1,2$.  
Then $(\mcC_1,\mcC_2)$ is a Zariski pair if and only if $\beta_1\ne s-\beta_2$. 
\end{thm}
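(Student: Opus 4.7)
First I would dispose of the combinatorial part: by Remark~\ref{rem:qt-curve}(ii), the combinatorics of a quasi-triangular curve is determined by its type $\Parti$, so $\mcC_1$ and $\mcC_2$ automatically share the same combinatorics. Thus the whole content of the statement is the dichotomy ``same embedded topology $\iff$ $\beta_1=s-\beta_2$'', and both directions can be addressed separately.

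For the easier direction (``$\beta_1=s-\beta_2$ implies not a Zariski pair''), my plan is to use complex conjugation. Pick a representative $\mcC_1$ defined by $xyz\cdot\overline{F}_\Parti(\beta_1,\{c_{i,j}\},g_0)=0$ as in Lemma~\ref{lem:defining eq.}. Complex conjugation $\sigma\colon\PP^2\to\PP^2$ sends $\mcC_1$ to the curve defined by $xyz\cdot\overline{F}_\Parti(\beta_1,\{\bar c_{i,j}\},\bar g_0)=0$, and the constraint $\prod_j c_{3,j}^{\mu_{3,j}}=\zeta_d^{\beta_1\mu_3}$ becomes $\prod_j\bar c_{3,j}^{\mu_{3,j}}=\zeta_d^{-\beta_1\mu_3}=\zeta_d^{(s-\beta_1)\mu_3}$. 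Hence $\sigma(\mcC_1)\in\widetilde\Sigma_\Parti^{s-\beta_1}=\widetilde\Sigma_\Parti^{\beta_2}$. Since $\widetilde\Sigma_\Parti^{\beta_2}$ is connected and parametrizes an equisingular family, a standard equisingular deformation argument (Zariski's equisingularity theorem, or in the form used for Artal arrangements in \cite{shirane2019}) shows that $\sigma(\mcC_1)$ and $\mcC_2$ have the same embedded topology. Composing with $\sigma$ yields a homeomorphism $(\PP^2,\mcC_1)\to(\PP^2,\mcC_2)$.

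For the harder direction (``$\beta_1\ne s-\beta_2$ implies a Zariski pair''), my plan is to apply the invariance of the $G$-combinatorics (Theorem~\ref{thm:invariance}) with the cyclic group $G=\ZZ/s\ZZ$. Define $\theta_i\colon\pi_1(\PP^2\setminus\mcC_i)\to G$ by sending the meridian of $C_i$ to $1$ and all three line-meridians to $0$; this is a well-defined surjection because $2d\equiv 0\pmod s$ and $s\mid d$. The induced $G$-cover is the cyclic cover of degree $s$ branched along $C_i$. The $G$-subcombinatorics of the three lines $L_{i,1}+L_{i,2}+L_{i,3}$, together with the splitting data at the three multi-cusps $P_1,P_2,P_3$, then encodes (via Section~\ref{sec:splitting_G-comb}) a splitting graph whose structure depends on $\beta_i$. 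Concretely, the monodromy around each line is trivial, so the line splits completely, and the way the three triples of components meet over the cusps is governed by the local equations $\prod c_{i,j}^{\mu_{i,j}}$, producing an invariant in $\ZZ/s\ZZ$ up to the action of $\Aut(G)=(\ZZ/s\ZZ)^\times$. If there were a homeomorphism $h\colon\PP^2\to\PP^2$ carrying $\mcC_1$ to $\mcC_2$, then Theorem~\ref{thm:invariance} would give a $G$-equivalence between $\Comb_G(\mcC_1,\theta_1)$ and $\Comb_G(\mcC_2,\theta_2\circ\tau)$ for some $\tau\in\Aut(G)$; pushing through the classification of line-splitting graphs this forces $\beta_2\equiv\pm\beta_1\pmod s$, contradicting $\beta_1<\beta_2<s$ and $\beta_1\ne s-\beta_2$.

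The main obstacle will be the explicit computation of the splitting graph (equivalently the combinatorial data $\{a(y),b(y),c(y)\}$ on the edges of $\Gamma^\theta_{\mcC_i}$ over the three multi-cusps) in a way that cleanly isolates $\beta_i\in\ZZ/s\ZZ$ modulo $\pm 1$. My approach here would be to perform the quadratic transformation centered at $P_1,P_2,P_3$ to convert the situation into the Artal arrangement setting of \cite{shirane2019}, transfer the already-computed distinguishing invariant back (since the quadratic transformation is an isomorphism away from the cusps and lines, and its effect on the $G$-combinatorics is a well-controlled local move in the plumbing graph), and then quote the final computation showing that $\beta$ and $-\beta$ are the only coincidences. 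Care will be needed in verifying that $\tau\in\Aut(\ZZ/s\ZZ)$ induced by a hypothetical homeomorphism can only act as $\pm 1$ on the relevant invariant, because the meridians of the three lines are forced to map to $0$ by any such $\tau$, so $\tau$ is determined by its action on $\theta(\gtm_{C_i})$.
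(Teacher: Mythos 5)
Your overall architecture for the hard direction (construct a cyclic cover branched along $C_i$, apply Theorem~\ref{thm:invariance}, and transfer the computation to the Artal arrangement obtained by the quadratic transformation at $P_1,P_2,P_3$ so as to quote \cite{shirane2019}) is the same as the paper's, and your treatment of the easy direction via complex conjugation plus connectedness of $\widetilde\Sigma_\Parti^{s-\beta_1}$ is a reasonable self-contained substitute for the paper's citation of \cite[Theorem~4.3\,(iii)]{shirane2019}. Your observation that any automorphism $\tau$ of the covering group induced by a homeomorphism must act as $\pm1$ (because line meridians go to line meridians and the $C$-meridian to $\gtm_{C_2}^{\pm1}$) is also the right final step.

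However, there is a genuine gap in the mechanism you propose for extracting $\beta_i$. You plan to read off the invariant from ``a splitting graph'' of the three lines computed via Section~\ref{sec:splitting_G-comb}. This cannot work: the three lines meet each other and $C_i$ only at the three multi-cusps, which lie on the branch locus of $\phi_i$, so $\phi_i^{-1}(\mcL_i\setminus C_i)$ is a disjoint union of components and the splitting graph of $\mcL_i$ carries no information about $\beta_i$ --- the paper makes exactly this point in the remark following Theorem~\ref{thm:quasi-triangular}. The data that does see $\beta_i$ is the part of the $G$-combinatorics living \emph{over the branch locus}: the vertices over $C_i$ and over the exceptional divisors of the multi-cusps, and the edge maps $\bm{m}_\mcC^\theta$ of the modified plumbing graph (you gesture at this with ``$\{a(y),b(y),c(y)\}$ over the three multi-cusps,'' which contradicts your splitting-graph framing). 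The paper avoids the explicit local computation entirely by showing that a $G$-equivalence $\Comb_G(\mcC_1,\theta_1)\to\Comb_G(\mcC_2,\theta_2)$ would induce a $G$-equivalence $\Comb_G(\mcA_1,\theta_1)\to\Comb_G(\mcA_2,\theta_2)$ of the Artal arrangements (since the resolutions differ only by contracting the strict transforms of the lines), contradicting \cite[Theorem~4.3]{shirane2019}. Finally, note that the paper uses $G=\ZZ/d\ZZ$ (the composite $\pi_1(\PP^2\setminus\mcC_i)\to H_1(\PP^2\setminus C_i)\cong\ZZ/2d\ZZ\to\ZZ/d\ZZ$), which is the group for which the quoted result of \cite{shirane2019} is stated; your choice $G=\ZZ/s\ZZ$ is not obviously wrong but would force you to redo that computation rather than cite it.
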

\begin{proof}
By \cite[Theorem~4.3 (iii)]{shirane2019}, if $\beta_1= s-\beta_2$, then $\mcC_1$ and $\mcC_2$ have the same embedded topology. 
Suppose that $\beta_1\ne s-\beta_2$. 
Let $\sigma_i\colon Y_i\to\PP^2$ be the minimal good embedded resolution of $\mcC_i$. 
Note that, if the strict transform of $\mcL_i$ is contracted, then we obtain the minimal good embedded resolution of the Artal arrangement $\mcA_i$ given by the quadratic transformation of $\mcC_i$.  
Let $\theta_i\colon\pi_1(\PP^2\setminus\mcC_i)\to\ZZ/d\ZZ$ be the surjection given by the composition
\[ \pi_1(\PP^2\setminus\mcC_i)\overset{q_1}{\longrightarrow} \pi_1(\PP^2\setminus {\red D_i})\overset{q_2}{\longrightarrow} H_1(\PP^2\setminus {\red D_i};\ZZ)\cong\ZZ/2d\ZZ\overset{q_3}{\longrightarrow}\ZZ/d\ZZ, \]
where $q_1$ is induced the inclusion $\PP^2\setminus\mcC_i\to\PP^2\setminus {\red D_i}$, $q_2$ is the quotient map by the commutator subgroup, and $q_3$ is the natural surjection. 
Let $\phi_i\colon X_i\to\PP^2$ be the cyclic cover given by $\theta_i$, which is branched along $\red D_i$. 
If there is an equivalence map $\varphi_G^\theta\colon\Comb_G(\mcC_1,\theta_1)\to\Comb(\mcC_2,\theta_2)$, then $\varphi_G^\theta$ induces an equivalence map $\bar{\varphi}_G^\theta\colon\Comb_G(\mcA_1,\theta_1)\to\Comb_G(\mcA_2,\theta_2)$, which is a contradiction to \cite[Theorem~4.3]{shirane2019}. 
Therefore, $\Comb_G(\mcC_i,\theta_i)$ are not $G$-equivalent, and there is no homeomorphism $h\colon\PP^2\to\PP^2$ such that $\red h(D_1)=D_2$ and $h(\mcL_1)=\mcL_2$ by Theorem~\ref{thm:invariance_G-comb} since $\theta_i$ satisfies $\theta_i({\red \gtm_{D_i}})=\pm1$ for a meridian $\red \gtm_{D_i}$ of $\red D_i$. 
This implies that $(\mcC_1,\mcC_2)$ is a Zariski pair since $\deg {\red D_i}=2d>2$ and $\red D_i$ is irreducible. 
\end{proof}

\begin{rem}
For $\mcC_i$ and $\phi_i$ in the proof above, the splitting graphs of $\mcC_i$ for $\phi_i$ are $G$-equivalent since $\phi_i^{-1}(\mcL_i\setminus {\red D_i})$ are disjoint union of $3d$ components. 
Namely, it is impossible to prove the difference of their embedded topology by the splitting graphs of $\mcL_i$ for $\phi_i$. 
\end{rem}

\begin{cor}
There is a Zariski $(\lfloor s/2\rfloor+1)$-tuple $(\mcC_0,\dots,\mcC_{\lfloor s/2\rfloor})$ of quasi-triangular curves $\mcC_i$ of type $\Parti$. 
\end{cor}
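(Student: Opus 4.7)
The plan is to apply Theorem~\ref{thm:quasi-triangular} pairwise to a well-chosen set of $\lfloor s/2\rfloor + 1$ indices $\beta_0 < \beta_1 < \cdots < \beta_{\lfloor s/2\rfloor}$ in $\{0, 1, \ldots, s-1\}$ and then pick a representative $\mcC_i \in \widetilde\Sigma_\Parti^{\beta_i}$ for each index. The only combinatorial requirement on the indices, dictated by Theorem~\ref{thm:quasi-triangular}, is that $\beta_i + \beta_j \ne s$ for every $i \ne j$, since the two members of $\widetilde\Sigma_\Parti^{\beta_i}$ and $\widetilde\Sigma_\Parti^{\beta_j}$ form a Zariski pair precisely in this case. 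Since all resulting curves are quasi-triangular of the same type $\Parti$, by Remark~\ref{rem:qt-curve}~(ii) they automatically share the combinatorics, so distinct embedded topology across pairs will upgrade the family to a genuine Zariski tuple.

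The concrete choice I propose is the simplest possible one: $\beta_i := i$ for $i = 0, 1, \ldots, \lfloor s/2\rfloor$. The nonemptiness of each stratum $\widetilde\Sigma_\Parti^{\beta_i}$ can be read off from Lemma~\ref{lem:defining eq.}, because for any value of $\beta$ in the allowed range one can exhibit concrete coefficients $c_{i,j}$ (e.g.\ generic roots of the prescribed product equations $\prod c_{i,j}^{\mu_{i,j}} = \zeta_d^{\beta\mu_i}$ or $1$) together with any homogeneous $g_0$ of degree $d - 3$, producing an honest quasi-triangular curve through the quadratic transformation correspondence with Artal arrangements used in \cite{shirane2019}. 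Verifying $\beta_i + \beta_j \ne s$ is then an elementary check: if $s$ is odd, $\beta_i + \beta_j \le 2\lfloor s/2\rfloor = s-1 < s$; if $s$ is even, the equality $\beta_i + \beta_j = s$ would force $\beta_i = \beta_j = s/2$, contradicting $i \ne j$. Applying Theorem~\ref{thm:quasi-triangular} to every such pair then completes the construction.

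There is no real obstacle in this argument; the whole content has been absorbed into Theorem~\ref{thm:quasi-triangular}, and the remaining step is a straightforward counting of representatives under the involution $\beta \mapsto s-\beta$ on $\{0, 1, \ldots, s-1\}$, which has exactly $\lfloor s/2\rfloor + 1$ elements in its "lower half". The only minor point worth remarking is the explicit realizability of each stratum $\widetilde\Sigma_\Parti^{\beta}$, which is ensured by the parametric description in Lemma~\ref{lem:defining eq.} and the freedom in choosing the coefficients $c_{i,j}$ subject only to a single product constraint per line.
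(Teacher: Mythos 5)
Your proposal is correct and follows essentially the same route as the paper: the paper likewise picks one representative from each nonempty stratum $\widetilde\Sigma_\Parti^{\beta}$ for $\beta=0,\dots,\lfloor s/2\rfloor$ (citing \cite[Lemma~4.5]{shirane2019} for nonemptiness) and invokes Theorem~\ref{thm:quasi-triangular} pairwise. Your explicit verification that $\beta_i+\beta_j\ne s$ for $i\ne j$ is a detail the paper leaves implicit, but it is exactly the check needed.
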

\begin{proof}
Since $\widetilde\Sigma_{\Parti}^\beta\ne\emptyset$ for each $0\leq \beta<\lfloor s/2\rfloor$ by \cite[Lemma~4.5]{shirane2019}, the assertion follows from Theorem~\ref{thm:quasi-triangular}. 
\end{proof}

\appendix
\section{$S^1$-bundles over $2$-manifolds}\label{sec:S1-bdls}
In this appendix, we recall results on $S^1$-bundles over $2$-manifolds proved in \cite{waldhausen_1967}. 
{\red According to Waldhausen \cite{waldhausen_1967}, a \textit{system} of manifolds means a pairwise disjoint finite union of manifolds.}

\begin{lem}[{\cite[Theorem~2.8]{waldhausen_1967}}]\label{lem:S1-bdl_vertical}
Let $p\colon M\to B$ be an $S^1$-bundle over a $2$-manifold $B$ such that $B$ is neither the $2$-sphere $S^2$ nor the real projective plane $\PP^2_\RR$. 
Let $F$ be a {\red system} of incompressible orientable $2$-manifolds in $M$ such that no component of $F$ with boundary is boundary-parallel. 
Then there is an isotopy $\gth_u\colon M\to M$ $(0\leq u\leq 1)$ such that $\gth_0=\id_M$ and $\gth_1$ satisfies either
\begin{enumerate}
	\item $\gth_1(F)\subset M$ is vertical for $p$, i.e., $\gth_1(F)=p^{-1}(\ell)$ for some $\ell\subset B$. In particular, $F$ is an annulus or a torus; or
	\item the restriction of $p$ to $\gth_1(F)$ is a {\red finite covering}. 
\end{enumerate}
\end{lem}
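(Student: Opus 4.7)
The strategy is to put $F$ into general position with respect to the fibration and then use incompressibility locally to force the global dichotomy. First, approximate $F$ smoothly and isotope it so that $p|_F \colon F \to B$ is a smooth map in general position, with only fold and cusp singularities, so that its critical set projects to a $1$-complex in $B$ whose complement in $B$ consists of open regions over which $F$ is transverse to fibers.

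Next, cover $B$ by small closed disks $D \subset B$ (or half-disks meeting $\partial B$) over which $p$ is trivialized as $p^{-1}(D) \cong D \times S^1$. Each such $p^{-1}(D)$ is a solid torus (or $D^2 \times I$ near $\partial M$), and $F \cap p^{-1}(D)$ is a compact properly embedded surface which is incompressible in the solid torus (an essential compression in $p^{-1}(D)$ would give one in $M$ by irreducibility). The classification of incompressible, non-boundary-parallel surfaces in a solid torus says that each component of $F \cap p^{-1}(D)$ is, after a fiber-preserving isotopy, either a meridian disk transverse to fibers (\emph{horizontal} type) or a vertical annulus $\alpha \times S^1$ with $\alpha$ an arc in $D$ (\emph{vertical} type). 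The non-boundary-parallelism hypothesis on $F$ excludes trivial pieces.

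The global step is to patch these local models across the boundary tori $\partial p^{-1}(D) = S^1 \times S^1$. On such a boundary torus, the slope of a horizontal piece is a longitude while the slope of a vertical piece is a fiber; these slopes are distinct and transverse, so one cannot glue a horizontal component in one $p^{-1}(D)$ to a vertical component in the neighboring one along a common boundary circle without contradicting the embedding of $F$. It follows that throughout $B$ the local pieces are of a single type. If all pieces are vertical, then $F = p^{-1}(\ell)$ for some compact $1$-submanifold $\ell \subset B$, and $F$ is therefore a disjoint union of annuli and tori. If all pieces are horizontal, then $p|_F$ is a local diffeomorphism onto $B$, hence, by compactness, a finite covering. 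Choosing these local isotopies to agree on overlaps and extending by a partition of unity yields the desired global isotopy $\gth_u$.

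The main obstacle is the patching step, which requires careful control of the fold locus and the way isotopies are chosen compatibly on overlapping trivializations; in particular one must argue that any fold where $F$ transitions from vertical to transverse behavior can be removed by an isotopy, which uses essentially that such a transition would produce either a compressing disk or a boundary-parallel component. The excluded bases $B = S^2$ and $B = \PP^2_\RR$ enter precisely here: over these bases there is room to produce genuine horizontal components that are nonetheless compressible (e.g.\ fiber-transverse spheres in $S^2 \times S^1$), so the dichotomy as stated can fail without further qualification.
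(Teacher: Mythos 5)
The paper does not prove this lemma; it is quoted from Waldhausen (\cite[Theorem~2.8]{waldhausen_1967}), and Remark~\ref{rem:S1-bdl_vertical} sketches Waldhausen's argument: choose a system $C$ of \emph{vertical} annuli cutting $M$ down to a single solid torus, isotope $F$ so that the number of circles of $F\cap C$ is minimal, use incompressibility and non-boundary-parallelism of $F$ to rule out arcs and inessential circles in $F\cap C$, and only then classify the resulting pieces of $F$ in the solid torus. Your proposal is in the same spirit (reduce to solid tori over disks), but it has a genuine gap at exactly the point this minimization step is designed to handle. You assert that $F\cap p^{-1}(D)$ is incompressible in the solid torus $p^{-1}(D)$ because ``an essential compression in $p^{-1}(D)$ would give one in $M$.'' This is false as stated: a compressing disk for the piece $F\cap p^{-1}(D)$ has boundary essential \emph{in the piece}, but that curve may bound a disk in $F$ which simply leaves $p^{-1}(D)$, so no compression of $F$ in $M$ results. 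For an arbitrary covering of $B$ by small disks the pieces of $F$ will in general be compressible and boundary-parallel in the local solid tori (e.g.\ a small cap of $F$ dipping into $p^{-1}(D)$), and the classification of incompressible surfaces in a solid torus cannot be invoked. One must first perform an innermost-disk/outermost-arc isotopy of $F$ minimizing its intersection with the cutting surfaces, using incompressibility of $F$ and irreducibility of $M$; this is the content of the step you skipped.

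Two further points. The patching of local types into the global dichotomy, which you yourself identify as ``the main obstacle,'' is not actually carried out: the slope argument on $\partial p^{-1}(D)$ shows adjacent pieces of a single component of $F$ have compatible type, but the passage from this to a global vertical-or-horizontal conclusion (and the removal of fold singularities of $p|_F$, which is essentially equivalent to the theorem itself) is left as an assertion. Finally, your explanation of why $S^2$ and $\PP^2_\RR$ are excluded is not right: the issue is not that horizontal spheres are compressible (a fiber-transverse $S^2\times\{pt\}$ in $S^2\times S^1$ does not bound a ball), but that $S^1$-bundles over these bases ($S^3$, lens spaces, $S^2\times S^1$, \dots) do not admit the vertical cutting system used in the proof and support incompressible surfaces violating the vertical/horizontal dichotomy.
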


\begin{rem}\label{rem:S1-bdl_vertical}
With the same assumption of Lemma~\ref{lem:S1-bdl_vertical}, if $F\cap \partial M$ is vertical, then there is an isotopy $\gth_u\colon M\to M$ ($0\leq u\leq 1$) such that $\gth_0=\id_M$, $\gth_1(F)\subset M$ is vertical for $p$, and $\gth_u|_{\partial M}=\id_{\partial M}$ for $u\in I$, i.e., the isotopy $\gth_u$ is constant on the boundary $\partial M$. 

By tracing the proof of \cite[Theorem~2.8]{waldhausen_1967}, we can check the above statement. 
Indeed, the sketch of the proof of \cite[Theorem~2.8]{waldhausen_1967} with the above assumption is as follows (see \cite{waldhausen_1967} for details): 
Take a {\red system} $C$ of vertical annuli such that $M\setminus U(C)^\circ$ is an $S^1$-bundle over a disc for a small vertical regular neighborhood $U(C)$ of $C\subset M$. 
We deform $M$ by an isotopy of $M$ constant on $\partial M$ so that the number of circles in $F\cap C$ is minimal.  
There is no $1$-manifold with boundary in $F\cap C$ since $F\cap \partial M$ is vertical. 
Furthermore, it is proved in \cite{waldhausen_1967} that there is no $1$-manifold in $F\cap C$ which bounds a disc on $F$ or $C$. 
Hence $F\cap C$ consists of only closed $1$-manifolds which are parallel to the boundary of $C$. 
There is a deformation of $M$ constant on $\partial M$ after which $F\cap C$ consists of vertical circles. 
Let $M'$ be the $3$-manifold obtained from $M$ by cutting out along $C$, i.e., $M'=M\setminus U(C)^\circ$. 
Put $F':=F\cap M'$. 
Then $M'$ is a solid torus, and $F'$ consists of boundary-parallel annuli by \cite[Theorem~1.9 and Lemma~2.3]{waldhausen_1967}. 
The assertion follows from \cite[Lemma~2.4]{waldhausen_1967}.  
\end{rem}

\begin{lem}[{\cite[Lemma~5.1]{waldhausen_1967}}]\label{lem:S1-bdl_annulus}
Put $A:=I\times S^1$, the annulus. 
Let $p_i\colon A\to I$ $(i=1,2)$ be two $S^1$-bundle projections such that $p_1|_{\partial A}=p_2|_{\partial A}$. 
Then there is an isotopy $\gth_u\colon A\to A$ $(0\leq u\leq 1)$ such that $\gth_0=\id_A$, $p_1=p_2\circ\gth_1$ and $\gth_u|_{\partial A}=\id_{\partial A}$ for any $u\in I$. 
\end{lem}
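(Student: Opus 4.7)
The plan is to construct $\gth_1$ as a composition of two trivializations---one for each bundle $p_1, p_2$---made to coincide on $\partial A$, and then to isotope the result to the identity using the mapping class group of the annulus.

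First, I would exploit the contractibility of $I$ to choose trivializations $\psi_i\colon A\to I\times S^1$ with $\pi\circ\psi_i=p_i$, where $\pi$ denotes projection onto the first factor. The boundary condition $p_1|_{\partial A}=p_2|_{\partial A}$ implies that each $\psi_i$ preserves each boundary circle $\{j\}\times S^1$ as a set. Exploiting the gauge freedom in the choice of trivializations---that is, modifying $\psi_i$ by a fiber-preserving self-homeomorphism of $I\times S^1$ lying in $\mathrm{Maps}(I,\mathrm{Homeo}(S^1))$---I would arrange $\psi_1|_{\partial A}=\psi_2|_{\partial A}$. The orientability of $A$ ensures that the relevant $\pi_0$-obstructions on each boundary circle vanish, so this matching can be achieved.

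Having matched the boundary trivializations, I would then define $\gth_1:=\psi_2^{-1}\circ\psi_1$. A direct computation gives
\[
	p_2\circ\gth_1 \;=\; p_2\circ\psi_2^{-1}\circ\psi_1 \;=\; \pi\circ\psi_1 \;=\; p_1,
\]
and $\gth_1|_{\partial A}=\mathrm{id}$ by construction. Finally, I would isotope $\gth_1$ to the identity through homeomorphisms constant on $\partial A$. A priori $\gth_1$ represents a class in $\pi_0(\mathrm{Homeo}(A,\partial A))\cong\ZZ$ generated by a Dehn twist $T$ along a core circle of $A$. If this class is nonzero, I would absorb it by further composing $\psi_1$ with a $p_1$-fiber-preserving self-homeomorphism of $A$ restricting to $\mathrm{id}$ on $\partial A$; such gauge transformations realize arbitrary powers of $T$ acting on the class of $\gth_1$, because they correspond to based loops in $\mathrm{Homeo}(S^1)$ and $\pi_1(\mathrm{Homeo}(S^1))\cong\ZZ$. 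After this correction, $\gth_1$ lies in the identity component $\mathrm{Homeo}_0(A,\partial A)$, yielding the required isotopy.

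The main technical obstacle is to carry out the two adjustments---the boundary matching of $\psi_i$ and the correction of the Dehn twist class---simultaneously, since they use overlapping degrees of freedom in the gauge group. This will require careful tracking of how fiber-preserving self-homeomorphisms of $I\times S^1$ act on both the boundary values of $\psi_i$ and on the mapping class of the resulting $\gth_1$, and invokes the identification between the gauge group of an $S^1$-bundle over $I$ with fixed boundary trivializations and the based loop space $\Omega_{\mathrm{id}}\mathrm{Homeo}(S^1)$, whose component group is $\ZZ$.
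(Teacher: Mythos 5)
The paper does not actually prove this lemma: it is imported verbatim as \cite[Lemma~5.1]{waldhausen_1967}, so there is no in-paper argument to compare against. Your proof is a legitimate self-contained alternative, and its skeleton is sound: trivialize both bundles over the contractible base $I$, match the trivializations on $\partial A$ by a gauge transformation, set $\gth_1=\psi_2^{-1}\circ\psi_1$ so that $p_2\circ\gth_1=p_1$ holds on the nose, and then kill the residual class in $\pi_0(\mathrm{Homeo}(A,\partial A))\cong\ZZ$ by a further gauge transformation coming from a loop in $\mathrm{Homeo}(S^1)$. Two points deserve tightening. First, the vanishing of the $\pi_0$-obstruction to matching $\psi_1|_{\partial A}$ with $\psi_2|_{\partial A}$ is not really a consequence of ``orientability of $A$''; the correct reason is that $\psi_2\circ\psi_1^{-1}$ is a self-homeomorphism of $I\times S^1$ preserving each boundary circle, so its restrictions to the two boundary circles have the same degree (both compute the induced automorphism of $H_1(I\times S^1)\cong\ZZ$), hence the two comparison homeomorphisms $g_0,g_1$ of $S^1$ lie in one component of $\mathrm{Homeo}(S^1)$ and a connecting path --- the required gauge transformation --- exists. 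Second, the ``main technical obstacle'' you flag at the end is not actually an obstacle: the two adjustments use disjoint parts of the gauge group. The boundary-matching step only constrains the values of the gauge transformation at the endpoints of $I$, while the Dehn-twist correction is performed by a gauge transformation equal to $\id$ at both endpoints (a based loop in $\mathrm{Homeo}(S^1)$), which therefore leaves the boundary matching untouched; performing them in that order completes the argument with no interaction to track. With those two clarifications your proof is complete, and arguably more explicit than simply deferring to Waldhausen as the paper does.
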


\begin{lem}[{\cite[Lemma~5.2]{waldhausen_1967}}]\label{lem:S1-bdl_torus}
Let $T$ be a torus, and let $p_i\colon T\to S^1$ $(i=1,2)$ be two $S^1$-bundles. 
If a fiber $F_1$ of $p_1$ is homotopic to a fiber $F_2$ of $p_2$, then there is an isotopy $\gth_u\colon T\to T$ $(0\leq u\leq1)$ such that $\gth_0=\id_T$ and $p_1=p_2\circ\gth_1$. 
\end{lem}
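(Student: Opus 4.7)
The plan is a two-step reduction: first arrange by an ambient isotopy of $T$ that $p_1$ and $p_2$ share one common fiber which is mapped to the same point of $S^1$, and then cut $T$ along this fiber to reduce to the annulus case handled by Lemma~\ref{lem:S1-bdl_annulus}.

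First I would align the fibers. Since $F_1$ and $F_2$ are freely homotopic essential simple closed curves on the torus $T$, by the classical Baer theorem (homotopic essential simple closed curves on a surface are ambient isotopic) there exists an isotopy $\phi_u\colon T\to T$ with $\phi_0=\id_T$ and $\phi_1(F_1)=F_2$. Composing with a further self-isotopy of $T$ that acts by rotation in the base $S^1$-direction of $p_2$, I may arrange in addition that $(p_1\circ\phi_1^{-1})(F_2)=p_2(F_2)=:s_0\in S^1$. Replacing $p_1$ by $p_1\circ\phi_1^{-1}$ and absorbing $\phi_u$ into the final isotopy, I may assume $F_1=F_2=:F$ and $p_1(F)=p_2(F)=s_0$.

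Next I would cut $T$ open along $F$ to obtain an annulus $A$ with $\partial A=A_0\sqcup A_1$, each boundary circle being a copy of $F$. Lifting along the universal cover $\RR\to S^1=\RR/\ZZ$ based at $s_0$, the restriction of each $p_i$ to the interior of $A$ extends continuously to a bundle projection $\tilde p_i\colon A\to I$ with $\tilde p_i(A_0)=\{0\}$ and $\tilde p_i(A_1)=\{1\}$; in particular $\tilde p_1|_{\partial A}=\tilde p_2|_{\partial A}$. Lemma~\ref{lem:S1-bdl_annulus} then yields an isotopy $\tilde\gth_u\colon A\to A$ with $\tilde\gth_0=\id_A$, $\tilde\gth_u|_{\partial A}=\id_{\partial A}$ for every $u$, and $\tilde p_1=\tilde p_2\circ\tilde\gth_1$. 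Because $\tilde\gth_u$ fixes $\partial A$ pointwise, it descends to an isotopy $\gth_u\colon T\to T$ with $\gth_0=\id_T$ and $p_1=p_2\circ\gth_1$, concluding the proof.

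The hard part is the orientation bookkeeping hidden in Step 1. The hypothesis furnishes only an unoriented homotopy $F_1\simeq F_2$, while the $S^1$-bundle structures of $p_1$ and $p_2$ canonically orient each fiber and assign canonical positive/negative sides to $F$ in $T$. I need to choose $\phi_u$ so that $\phi_1$ identifies the $p_1$-framing along $F_1$ with the $p_2$-framing along $F_2$; otherwise the two lifts $\tilde p_1,\tilde p_2$ would disagree by a sign on $\partial A$ and Lemma~\ref{lem:S1-bdl_annulus} could not be applied. Once compatible framings are arranged, the remainder of the argument is routine.
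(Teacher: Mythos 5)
First, a point of comparison: the paper does not prove this lemma at all --- it is quoted from Waldhausen with the citation to his Lemma~5.2 --- so there is no in-paper argument to measure you against. Your strategy (ambient-isotope the two fibers onto one another via Baer's theorem, cut $T$ along the common fiber, and reduce to the annulus case of Lemma~\ref{lem:S1-bdl_annulus}) is the standard proof and is essentially Waldhausen's.

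The gap is exactly the one you flagged, but it is worse than ``orientation bookkeeping'': the compatible coorientations you need cannot always be arranged, because the obstruction lives on $H_1$. Take $T=\RR^2/\ZZ^2$ with $p_1(x,y)=x$ and $p_2(x,y)=-x$ (mod $\ZZ$); the fibers of the two projections literally coincide, yet any $\gth_1$ isotopic to $\id_T$ induces the identity on $\pi_1(T)$, so $p_1=p_2\circ\gth_1$ would force $m=-m$ in $\pi_1(S^1)$. Hence the literal equality $p_1=p_2\circ\gth_1$ is unattainable whenever $p_2$ reverses the base direction relative to $p_1$ along the common fiber, and no choice of $\phi_u$ in your Step~1 can repair this, since it must start at the identity. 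The conclusion that is actually provable --- and the only one the paper ever uses, e.g.\ in the proof of Lemma~\ref{lem:Homeo_Sing} --- is that $\gth_1$ carries every fiber of $p_1$ onto a fiber of $p_2$, equivalently $p_1=\psi\circ p_2\circ\gth_1$ for some self-homeomorphism $\psi$ of the base. With that weaker target your argument closes immediately: if the two lifts $\tilde p_1,\tilde p_2$ disagree by the reflection on $\partial A$, replace $\tilde p_2$ by $1-\tilde p_2$ (absorbing the reflection into $\psi$) before invoking Lemma~\ref{lem:S1-bdl_annulus}. So: right decomposition, right key lemma, but the last step should be resolved by weakening the conclusion to the fiber-preserving form rather than by trying to match framings, which is impossible in general.
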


\begin{lem}[{\cite[Lemma~5.3]{waldhausen_1967}}]\label{lem:S1-bdl_solidtorus}
Put $V:={\red\DD^2}\times S^1$, the solid torus. 
Let $p_i\colon V\to {\red \DD^2}$ $(i=1,2)$ be two $S^1$-bundles. 
If $p_1|_{\partial V}=p_2|_{\partial V}$, then there is an isotopy $\gth_u\colon V\to V$ $(0\leq u\leq1)$ such that $\gth_0=\id_V$, $p_1=p_2\circ\gth_1$ and $\gth_u|_{\partial V}=\id_{\partial V}$.  
\end{lem}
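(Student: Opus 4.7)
The plan is to reduce the assertion from a solid torus to a $3$-ball by cutting $V$ along a single meridional disc that serves simultaneously as a section image for both $p_1$ and $p_2$; on the resulting ball, the two projections can then be aligned by a variant of Alexander's trick.

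First I would choose a cross-section circle $\gamma\subset\partial V$, i.e., a simple closed curve meeting every fiber of $p_1|_{\partial V}=p_2|_{\partial V}$ transversally in a single point. Such a curve exists because $\partial V$ is a torus and $p_i|_{\partial V}$ is the (necessarily trivial) $S^1$-bundle over $S^1$. Since $D^2$ is contractible, each $S^1$-bundle $p_i\colon V\to D^2$ is trivial and admits a global smooth section $s_i\colon D^2\to V$ restricting on $\partial D^2$ to the unique section of $p_i|_{\partial V}$ whose image is $\gamma$. The images $D_i:=s_i(D^2)$ are properly embedded meridional discs with common boundary $\gamma$. Using irreducibility of the solid torus together with a standard innermost-circle argument on $D_1\cap D_2$, one produces an ambient isotopy of $V$ constant on $\partial V$ carrying $D_2$ onto $D_1$; absorbing it into $p_2$, I may assume $D_1=D_2=:D$. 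A subsequent isotopy of $V$ supported in a collar of $D$, exploiting the contractibility of $\mathrm{Diff}(D^2,\partial D^2)$, aligns the two parametrizations so that $s_1=s_2$ and hence $p_1|_D=p_2|_D$.

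Next I would cut $V$ along $D$ to obtain a $3$-ball $B$. Identifying $B$ with $D^2\times I$ via $p_1$, the $p_1$-fibers become the vertical arcs $\{z\}\times I$, while the two copies of $D$ appear as $D^+=D^2\times\{0\}$ and $D^-=D^2\times\{1\}$. The restriction $p_2|_B$ is another $I$-bundle projection to $D^2$, and each of its fibers is an arc joining $(\zeta,0)\in D^+$ to $(\zeta,1)\in D^-$ with the same $\zeta\in D^2$, because we arranged $p_1=p_2$ on $D^\pm$ and on $\partial V$. A trivialization of $p_2|_B$ extending the boundary coordinate coming from $p_1$ produces a diffeomorphism $\Phi\colon D^2\times I\to B$ with $\Phi|_{\partial(D^2\times I)}=\id$ and $p_2\circ\Phi=\pr_1=p_1$. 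By Alexander's trick (Cerf--Hatcher in the smooth setting), $\Phi$ is isotopic rel $\partial B$ to $\id_B$ through diffeomorphisms $\Phi_u$.

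Finally, because every $\Phi_u$ is the identity on $D^+\cup D^-$, the isotopy descends through the gluing $D^+\equiv D^-$ to a well-defined isotopy $\gth_u\colon V\to V$ with $\gth_u|_{\partial V}=\id_{\partial V}$, $\gth_0=\id_V$, and $p_2\circ\gth_1=p_1$, as required. The principal obstacle is the second step: both the discs $D_1,D_2$ and the section parametrizations $s_1,s_2$ must be made to coincide through ambient isotopies of $V$ that are constant on $\partial V$. Handling this requires a careful use of irreducibility of $V$ (for the discs), of the contractibility of $\mathrm{Diff}(D^2,\partial D^2)$ (for the parametrizations), and of a collar-neighborhood extension argument to propagate each such isotopy from $D$ into $V$ without disturbing $\partial V$.
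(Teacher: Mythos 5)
The paper does not prove this lemma; it is quoted directly from Waldhausen, so there is no in-paper argument to compare against. Your strategy --- find a common meridional section disc, cut $V$ along it to a ball, and finish with Alexander's trick --- is the standard route and is essentially sound, but the very first step fails as written. You choose an arbitrary cross-section circle $\gamma\subset\partial V$ of $p_1|_{\partial V}=p_2|_{\partial V}$ and assert that, since $D^2$ is contractible, each $p_i$ admits a global section restricting to the section of $p_i|_{\partial V}$ with image $\gamma$. Contractibility of the base gives the existence of \emph{some} global section, but not of one with prescribed boundary values: in a trivialization $V\cong D^2\times S^1$ adapted to $p_i$, a boundary section is a map $\partial D^2\to S^1$ and extends over $D^2$ if and only if its degree vanishes, equivalently if and only if $[\gamma]=0$ in $H_1(V)$. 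A general cross-section circle does not satisfy this (composing $\gamma$ with Dehn twists along fibers changes its class by multiples of the fiber class, which generates $H_1(V)\cong\ZZ$), so the discs $D_i=s_i(D^2)$ need not exist for your $\gamma$.

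The gap is easily repaired, and the repair is worth making explicit because it is the only point where the two bundle structures interact globally: take $\gamma:=s_1(\partial D^2)$ for some global section $s_1$ of $p_1$, so that $[\gamma]=0$ in $H_1(V)$. Since the fibers of $p_1$ and $p_2$ over points of $\partial D^2$ coincide and either fiber class generates $H_1(V)$, the extendability criterion for the boundary section of $p_2$ with image $\gamma$ is again $[\gamma]=0$; hence the same $\gamma$ also bounds a section disc of $p_2$. With this correction the remaining steps go through as you describe: the two discs are isotopic rel boundary by irreducibility of $V$, the parametrizations are aligned through a collar using the fact that every homeomorphism of $D^2$ fixing $\partial D^2$ is isotopic to the identity rel $\partial D^2$, and the comparison of the two $I$-bundle structures on the cut-open ball is settled by Alexander's trick (which suffices here, since the paper works with homeomorphisms and isotopies, so no appeal to Cerf--Hatcher is needed).
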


\begin{thm}[{\cite[Theorem~5.5]{waldhausen_1967}}]\label{thm:S1-bdl_gene}
Let $p_i\colon M_i\to B_i$ $(i=1,2)$ be two $S^1$-bundles over $2$-manifolds $B_i$. 
Assume that $B_i$ are not homeomorphic to $S^2$, $\PP^2_\RR$, $\red \DD^2$ or $I\times S^1\times S^1$, and that $p_i$ has no section if $B_i$ is homeomorphic to either the torus or the Klein's bottle. 
If $h\colon M_1\to M_2$ is a homeomorphism, then there is an isotopy $\gth_u\colon M_1\to M_2$ $(0\leq u\leq 1)$ such that 
\begin{enumerate}
	\item $\gth_0=h$, 
	\item there is a homeomorphism $h'\colon B_1\to B_2$ such that $p_2\circ\gth_1=h'\circ p_1$. 
\end{enumerate}
\end{thm}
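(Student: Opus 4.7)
The approach is to reduce, via a vertical decomposition of $M_1$, to the case of $S^1$-bundles over discs, where Lemma~\ref{lem:S1-bdl_solidtorus} (with boundary matching supplied by Lemma~\ref{lem:S1-bdl_annulus}) yields the desired fiber-preserving isotopy directly. The entire burden then lies in isotoping $h$ so that a chosen system of vertical surfaces in $M_1$ is mapped to a vertical system in $M_2$.

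First, I would pick a family $\Gamma \subset B_1$ of disjoint simple closed curves and, if $\partial B_1 \ne \emptyset$, properly embedded arcs, such that every component of the cut-open base $B_1 \setminus U(\Gamma)$ is a disc; such a cut system exists because $B_1 \not\cong S^2, \PP^2_\RR$. Its preimage $V := p_1^{-1}(\Gamma) \subset M_1$ is a disjoint union of incompressible vertical tori (from the closed curves) and vertical annuli (from the arcs), and $M_1 \setminus U(V)$ is a disjoint union of solid-torus $S^1$-bundles over discs. The corresponding decomposition of $M_2$ will be produced by transporting $V$ across $h$ into vertical position for $p_2$.

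Second, after a preliminary boundary isotopy provided by Lemma~\ref{lem:S1-bdl_annulus} (applied to the $S^1$-bundle structure induced by $p_2$ on each component of $\partial M_2$) to verticalize $h(V) \cap \partial M_2$, I would apply Lemma~\ref{lem:S1-bdl_vertical} in its boundary-relative form (Remark~\ref{rem:S1-bdl_vertical}) to each component of $h(V)$. The conclusion, after an isotopy constant on $\partial M_2$, is that each component of $h(V)$ is either vertical for $p_2$ or $p_2$ restricts on it to a finite covering onto its image in $B_2$. The second alternative would force the image base to have Euler characteristic zero and admit a horizontal multisection for $p_2$; combining this with the hypothesis excluding $S^2, \PP^2_\RR, D^2, I\times S^1\times S^1$ for $B_2$ (ruling out annulus/Möbius-band images) and the no-section condition when $B_2$ is a torus or Klein bottle rules out this alternative. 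Hence $h(V)$ is vertical for $p_2$. Cutting $M_1$ along $V$ and $M_2$ along $h(V)$ produces corresponding pairs of solid-torus $S^1$-bundles over discs; on each pair, Lemma~\ref{lem:S1-bdl_solidtorus} delivers a fiber-preserving isotopy constant on the boundary, after boundary trivializations have first been matched via Lemma~\ref{lem:S1-bdl_annulus}. Gluing these local isotopies along the cutting surfaces yields a global isotopy $\gth_u$ with $\gth_0 = h$ and $\gth_1$ fiber-preserving; the assignment $h'(p_1(x)) := p_2(\gth_1(x))$ is well-defined and a homeomorphism $B_1 \to B_2$.

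The main obstacle is the case analysis that excludes the finite-covering alternative in Lemma~\ref{lem:S1-bdl_vertical}, since this is precisely where the exceptional base hypotheses and the no-section condition on torus/Klein-bottle bases are essential---the theorem genuinely fails without them, as exemplified by the non-isotopic $S^1$-bundle structures on $T^3$ over $T^2$. A second, more routine technical point is the simultaneous matching of boundary fibers on the cut tori so that Lemma~\ref{lem:S1-bdl_solidtorus}, which demands exact equality of bundle structures on the boundary, applies; this is arranged by a finite iteration of the boundary isotopy of Lemma~\ref{lem:S1-bdl_annulus} preceding the application of Lemma~\ref{lem:S1-bdl_solidtorus} on each piece.
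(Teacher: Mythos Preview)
The paper does not give its own proof of this theorem (it is cited from Waldhausen), but the sketch it does provide in Remark~\ref{rem:S1-bdl_gene} for the boundary-relative variant follows exactly your outline: cut along a system of vertical annuli so the complement is a solid torus, verticalize the image via Lemma~\ref{lem:S1-bdl_vertical} (in the form of Remark~\ref{rem:S1-bdl_vertical}), match fibers on the cutting surfaces via Lemma~\ref{lem:S1-bdl_annulus}, and finish piecewise with Lemma~\ref{lem:S1-bdl_solidtorus}. Your proposal is thus essentially the same approach, with the expected additional case analysis (ruling out the horizontal alternative of Lemma~\ref{lem:S1-bdl_vertical} using the excluded bases and the no-section hypothesis) needed when the boundary-fiber hypothesis of the remark is not assumed.
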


\begin{rem}\label{rem:S1-bdl_gene}
With the same assumption of Theorem~\ref{thm:S1-bdl_gene}, if $\partial M_1\ne \emptyset$ and a homeomorphism $h\colon M_1\to M_2$ maps each fiber in $\partial M_1$ of $p_1$ to that of $p_2$, then there is an isotopy $\gth_u\colon M_1\to M_2$ satisfying (i), (ii) in Theorem~\ref{thm:S1-bdl_gene} and 
\begin{enumerate}
	\item[(iii)] $\gth_u|_{\partial M_1}=h|_{\partial M_1}$.  
\end{enumerate}
Indeed, it can be {\red seen} by tracing the proof of \cite[Theorem~5.5]{waldhausen_1967}. 
The sketch is as follows (see \cite{waldhausen_1967} for details): 
Let $C_1\subset M_1$ be a {\red system} of vertical annuli for $p_1$ such that $M_1\setminus U(C_1)^\circ$ for a small vertical regular neighborhood $U(C_1)$ of $C_1$ in $M_1$. 
Then no component of $C_1$ is compressible or boundary-parallel. 
By Remark~\ref{rem:S1-bdl_vertical}, there is an isotopy $\gth_u'\colon M_1\to M_2$ ($0\leq u\leq 1$) such that $\gth_0'=h$, $C_2:=\gth_1'(C_1)$ is vertical for $p_2$ and $\gth_u'|_{\partial M_1}=h|_{\partial M_1}$. 
By Lemma~\ref{lem:S1-bdl_annulus}, there is an isotopy $\gth_u'\colon M_1\to M_2$ ($1\leq u\leq 2$) such that $\gth_u'|_{\partial M_1}=h|_{\partial M_1}$, $\gth'_u(C_1)=C_2$ for $1\leq u\leq 2$, and $\gth'_2$ maps each fiber in $C_1$ of $p_1$ to that in $C_2$ of $p_2$. 
By applying Lemma~\ref{lem:S1-bdl_solidtorus} to the restriction of $\gth'_2$ to $M_1\setminus U(C_1)^\circ$, we have an isotopy $\gth_u'\colon M_1\to M_2$ ($2\leq u\leq 3$) such that $\gth_u'|_{\partial M_1}=h_{\partial M_1}$ for $2\leq u\leq 3$, and $\gth'_3$ maps every fiber of $p_1$ to that of $p_2$. 
Let $\gth_u:=\gth_{3u}$ ($0\leq u\leq 1$), and let $h'\colon B_1\to B_2$ be the homeomorphism induced by $\phi_1$. 
The isotopy $\gth_u$ and the homeomorphism $h'\colon B_1\to B_2$ satisfy the assertion.  
\end{rem}

By the same argument of Remark~\ref{rem:S1-bdl_gene}, we obtain the following lemma. 

\begin{lem}\label{lem:S1-bdl_over_annulus}
Put $M:=I\times S^1\times S^1$ and $A:=I\times S^1$. 
Let $p_i\colon M\to A$ $(i=1,2)$ be two $S^1$-bundles. 
If a homeomorphism $h\colon M\to M$ maps each fiber in $\partial M$ of $p_1$ to that of $p_2$, then there is an isotopy $\gth_u\colon M\to M$ $(0\leq u\leq 1)$ such that $\gth_0=h$, $\gth_u|_{\partial M}=h|_{\partial M}$ for $0\leq u\leq 1$, and $\gth_1$ maps every fiber of $p_1$ to that of $p_2$. 
\end{lem}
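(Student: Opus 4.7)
The plan is to follow the proof of Remark~\ref{rem:S1-bdl_gene} essentially verbatim, with the annulus base $A$ replacing the more general base there; we cannot cite Theorem~\ref{thm:S1-bdl_gene} directly because it excludes the annulus case, but the cut-and-paste strategy from the remark still applies.

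First I would choose a properly embedded arc $\alpha_1\subset A$ joining the two boundary circles of $A$, and set $C_1:=p_1^{-1}(\alpha_1)$. This is a vertical annulus in $M$ whose two boundary circles are fibers of $p_1$, one in each component of $\partial M$; cutting $M$ along $C_1$ yields the $S^1$-bundle over the disc $A\setminus U(\alpha_1)^\circ$, which is a solid torus. Two observations are immediate: $C_1$ is incompressible in $M$ (by the homotopy exact sequence of $p_1$, a fiber generates a $\ZZ$-summand of $\pi_1(M)$), and $C_1$ is not boundary-parallel, since its two boundary circles lie on distinct components of $\partial M$. I would then consider $F:=h(C_1)$ and apply Remark~\ref{rem:S1-bdl_vertical} to $F\subset M$ with the bundle $p_2\colon M\to A$. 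All hypotheses hold: $A$ is neither $S^2$ nor $\PP^2_\RR$; $F$ inherits incompressibility and the absence of boundary-parallel components from $C_1$; and $F\cap\partial M=h(\partial C_1)$ is vertical for $p_2$ by the standing assumption on $h$. The second alternative in Lemma~\ref{lem:S1-bdl_vertical}, that $p_2|_F$ be finite, is ruled out because $F$ is an annulus whose boundary circles collapse to points under $p_2$. This produces an isotopy $\psi_u^{(1)}\colon M\to M$, constant on $\partial M$, such that $\psi_1^{(1)}(F)=p_2^{-1}(\alpha_2)=:C_2$ for some properly embedded arc $\alpha_2\subset A$. Replacing $h$ by $\psi_1^{(1)}\circ h$, I may assume $h(C_1)=C_2$ while keeping $h|_{\partial M}$ unchanged.

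Next, identifying $\alpha_1$ and $\alpha_2$ with $I$ by a common choice of endpoint labeling, the two $S^1$-bundle projections $p_1|_{C_1}$ and $p_2\circ h|_{C_1}$ on $C_1$ agree on $\partial C_1$ up to composition with a homeomorphism of $I$. Lemma~\ref{lem:S1-bdl_annulus} then furnishes an isotopy of $C_1$, rel $\partial C_1$, after which $h|_{C_1}$ sends each fiber of $p_1$ to a fiber of $p_2$. Since this isotopy is constant on $\partial C_1\subset\partial M$, I can extend it by the standard bi-collar construction to an ambient isotopy $\psi_u^{(2)}$ of $M$ supported in a two-sided tubular neighborhood of $C_1$ that meets $\partial M$ only near $\partial C_1$; this extension is constant on $\partial M$. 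Finally, I would cut $M$ along $C_1$: the restriction of the new $h$ to the resulting solid torus $M\setminus U(C_1)^\circ$ is a homeomorphism onto $M\setminus U(C_2)^\circ$ that already maps each boundary fiber of $p_1$ to a boundary fiber of $p_2$, so Lemma~\ref{lem:S1-bdl_solidtorus} yields an isotopy, rel boundary, making it fiber-preserving. Extending this isotopy by the identity across a collar of $C_1$ and composing all three isotopies produces the desired $\gth_u$.

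The main obstacle I foresee is bookkeeping around the third step: one must ensure that the isotopy produced by Lemma~\ref{lem:S1-bdl_annulus} on the embedded annulus $C_1$ extends to an ambient isotopy of $M$ that is both supported in a neighborhood of $C_1$ and constant on all of $\partial M$. The two-sidedness of $C_1$ and the fact that the $C_1$-isotopy is constant on $\partial C_1$ make this routine, but it is the only place where a careless collar choice could fail to preserve $h|_{\partial M}$. Everything else, including the exclusion of the finite-map alternative in Remark~\ref{rem:S1-bdl_vertical} and the applicability of Lemma~\ref{lem:S1-bdl_solidtorus} to the cut-open piece, is immediate.
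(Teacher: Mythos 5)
Your proposal is correct and follows essentially the same route as the paper: the lemma is stated there with only the remark that it follows ``by the same argument of Remark~\ref{rem:S1-bdl_gene},'' and your cut-along-a-vertical-annulus argument (verticalize $h(C_1)$ via Remark~\ref{rem:S1-bdl_vertical}, adjust fibers on the annulus via Lemma~\ref{lem:S1-bdl_annulus}, then finish on the complementary solid torus via Lemma~\ref{lem:S1-bdl_solidtorus}) is exactly that argument specialized to the annulus base. You have in fact supplied more detail than the paper does, including the correct verification that the finite-map alternative of Lemma~\ref{lem:S1-bdl_vertical} is excluded.
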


Any $S^1$-bundle $p\colon M\to B$ over a $2$-manifold $B$ determines uniquely a disc bundle $\overline{p}\colon \overline{M}\to {\red B}$ up to isotopy such that $\partial \overline{M}=M$ and $p=\overline{p}|_{M}$ (cf. \cite{HNK1971}).

Let $p_i\colon M_i\to B_i$ ($i=1,2$) be two $S^1$-bundles, and let $\overline{p}_i\colon \overline{M}_i\to B_i$ be the induced disc bundles. 
Let $\partial B_{i,1},\dots,\partial B_{i,d_i}$ be the boundary components of $B_i$, and put $\overline{\partial M}_{i,j}:=\overline{p}_i^{-1}(\partial B_{i,j})$. 
Let 
\[\overline{\tau}_{i, j}\colon S^1\times {\red \DD^2}\longrightarrow \overline{\partial M}_{i, j}\]
be the trivialization of $\overline{\partial M}_{i, j}\subset\overline{M}_i$ with $\overline{p}_i\circ\overline{\tau}_{i,j}(t_1,t_2)=t_1$ for each $i=1,2$. 
We say the following operation to \textit{plumb} $\overline{M}_1$ and $\overline{M}_2$ at $\overline{\partial M}_{1,j_1}$ and $\overline{\partial M}_{2,j_2}$: 

We take the disjoint union $\overline{M}_1\sqcup ({\red \DD^2\times \DD^2})\sqcup\overline{M}_2$, and 
identify by the homeomorphism 
\begin{align*}
	&\overline{\partial M}_{1,j_1}\ni \overline{\tau}_{1,j_1}(t_1,t_2) \longmapsto (t_1,t_2)\in {\red (\partial \DD^2)\times \DD^2}, 
	\\
	&\overline{\partial M}_{2,j_2}\ni \overline{\tau}_{2,j_2}(t_1,t_2)\longmapsto (t_2,t_1)\in {\red \DD^2\times (\partial \DD^2)}
\end{align*}
for each $(t_1,t_2)\in S^1\times {\red\DD^2}$. 
Then we obtain an orientable manifold with boundary, and this manifold is differentiable except along $S^1\times S^1\subset {\red \DD^2\times \DD^2}$, where it has a ``corner". 
By Milnor \cite{milnor1959}, this corner can be smoothed, and the smoothing is essentially unique (cf. \cite{HNK1971}). 
Note that $M_i=\partial \overline{M}_i$ ($i=1,2$) are glued by \[ J:=\begin{pmatrix} 0&1\\1&0\end{pmatrix}\colon\partial (\overline{\partial M}_{1,j_1})\cong S^1\times S^1\longrightarrow S^1\times S^1\cong\partial(\overline{\partial M}_{2,j_2})\] via the trivializations $\overline{\tau}_{i,j_i}$.

\section{Plumbing graphs}\label{sec:pgm} 
In this appendix, we recall results on plumbing graphs in \cite{neumann1981}. 

\subsection{Plumbing graphs and W-graphs}\label{sec:pg vs Wg}

Let $\Gamma_\dplumb=(\Gamma,\bm{g},\bm{e},\sign; \Ver, \AH)$ be a decorated plumbing graph of F-normal form with $\AH(\Gamma)\ne\emptyset$. 
We recall \cite[Theorem~5.1]{neumann1981} which shows how to obtain a reduced graph structure of the plumbed manifold $\PM(\Gamma_\dplumb)$ from the F-normal form $\Gamma_\dplumb$. 
We obtain the following theorem from \cite[Theorem~5.1]{neumann1981}. 

\begin{thm}[{cf. \cite[Theorem~5.1]{neumann1981}}]\label{thm:red_str}
	Let $\Gamma_\dplumb=(\Gamma,\bm{g},\bm{e},\sign;\Ver,\AH)$ be a decorated plumbing graph of F-normal form with $\AH(\Gamma)\ne\emptyset$. 
	Let $\mcS$ be a subset of $\mcY(\Gamma)$ which contains precisely one edge of each maximal chain $\gtc$ of $\Gamma$ such that $\gtc$ is not incident to a boundary vertex. 
	Then $\TT_\mcS:=\bigcup_{y\in \mcS}T_y$ is a reduced graph structure of $\PM(\Gamma_\dplumb)$.  
\end{thm}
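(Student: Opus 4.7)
The plan is to deduce this theorem from \cite[Theorem~5.1]{neumann1981} by extending Neumann's analysis, which treats plumbing graphs without boundary vertices, to decorated plumbing graphs in F-normal form with $\AH(\Gamma)\neq\emptyset$. I would proceed in two steps, showing separately that $\TT_\mcS$ is a graph structure and that it is reduced.

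\textit{Step 1 (graph structure).} Cutting $\PM(\Gamma_\dplumb)$ along $\TT_\mcS$ produces components indexed by the equivalence classes of $\Ver(\Gamma)$ under the relation generated by: two vertices are equivalent when they lie on the same maximal chain and are connected by an edge in $\mcY(\Gamma)\setminus\mcS$. An interior vertex of a chain satisfies $\bm{g}(v)=0$ and the associated $S^1$-bundle $M_v$ lies over an annulus. Plumbing such bundles along boundary tori via the matrix $J$, as in the construction of $\PM(\Gamma_\dplumb)$, yields again an $S^1$-bundle over a $2$-manifold obtained by gluing annuli along arcs, by the analysis in \cite[\S5]{neumann1981}. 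Hence every component of $\PM(\Gamma_\dplumb)\setminus U(\TT_\mcS)^\circ$ is an $S^1$-bundle over a $2$-manifold, so $\TT_\mcS$ is a graph structure.

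\textit{Step 2 (reducedness).} I would verify that none of the ten conditions of \cite[Definition~6.2]{waldhausen_1967_2} is satisfied. The F-calculus of \cite[Proposition~2.1 and Appendix]{neumann1981} is designed so that the operations $(\mathrm{R1}_0^+)$, $(\mathrm{R1}^+)$, $(\mathrm{R1}^-)$, $(\mathrm{R1}^{++})$ and $(\mathrm{R2})$ correspond exactly to the Waldhausen reducibility conditions; since $\Gamma_\dplumb$ is already in F-normal form, no such operation applies, so the corresponding conditions on $\PM(\Gamma_\dplumb)$ with $\TT_\mcS$ do not hold. The remaining work is to handle the conditions involving $S^1\times S^1\times I$ components: selecting exactly one edge per interior chain ensures that each collapsed chain of length $k\geq 1$ produces a Seifert piece whose fibration invariants are read off from the continued fraction of the chain's euler numbers, which in F-normal form (with $\bm{e}\leq -2$ on chain interiors) cannot collapse to a trivial product fibration; and selecting no edge from chains incident to $\AH(\Gamma)$ prevents the creation of boundary-parallel tori in $\TT_\mcS$.

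\textit{Main obstacle.} The most delicate point is the bookkeeping near $\AH(\Gamma)$. In the framework of Remark~\ref{rem:original} a boundary vertex $v\in\AH(\Gamma)$ corresponds to a piece $M_v'\cong S^1\times S^1\times I$ one of whose boundary tori is a boundary component of $\PM(\Gamma_\dplumb)$; a chain incident to such $v$ therefore sits adjacent to $\partial\PM(\Gamma_\dplumb)$, and any torus of $\TT_\mcS$ chosen within this chain would be boundary-parallel, violating reducedness. The main technical work is to check case-by-case that the prescribed exclusion of chains incident to a boundary vertex from $\mcS$ exactly matches the boundary-parallel and $S^1\times S^1\times I$-type forbidden configurations in Waldhausen's list, mirroring Neumann's treatment of singular fibers in \cite[Theorem~5.1]{neumann1981} for the closed case.
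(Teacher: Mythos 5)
Your overall strategy --- reducing to Neumann's Theorem~5.1 and then worrying about what happens near $\AH(\Gamma)$ --- is the same as the paper's, but two concrete pieces are missing. First, the point you flag as the ``main obstacle'' is exactly the step the paper resolves, and you leave it as promised future case-by-case work rather than doing it. The paper's mechanism is short: following Remark~\ref{rem:original}, regard each arrowhead as an ordinary vertex with $\bm{g}=0$, $\bm{e}=0$ and $r_v=1$, so that $\Gamma_\dplumb$ becomes an honest plumbing graph in Neumann's original sense; then every maximal chain incident to a boundary vertex is eliminated by annulus absorptions (the operation (R8) of \cite[Proposition~2.1]{neumann1981}, which is available for original plumbing graphs even though it is dropped from the F-calculus). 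After these absorptions the graph is in the normal form to which \cite[Theorem~5.1]{neumann1981} applies verbatim, and the tori it prescribes are precisely $\TT_\mcS$. Your appeal to ``the F-calculus operations correspond exactly to the Waldhausen reducibility conditions'' is not a substitute for this: the correspondence between normal form and reduced graph structure is the content of Neumann's theorem, not a formal consequence of no move applying, and it says nothing by itself about the chains touching $\AH(\Gamma)$.

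Second, and more seriously, your argument misses an exceptional configuration for which the reduction to \cite[Theorem~5.1]{neumann1981} fails: an F-normal form graph consisting of a single arrowhead, a chain, and a terminal vertex carrying two leaves of weight $-2$ (the $\RR\PP^2$-extrusion shape). Here the two length-one chains at the $-2$ leaves are not incident to the boundary vertex, so $\mcS$ selects both corresponding edges and $\TT_\mcS$ consists of two tori; this is exactly the configuration underlying the exceptional W-graph equivalence (\ref{eq:W-graphs}), and reducedness cannot be read off from the normal-form correspondence. The paper handles it by identifying $\PM(\Gamma_\dplumb)$ with this graph structure as the graph manifold $Q$ of \cite[Section~3]{waldhausen_1967} (built from the $S^1$-bundle over the M\"obius band) and citing that $Q$ is reduced. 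Your continued-fraction remark about chain interiors with $\bm{e}\leq -2$ does not cover this case, since the $-2$ leaves are degree-one vertices producing solid-torus pieces, not interior chain vertices.
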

\begin{proof}
As \cite[Appendix]{neumann1981}, we regard each vertex $v$ in $\AH(\Gamma)$ as a vertex with $\bm{g}(v)=0$, $\bm{e}(v)=0$ and $r_v=1$ and $\Gamma_\dplumb$ as an original plumbing graph. 
Then each chain $\mathfrak{c}$ incident to a boundary vertex can be contract by annulus absorptions (R8) in \cite[Proposition~2.1]{neumann1981}. 
Hence $\TT_\mcS$ is a reduced graph structure of $\PM(\Gamma_\dplumb)$ by the definition of F-normal form and \cite[Theorem~5.1]{neumann1981} except the following case:
\[
\begin{tikzpicture}
	\coordinate (a1) at (0,0);
	\coordinate (a2) at (1.5,0);
	\coordinate (a3) at (3,0);
	\coordinate (a31) at (4,0);
	\coordinate (a32) at (5.5,0);
	\coordinate (a4) at (6.5,0);
	\coordinate (a5) at (8,0);
	\coordinate (a61) at (9.5,0.3);
	\coordinate (a62) at (9.5,-0.3);
	
	\draw[fill] (a2) circle [radius=2pt] node [above] {$e_k$};
	\draw[fill] (a3) circle [radius=2pt] node [above] {$e_{k-1}$};
	\draw[fill] (a4) circle [radius=2pt] node [above] {$e_1$};
	\draw[fill] (a5) circle [radius=2pt] node [above] {$e$};
	\draw[fill] (a61) circle [radius=2pt] node [right] {$-2$};
	\draw[fill] (a62) circle [radius=2pt] node [right] {$-2$};
	
	\draw[-{Stealth[length=7pt]}] (a31) to (a1);
	\draw[dashed] (a31) to (a32);
	\draw (a32) to (a5);
	\draw (a61) to (a5) to (a62);
\end{tikzpicture}
\]
If $\Gamma_\dplumb$ is the above decorated plumbing graph, then $\TT_\mcS$ consists of two tori, and the graph manifold $\PM(\Gamma_{\dplumb})$ with graph structure $\TT_\mcS$ is the graph manifold $Q$ in \cite[Section~3]{waldhausen_1967}, which is reduced. 
\end{proof}

\subsection{Orientation reversal}
Here we recall the results in \cite[Section~7]{neumann1981}. 
For a graph manifold $M$ with orientation, let $-M$ denote 
the graph manifold $M$ with reversed orientation. 
For a plumbing graph $\Gamma_\plumb$ of normal form, the normal form plumbing graph for $-\PM(\Gamma_\plumb)$ is described in \cite[Theorem~7.1]{neumann1981}. 
We describe the F-normal form decorated plumbing graph of $-\PM(\Gamma_\dplumb)$ from an F-normal form decorated plumbing graph $\Gamma_\dplumb$ based on \cite[Theorem~7.1]{neumann1981}. 

Let $\Gamma_\dplumb=(\Gamma,\bm{g},\bm{e};\Ver, \AH)$ be a decorated plumbing graph of F-normal form with $\sign(y)=+1$ for each $y\in\mcY(\Gamma)$. 
Let $\bm{c}\colon \Ver(\Gamma)\to\ZZ_{\geq 0}$ be the map such that $\bm{c}(v)$ is the number of maximal chains adjoining $v$ with length $>0$, where chains which both end edges are adjoining $v$ are counted twice, for each $v\in\Ver(\Gamma)$. 
We abbreviate a sequence of $n$ $2$'s ($n\geq0$) as $n\cdot 2$ in the following theorem; 
\[ n{\cdot} 2:=\underbrace{2,\dots,2}_n. \]
The following theorem can be proved by the argument of \cite[Theorem~7.1]{neumann1981}. 

\begin{thm}[{cf. \cite[Theorem~7.1]{neumann1981}}]\label{thm:ori_rev}
	With the above notation, the F-normal form decorated plumbing graph $\Gamma_\dplumb'=(\Gamma',\bm{g}',\bm{e}',\sign'; \Ver, \AH)$ for $-\PM(\Gamma_\dplumb)$ is obtained from $\Gamma_\dplumb$ as follows;  
	\begin{enumerate}
	\item $\AH(\Gamma')=\AH(\Gamma)$, 
	\item if $v\in\Ver(\Gamma)$ which is not on a chain, then $v\in\Ver(\Gamma')$ with $\bm{g}'(v)=\bm{g}(v)$ and $\bm{e}'(v):=-\bm{e}(v)-\bm{c}(v)$; 
	\item replace each maximal chain of length $k\geq 1$ on the left below by the corresponding chain on the right; 
	\[ \small
	\begin{tikzpicture}[scale=0.9]
		\coordinate (a10) at (0,0);
		\coordinate (a11) at (0.5,0);
		\coordinate (a1) at (1,0);
		\coordinate (a2) at (2,0);
		\coordinate (a20) at (2.5,0);
		\coordinate (a21) at (3.2,0);
		\coordinate (a22) at (3.7,0);
		\coordinate (a3) at (4.2,0);
		\coordinate (a30) at (4.7,0);
		\coordinate (a31) at (5.2,0);
		\coordinate (a4) at (5.7,0);
		\coordinate (a5) at (6.7,0);
		\coordinate (d1) at (0,-1);
		\coordinate (d2) at (7.2,-1);
		
		\draw[dashed] (a10) to (a31);
		\draw (a11) to (a20);
		\draw (a22) to (a30); 
		
		\draw[fill] (a1) circle [radius=2pt] node [above] {$-b_1$};
		\draw[fill] (a2) circle [radius=2pt] node [above] {$-b_2$};
		\draw[fill] (a3) circle [radius=2pt] node [above] {$-b_k$};
		
		\draw[thick, ->] (a4) to (a5);
		
		\draw[dashed] ($(a10)+(a5)+(0.5,0)$) to ($(a31)+(a5)+(0.5,0)$);
		\draw ($(a11)+(a5)+(0.5,0)$) to ($(a20)+(a5)+(0.5,0)$);
		\draw ($(a22)+(a5)+(0.5,0)$) to ($(a30)+(a5)+(0.5,0)$); 
		
		\draw[fill] ($(a1)+(a5)+(0.5,0)$) circle [radius=2pt] node [above] {$-c_1$};
		\draw[fill] ($(a2)+(a5)+(0.5,0)$) circle [radius=2pt] node [above] {$-c_2$};
		\draw[fill] ($(a3)+(a5)+(0.5,0)$) circle [radius=2pt] node [above] {$-c_\ell$};

		\draw[dashed] ($(a10)+(d1)$) to ($(a3)+(d1)$);
		\draw ($(a11)+(d1)$) to ($(a20)+(d1)$);
		\draw ($(a22)+(d1)$) to ($(a3)+(d1)$); 
		
		\draw[fill] ($(a1)+(d1)$) circle [radius=2pt] node [above] {$-b_1$};
		\draw[fill] ($(a2)+(d1)$) circle [radius=2pt] node [above] {$-b_2$};
		\draw[fill] ($(a3)+(d1)$) circle [radius=2pt] node [above] {$-b_k$};
		
		\draw[thick, ->] ($(a4)+(d1)$) to ($(a5)+(d1)$);

		\draw[dashed] ($(a10)+(d2)$) to ($(a3)+(d2)$);
		\draw ($(a11)+(d2)$) to ($(a20)+(d2)$);
		\draw ($(a22)+(d2)$) to ($(a3)+(d2)$); 
		
		\draw[fill] ($(a1)+(d2)$) circle [radius=2pt] node [above] {$-c_1$};
		\draw[fill] ($(a2)+(d2)$) circle [radius=2pt] node [above] {$-c_2$};
		\draw[fill] ($(a3)+(d2)$) circle [radius=2pt] node [above] {$-c_\ell$};
	\end{tikzpicture}\]
	where the $c_i$ are determined as follows: if 
	\[ (b_1,\dots,b_k)=\left(n_0{\cdot} 2, \ m_1+3, \ n_1{\cdot} 2,\ m_2+3,\dots,\ m_s+3,n_s{\cdot} 2\right) \]
	with $n_i\geq 0$ and $m_i\geq0$, then
	\[ (c_1,\dots, c_\ell)=\left\{\begin{array}{ll} (n_0+1) & (s=0), \\[0.3em]
	( n_0+2, \ m_1{\cdot} 2, \ n_1+3,\dots, \ m_s{\cdot} 2,\ n_s+2) & (s>0); \end{array} \right. \]
	\item if $y\in\mcY(\Gamma)$ forms a maximal chain of length $0$, then $y\in\mcY(\Gamma')$ with the same $o(y),t(y)\in\mcV(\Gamma')$ and $\sign'(y)=-1$; 
	\item Via the obvious identification $H_1(\Gamma^\ast,\AH(\Gamma))=H_1((\Gamma')^\ast,\AH(\Gamma'))$, the homomorphism $\varepsilon_{\Gamma_\dplumb'}\colon H_1((\Gamma')^\ast,\AH(\Gamma'))\to\ZZ/2\ZZ$ 
	assigns to each relative cycle $C$ of $((\Gamma')^\ast,\AH(\Gamma'))$ the number modulo $2$ of maximal chains on this cycle. 
	\end{enumerate}
In particular, for $b_i,c_j$ in {\rm (iii)}, $(b_1,\dots,b_k)=(c_1,\dots,c_k)$ if and only if $k=1$ and $b_1=c_1=2$. 
\end{thm}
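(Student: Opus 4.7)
The plan is to adapt Neumann's orientation-reversal algorithm from the proof of \cite[Theorem~7.1]{neumann1981} to the decorated setting. At the level of bounding $4$-manifolds, the plumbing $W(\Gamma_\dplumb)$ has intersection form (on the span of the non-boundary vertices) equal to $S(\Gamma_\dplumb)$; reversing the orientation of $W(\Gamma_\dplumb)$ negates this form, which at the graph level corresponds to replacing each $\bm{e}(v)$ by $-\bm{e}(v)$ for $v\in\Ver(\Gamma)$ and flipping the sign of every edge. This produces a preliminary decorated plumbing graph $\Gamma_\dplumb^-$ satisfying $\PM(\Gamma_\dplumb^-)\cong -\PM(\Gamma_\dplumb)$ and preserving all boundary vertices and their framings, establishing (i).

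The graph $\Gamma_\dplumb^-$ is generally not in F-normal form, so one next reduces it to $\Gamma_\dplumb'$ using the F-calculus. Boundary vertices have degree $\le 1$ and hence never appear as interior vertices of a maximal chain of length $\ge 1$; they either sit alone or form endpoints of length-$0$ chains. Consequently, the chain-dualization analysis concerns only the internal combinatorics of $\Ver(\Gamma)$-vertices, to which Neumann's argument applies verbatim. For a maximal chain with original weights $(-b_1,\ldots,-b_k)$, the Hirzebruch--Jung-type reduction---an orchestrated sequence of blowings-up, blowings-down $(\mathrm{R}1^{\pm})$, and sign-rearrangements by (R0)---converts the chain into the dual chain described in (iii). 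Each such dualization also shifts the euler number of the adjacent non-chain vertex $v$ by a definite amount per chain-end, and summing over the $\bm{c}(v)$ chain-ends adjoining $v$ yields the total correction $\bm{e}'(v)=-\bm{e}(v)-\bm{c}(v)$ of (ii). Length-$0$ edges, being unaffected by interior chain operations, retain the sign $-1$ inherited from $\Gamma_\dplumb^-$, proving (iv); (v) then follows by computing the value of $\varepsilon_{\Gamma_\dplumb'}$ on each relative cycle of $((\Gamma')^\ast,\AH(\Gamma'))$, since each maximal chain (of any length) contributes a net parity of $1$ to the relevant sign count, as can be checked locally on the reduction of one chain.

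The main obstacle is verifying the exact block-level dualization formula in (iii). We proceed by induction on the block parameter $s$: the base case $s=0$ is the classical identity that a chain of $n_0$ weights $-2$ dualizes to a single weight $-(n_0+1)$, obtained by a preparatory blowing-up followed by repeated $(\mathrm{R}1^{-})$ moves. The inductive step peels off a terminal block of $2$'s together with the adjacent large weight $-(m_s+3)$, applies the corresponding local dualization, and invokes the inductive hypothesis on the shortened chain, producing the announced alternating pattern $(n_0+2,\,m_1{\cdot}2,\,n_1+3,\ldots,n_s+2)$. The final uniqueness assertion is checked by inspection of initial entries: in the $s=0$ case the dual has length $1$ while the original has length $n_0$, so $(b_1,\ldots,b_k)=(c_1,\ldots,c_\ell)$ forces $k=1$ and $b_1=c_1=2$; in the $s\ge 1$ case the first entry $c_1=n_0+2$ equals $2$ iff $n_0=0$, which then forces $b_1=m_1+3\ge 3\ne c_1$, ruling out any fixed point and completing the proof.
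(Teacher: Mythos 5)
Your proposal is correct and follows essentially the same route as the paper, which offers no independent argument but simply asserts that the theorem ``can be proved by the argument of [Neumann, Theorem~7.1]''; your reconstruction (negate the euler weights and edge signs to get a graph for $-\PM(\Gamma_\dplumb)$, then normalize via the continued-fraction duality of chains, tracking the $-\bm{c}(v)$ correction at adjacent nodes and the parity of $(-)$-edges) is precisely that argument adapted to the decorated setting. The only quibbles are cosmetic: a boundary vertex can be adjacent to a chain of positive length (not only to length-$0$ chains), and in the final uniqueness check the subcase $n_0\geq 1$, $s\geq 1$ (where $b_1=2\neq n_0+2=c_1$) should be stated explicitly alongside the $n_0=0$ subcase.
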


\subsection{Seifert manifolds}
For an orientable $3$-manifold $M$, a \textit{Seifert fibration} is a partition of $M$ into circles $S^1$, called \textit{fibers}, such that each fiber in the interior $M^\circ$ has closed neighborhood $V$ which is a union of fibers and homeomorphic to a solid torus. 
A fiber $F$ of $M$ is said to be \textit{singular} if $F$ is not homotopic to other fibers in a small closed neighborhood $V$ as above, and is said to be \textit{regular} otherwise. 
An orientable $3$-manifold is called a \textit{Seifert manifold} if it admits a Seifert fibration. 
We assume that a Seifert manifold has a boundary, and that a trivialization is fixed for each boundary component of a Seifert manifold $M$. 
The following W-graph $\Gamma_\W$ 
\begin{align}
\begin{aligned}\label{fig:seifert_Wgraph}
\begin{tikzpicture}
	\coordinate (o1) at (0,0); 
	\coordinate (b1) at (4,1);
	\coordinate (b2) at (4,0.2);
	\coordinate (b3) at (4,-0.6);
	\draw[fill] (o1) circle [radius=2pt] node [left] {$(g,r,s)$};
	\draw[fill] (b1) circle [radius=2pt];
	\draw[fill] (b2) circle [radius=2pt];
	\draw[fill] (b3) circle [radius=2pt];
	\draw (o1) -- node [left=3mm, above] {\tiny $(\alpha_1,\beta_1)$} (b1);
	\draw (o1) -- node [right=9mm,above] {\tiny $(\alpha_2,\beta_2)$} (b2);
	\draw (o1) -- node [below] {\tiny $(\alpha_m,\beta_m)$} (b3);
	\draw[-{Classical TikZ Rightarrow[length=4pt]}] (o1) -- (2,0.5);
	\draw[-{Classical TikZ Rightarrow[length=4pt]}] (o1) -- (2,0.1);
	\draw[-{Classical TikZ Rightarrow[length=4pt]}] (o1) -- (2,-0.3);
	\draw[fill] (3.4,-0.05) circle [radius=0.5pt]; 
	\draw[fill] (3.4,-0.2) circle [radius=0.5pt]; 
	\draw[fill] (3.4,-0.35) circle [radius=0.5pt]; 
\end{tikzpicture}
\end{aligned}
\end{align}
is called a \textit{Seifert manifold graph} since the above $\Gamma_\W$ describe the Seifert manifold with Seifert invariants $((g,r); -s; (\alpha_1,\beta_1),\dots,(\alpha_m,\beta_m))$, which corresponds to $(g;(1,-s),(\alpha_1,\beta_1),\dots,(\alpha_m,\beta_m))$ in \cite{neumann_raymond1978} if $r=0$ (cf. \cite[Corollary~5.7]{neumann1981} and \cite[Theorem~5.1]{neumann_raymond1978}). 
In \cite{neumann_raymond1978}, Neumann and Raymond defined the \textit{\red Euler number} $\bm{e}(M)$ of a Seifert manifold $M$ given by the W-graph (\ref{fig:seifert_Wgraph}) with $r=0$, i.e., without boundary, as follows: 
\[ \bm{e}(M):=s-\sum_{i=1}^m\frac{\beta_i}{\alpha_i}. \]
The {\red Euler} number $\bm{e}(M)$ can be naturally extend to a Seifert manifold $M$ with boundary having trivialization. 
The {\red Euler} number of $-M$ satisfies the following equation (see \cite[p.165]{neumann_raymond1978}):
\begin{align}\label{eq:Seiflt} 
\bm{e}(-M)=-\bm{e}(M). 
\end{align}
By \cite[Corollary~5.7]{neumann1981}, for a Seifert manifold $M$ with boundary corresponding to the above Seifert manifold graph, there is a \textit{star-shaped} decorated plumbing graph $\Gamma_\dplumb$ of F-normal form, which is in the following form such that $M\cong \PM(\Gamma_\dplumb)$: 
\begin{align*}\label{fig:star}
\begin{aligned}
\small \begin{tikzpicture}
	\coordinate (d1) at (1,0);
	\coordinate (d2) at (0.5,0);
	\coordinate (d3) at (0,-0.7);
	\coordinate (d4) at (0,-1);
	\coordinate (d51) at (0,0.2);
	\coordinate (d52) at (0,-0.2);
	\coordinate (a1) at (0,0);
	\coordinate (a01) at (-1,0.5);
	\coordinate (a02) at (-1,-0.5);
	\coordinate (a21) at (1.3,1);
	\coordinate (a31) at ($(a21)+(d1)$);
	\coordinate (a41) at ($(a31)+(d2)$);
	\coordinate (a51) at ($(a41)+(d1)$);
	\coordinate (a61) at ($(a51)+(d2)$);
	\coordinate (a22) at ($(a21)+(d3)$);
	\coordinate (a32) at ($(a31)+(d3)$);
	\coordinate (a42) at ($(a41)+(d3)$);
	\coordinate (a52) at ($(a51)+(d3)$);
	\coordinate (a62) at ($(a61)+(d3)$);
	\coordinate (a23) at ($(a22)+(d4)$);
	\coordinate (a33) at ($(a32)+(d4)$);
	\coordinate (a43) at ($(a42)+(d4)$);
	\coordinate (a53) at ($(a52)+(d4)$);
	\coordinate (a63) at ($(a62)+(d4)$);
	\coordinate (b1) at (-0.7,0);
	\coordinate (b2) at (0.85,-0.13);
	\coordinate (b3) at (3.35,-0.2);
	\draw[fill] (a1) circle [radius=2pt] node [above=2pt] {$e$} node [below=2pt] {$[g]$}; 
	\draw[fill] (a21) circle [radius=2pt] node [above] {$e_{1,1}$};
	\draw[fill] (a31) circle [radius=2pt] node [above] {$e_{1,2}$};
	\draw[fill] (a61) circle [radius=2pt] node [above] {$e_{1,s_1}$};
	\draw[fill] (a22) circle [radius=2pt] node [above] {$\ \ e_{2,1}$};
	\draw[fill] (a32) circle [radius=2pt] node [above] {$\ \ e_{2,2}$};
	\draw[fill] (a62) circle [radius=2pt] node [above] {$\ \ e_{2,s_2}$};
	\draw[fill] (a23) circle [radius=2pt] node [above] {$\ \ e_{m,1}$};
	\draw[fill] (a33) circle [radius=2pt] node [above] {$\ \ e_{m,2}$};
	\draw[fill] (a63) circle [radius=2pt] node [above] {$\ \ e_{m,s_m}$};
	\draw[-{Stealth[length=7pt]}] (a1) -- (a01); 
	\draw[-{Stealth[length=7pt]}] (a1) -- (a02); 
	\draw[dashed] (a41) -- (a51);
	\draw         (a1)  -- (a21) -- (a41);
	\draw         (a51) -- (a61);
	\draw[dashed] (a42) -- (a52);
	\draw         (a1)  -- (a22) -- (a42);
	\draw         (a52) -- (a62);
	\draw[dashed] (a43) -- (a53);
	\draw         (a1)  -- (a23) -- (a43);
	\draw         (a53) -- (a63);
	\draw[fill] (b1) circle [radius=0.6pt]; 
	\draw[fill] ($(b1)+(d51)$) circle [radius=0.6pt]; 
	\draw[fill] ($(b1)+(d52)$) circle [radius=0.6pt]; 
	\draw[fill] (b2) circle [radius=0.6pt]; 
	\draw[fill] ($(b2)+(d51)$) circle [radius=0.6pt]; 
	\draw[fill] ($(b2)+(d52)$) circle [radius=0.6pt]; 
	\draw[fill] (b3) circle [radius=0.6pt]; 
	\draw[fill] ($(b3)+(d51)$) circle [radius=0.6pt]; 
	\draw[fill] ($(b3)+(d52)$) circle [radius=0.6pt]; 
\end{tikzpicture}
\end{aligned}
\end{align*}
with $r$ boundary vertices, where $e=s-m$, $e_{i,j}\leq -2$ for each $i,j$ such that 
\[ \frac{\alpha_i}{\alpha_i-\beta_i}=[-e_{i,1},\dots,-e_{i,s_i}] \quad \mbox{for each $i=1,\dots,m$}. \] 
Then we have 
\[ \bm{e}(M)=e-\sum_{i=1}^m \frac{1}{[-e_{i,1},\dots,-e_{i,s_i}]}. \]
We can prove the following theorems by the same argument in the case without boundary. 

\begin{thm}[{cf. \cite[Theorem~1.1]{neumann_raymond1978} }]\label{thm:Seifert1}
	Let $M_1$ and $M_2$ be two Seifert manifolds. 
	If there is an orientation preserving homeomorphism $h\colon M_1\to M_2$ which preserves fibers, then $\bm{e}(M_1)=\bm{e}(M_2)$. 
\end{thm}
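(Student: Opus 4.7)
The plan is to reduce this bounded version to the Neumann–Raymond theorem \cite[Theorem~1.1]{neumann_raymond1978} for closed Seifert manifolds by capping off each boundary component. For each $i=1,2$, I would glue a solid torus $V_j^{(i)}\cong D^2\times S^1$ to every boundary torus $T_j^{(i)}\subset\partial M_i$ along the fixed trivialization $\tau_j^{(i)}\colon S^1\times S^1\to T_j^{(i)}$, identifying $\partial D^2\times\{*\}$ with $\tau_j^{(i)}(S^1\times\{*\})$ so that the ``section'' direction of the trivialization becomes a meridian of $V_j^{(i)}$. The Seifert fibration of $M_i$ then extends across each $V_j^{(i)}$ by the regular fibers $\{*\}\times S^1$, yielding a closed oriented Seifert manifold $\widehat{M}_i$. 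Because the added fibers are regular and the meridians come from the trivialization-section, the extended definition of $\bm{e}(M_i)$ given in the paper is set up precisely so that $\bm{e}(\widehat{M}_i)=\bm{e}(M_i)$.

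Next, I would extend $h$ to an orientation-preserving, fiber-preserving homeomorphism $\hat{h}\colon\widehat{M}_1\to\widehat{M}_2$. Since $h$ is fiber-preserving and --- as is implicit in the definition of $\bm{e}$ on bounded Seifert manifolds --- is compatible with the boundary trivializations, the restriction $h|_{T_j^{(1)}}$ sends the meridian class $\tau_j^{(1)}(\{*\}\times S^1)$ to the meridian class of the matching $V_{j'}^{(2)}\subset\widehat{M}_2$. By the uniqueness of fiber-preserving extensions of $S^1$-bundle homeomorphisms across a solid torus with prescribed regular fiber on the boundary (Lemma~\ref{lem:S1-bdl_solidtorus}), $h|_{T_j^{(1)}}$ extends to a fiber-preserving orientation-preserving homeomorphism $V_j^{(1)}\to V_{j'}^{(2)}$. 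Gluing these extensions to $h$ produces the desired $\hat{h}$. Applying \cite[Theorem~1.1]{neumann_raymond1978} to $\hat{h}$ then gives $\bm{e}(\widehat{M}_1)=\bm{e}(\widehat{M}_2)$, whence $\bm{e}(M_1)=\bm{e}(M_2)$.

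The main obstacle is verifying the trivialization-compatibility of $h$ used in the second step: an orientation-preserving fiber-preserving homeomorphism of boundary tori can in principle differ from a trivialization-preserving one by an integral Dehn twist along the fiber direction, and such a twist would change the euler number of the capped manifold by a non-trivial amount. Justifying that the framework in which $\bm{e}(M)$ is defined on a Seifert manifold with boundary forces $h$ to respect the section classes (equivalently, that only homeomorphisms with this property are admissible for the statement) is therefore the delicate point; once this is pinned down, the capping-off argument and the closed-case theorem complete the proof.
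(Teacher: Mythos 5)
The paper does not actually write out a proof of this statement: it only remarks, just above the theorem, that it ``can be proved by the same argument in the case without boundary,'' i.e.\ by redoing the Neumann--Raymond computation of the cross-section obstruction from the relative Seifert invariants. Your capping-off reduction is therefore a genuinely different (and in principle cleaner) route. The first step is sound: filling each boundary torus with a fibered solid torus whose meridian is the section curve $\tau_j^{(i)}(S^1\times\{\ast\})$ produces a closed Seifert manifold $\widehat{M}_i$ whose fibration extends that of $M_i$, and since the disc sections of the filling tori restrict on $\partial M_i$ exactly to the trivialization sections, the obstruction computation gives $\bm{e}(\widehat{M}_i)=\bm{e}(M_i)$, matching the paper's formula $\bm{e}(M)=e-\sum_i 1/[-e_{i,1},\dots,-e_{i,s_i}]$ read off the star-shaped graph.

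The difficulty you flag in your last paragraph, however, is not a point that can be ``pinned down'' from the stated hypotheses --- it is a genuine gap, and in fact the theorem as literally written fails without an extra assumption. Take $M_1=M_2=D^2\times S^1$ with the product fibration, but equip $M_2$ with the boundary trivialization obtained from the standard one on $M_1$ by a Dehn twist in the fibre direction. The identity map is an orientation-preserving, fibre-preserving homeomorphism, yet $\bm{e}(M_1)=0$ while $\bm{e}(M_2)=\pm1$, because $\bm{e}$ of a bounded Seifert manifold depends on the chosen trivializations. Equivalently, in your notation $h_\ast$ may send the section class to $\pm(\text{section})+\beta\cdot(\text{fibre})$ with $\beta\neq 0$, and then the capped manifolds differ in euler number by $\sum_j\beta_j$. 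So the compatibility of $h$ with the section classes must either be added as a hypothesis or built into what ``homeomorphism of Seifert manifolds with trivialized boundary'' means; it cannot be deduced. In the one place the paper applies this theorem (the proof of Lemma~\ref{lem:orientation_gen_sing}), that compatibility is available, since the boundary maps there are shown to be given by matrices $H_j=\left(\begin{smallmatrix}\alpha_j&0\\ \beta_j&\pm\alpha_j\end{smallmatrix}\right)$ with $\beta_j=0$. Once that input is granted, your argument does close: $h|_{T_j^{(1)}}$ carries meridian to meridian of the filling tori, extends fibre-preservingly across them (the standard extension, in the spirit of Lemma~\ref{lem:S1-bdl_solidtorus}), and the closed-case theorem of Neumann--Raymond finishes the proof.
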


\begin{thm}[{cf. \cite[Theorem~5.2]{neumann_raymond1978}}]\label{thm:Seifelt2}
	Let $\Gamma_\dplumb$ be a star-shaped decorated plumbing graph, and let $M$ be a Seifert manifold corresponding to $\Gamma_\dplumb$. 
	If the intersection form of $\Gamma_\dplumb$ is positive definite (resp. negative definite), then $\bm{e}(M)>0$ (resp. $\bm{e}(M)<0$). 
\end{thm}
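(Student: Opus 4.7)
The plan is to apply a Schur-complement reduction about the central vertex $v_0$ of the star-shaped graph, exploiting that every arm weight satisfies $e_{i,j}\leq -2$.

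Order the vertices of $\Ver(\Gamma)$ so that the arm vertices precede $v_0$. Because $\Gamma$ has no loops and each arm is a linear chain, the intersection form $S:=S(\Gamma_\dplumb)$ of Definition~\ref{def:intersection_form} has the block form
\[
S = \begin{pmatrix} A & \bm{b} \\ \bm{b}^{T} & e \end{pmatrix}, \qquad A = \bigoplus_{i=1}^{m} A_i,
\]
where $A_i$ is the $s_i\times s_i$ tridiagonal matrix with diagonal $(e_{i,1},\dots,e_{i,s_i})$ and $1$'s off the diagonal, and each component of $\bm{b}$ is $(1,0,\dots,0)^{T}\in\ZZ^{s_i}$. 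The boundary vertices in $\AH(\Gamma)$ do not appear in $S$ at all.

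In the positive-definite case, any non-empty arm contains a principal $1\times 1$ minor equal to $e_{i,1}\leq -2<0$, so $S$ cannot be positive definite unless every arm is empty. Then $S=(e)$, the formula $\bm{e}(M)=e-\sum_i 1/[-e_{i,1},\dots,-e_{i,s_i}]$ collapses to $\bm{e}(M)=e$, and positive definiteness forces $e>0$, giving $\bm{e}(M)>0$.

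In the negative-definite case, each $A_i$ is itself negative definite by the classical fact that tridiagonal matrices with diagonal entries $\leq -2$ and off-diagonal entries $1$ are negative definite (the leading principal minors $\det A_i^{[k]}$ satisfy the recursion $\det A_i^{[k]} = e_{i,k}\det A_i^{[k-1]} - \det A_i^{[k-2]}$ and alternate in sign). Thus $A$ is negative definite, and by the Schur-complement criterion $S$ is negative definite iff the scalar Schur complement
\[
e - \bm{b}^{T} A^{-1} \bm{b} \;=\; e - \sum_{i=1}^{m}(A_i^{-1})_{11}
\]
is negative. A direct cofactor calculation together with the classical continued-fraction identity for tridiagonal matrices gives
\[
(A_i^{-1})_{11} = -\gamma_i, \qquad \gamma_i := \frac{1}{[-e_{i,1},\dots,-e_{i,s_i}]} > 0.
\]
Hence the condition reads $e + \sum_i\gamma_i < 0$, which forces $e < -\sum_i\gamma_i \leq 0$. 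Substituting into the formula $\bm{e}(M) = e - \sum_i\gamma_i$ recalled just before the theorem,
\[
\bm{e}(M) = e - \sum_i\gamma_i < -2\sum_i\gamma_i \leq 0,
\]
so $\bm{e}(M)<0$, as required.

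The main technical obstacle is the sign bookkeeping in the identity $(A_i^{-1})_{11}=-\gamma_i$: since the arm weights $e_{i,j}$ are negative while $\gamma_i$ is defined through the positive continued fraction $[-e_{i,1},\dots,-e_{i,s_i}]$, one must carefully track the parities $(-1)^{s_i}$ and $(-1)^{s_i-1}$ produced respectively by $\det A_i$ and by its $(1,1)$-cofactor $\det A_i^{(2)}$ obtained by deleting the first row and column. Apart from this, the argument is an almost verbatim adaptation, allowing boundary vertices, of the closed case \cite[Theorem~5.2]{neumann_raymond1978}; the boundary vertices $\AH(\Gamma)$ are inert precisely because they do not enter the intersection form.
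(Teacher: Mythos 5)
Your proof is correct, and it is essentially the argument the paper relies on: the paper offers no proof beyond the remark that the closed case of \cite[Theorem~5.2]{neumann_raymond1978} carries over, and that argument is precisely the Schur-complement (determinant--cofactor) computation you perform, with the boundary vertices playing no role because $\AH(\Gamma)$ does not enter $S(\Gamma_\dplumb)$ by Definition~\ref{def:intersection_form}. One remark on your ``sign bookkeeping'': your identity $(A_i^{-1})_{11}=-\gamma_i$ shows the Schur complement equals $e+\sum_i\gamma_i$, and by the paper's own relations $e=s-m$, $\gamma_i=(\alpha_i-\beta_i)/\alpha_i=1-\beta_i/\alpha_i$ and $\bm{e}(M)=s-\sum_i\beta_i/\alpha_i$, this quantity is exactly $\bm{e}(M)$ (check it on the $E_8$ graph, where $e+\sum_i\gamma_i=-1/30$ while $e-\sum_i\gamma_i=-119/30$); so the displayed formula $\bm{e}(M)=e-\sum_i 1/[-e_{i,1},\dots,-e_{i,s_i}]$ just before the theorem carries a sign typo, and your detour through the estimate $\bm{e}(M)<-2\sum_i\gamma_i\leq 0$ is only needed to accommodate it. Since the conclusion follows under either reading of the formula, the proof stands as written.
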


\bibliographystyle{plain}
\bibliography{biblio_3mfd.bib}

\end{document}